\newtheorem{teo}{Theorem}[section]
\newtheorem*{teo*}{Theorem}
\newtheorem{lem}[teo]{Lemma}
\newtheorem{cor}[teo]{Corollary}
\newtheorem{prop}[teo]{Proposition}
\theoremstyle{definition}
\newtheorem{fed}[teo]{Definition}
\theoremstyle{remark}
\newtheorem{rem}[teo]{Remark}
\newtheorem{nota}[teo]{Notation}
\newtheorem{exa}[teo]{Example}
\def\Z{\mathbb{Z}}
\def\R{\mathbb{R}}
\def\C{\mathbb{C}}
\def\cA{\mathcal{A}}
\def\cB{\mathrm{B}}
\def\cD{\mathcal{D}}
\def\cE{\mathcal{E}}
\def\cF{\mathfrak{F}}
\def\cG{\mathcal{G}}
\def\cH{\mathcal{H}}
\def\cI{\mathcal{I}}
\def\cO{\mathcal{O}}
\def\cM{\mathcal{M}}
\def\cU{\mathcal{U}}
\def\cV{\mathcal{V}}
\def\cW{\mathcal{W}}
\def\cX{\mathcal{X}}
\def\cY{\mathcal{Y}}
\def\cZ{\mathcal{Z}}
\def\Z{\mathcal{Z}}
\def\fu{\mathfrak{u}}
\def\fg{\mathfrak{g}}
\def\fH{\mathfrak{H}}
\def\fK{\mathfrak{k}}
\def\fo{\mathfrak{o}}
\def\Ad{\mathrm{Ad}}
\def\ad{\mathrm{ad}}
\def\GL{\mathrm{GL}}
\def\rG{\mathrm{G}}
\def\O{\mathrm{O}}
\def\U{\mathrm{U}}
\def\UB{\mathrm{U}_{\mathrm{Bog}}}
\def\UI{\mathrm{U}_{\mathrm{Imp}}}
\def\DF{\mathcal{D}}
\def\ui{\mathfrak{u}_{\mathrm{Imp}}}
\def\fZ{\mathfrak{z}}
\def\ubg{\mathfrak{u}^{\Gamma}_{\mathrm{Bog}}}
\def\fN{\mathfrak{n}}
\def\fP{\mathfrak{p}}
\def\fG{\mathfrak{g}}
\def\d1{\mathbf{1}}
\def\dU{\mathds{U}}
\def\o{\omega}
\def\Z{\cZ_{\mathrm{qf}}}
\def\dN{\mathds{N}}
\def\dH{\mathds{H}}
\def\diag{\mbox{diag}}
\def\Ad{\mbox{Ad}}
\def\ad{\mbox{ad}}
\def\ker{\mbox{ker}}
\def\ind{\mathrm{ind}}
\def\nL{\mathrm{\mathbf{L}}}
\def\tde{\tilde{\delta}_\Gamma}
\def\noi{\noindent}
\def\bdem{\begin{proof}}
\def\edem{\renewcommand{\qed}{\hfill $\blacksquare$}
\end{proof}}
\DeclareMathOperator\ub{\mathfrak{u}_{\mathrm{Bog}}}
\DeclareMathOperator{\Tr}{Tr}
\DeclareMathOperator{\ran}{ran}
\DeclareMathOperator{\res}{res}
\newcommand{\PI}[2]{\left\langle #1 , #2 \right\rangle}
\title{{\Large Homogeneous spaces in  Hartree-Fock-Bogoliubov theory}}
\author{Claudia D. Alvarado, Eduardo Chiumiento}
\date{}
\begin{document}

\maketitle

\begin{abstract}
We study   the   action of Bogoliubov transformations on admissible generalized one-particle density matrices  arising in  Hartree-Fock-Bogoliubov theory.  We show that  the  orbits of this action are reductive homogeneous spaces, and we give several equivalences that characterize when they are embedded submanifolds of natural ambient spaces. We use Lie theoretic arguments to prove that these orbits admit  an invariant symplectic form. If, in addition, the operators in the orbits have finite spectrum, or infinite spectrum and trivial kernel, then we obtain that  the orbits are actually    K\"ahler homogeneous spaces.
\end{abstract}


\tableofcontents

\medskip

\noi {\it 2020 MSC.}   Primary 22E65  - Secondary 53C15, 53Z05, 81V55


\medskip

\noi \textit{Keywords.} Generalized one-particle density matrix; Bogoliubov transformation; homogeneous space; embedded submanifold; invariant symplectic form; K\"ahler homogeneous space.


\section{Introduction}

 Hartree-Fock-Bogoliubov (HFB) theory, also known as generalized Hartree-Fock theory, is a relevant tool for understanding fermionic  many-body  quantum systems.  It is a generalization of the traditional Hartree-Fock theory  for systems in which the particle number is not assumed to be conserved. Although it has been studied earlier  in the physics literature \cite{BCS57, B59, V61}, pioneering work on rigorous mathematical aspects of HFB theory was done by Bach, Lieb and Solovej \cite{BLS94}.
In the present paper we investigate the main objects in HFB theory from the point of view of 
infinite-dimensional geometry by using Lie groups  modeled on Banach spaces and  its homogeneous spaces. 

\medskip

We now briefly explain the motivation and framework related to HFB theory (see Appendix \ref{HFB} for precise definitions and details). Let $\fH$ be the one-particle Hilbert space, and let $\cF=\cF[\fH]$ be its associated fermionic Fock space.
 Let $\cZ$ be the set of all normal states on $\cF$. 
%
Consider a Hamiltonian $\dH$ bounded from below on $\cF$ describing the dynamics of a   fermionic system.     It is of physical importance to compute the following 
$$
E_{gs}:=\inf \{ \omega(\dH)  :   \omega \in \cZ \}. 
$$
This is known as the total ground state energy in the grand canonical ensemble.  Approximation methods become relevant for such task, which turns out to be impossible in most cases.   In HFB theory the total ground state energy is approximated by taking as trial states the \textit{quasi-free states with finite particle number}, which is a class of states that obeys Wick's theorem. 
Let us denote by $\Z \subseteq \cZ$ the set of all   quasi-free states with finite particle number on $\cF$.
The  HFB energy is then defined by
\begin{equation*}
E_{HFB}:=\inf\{ \omega(\dH) :  \omega \in \Z \}.
\end{equation*}
Clearly, we have the upper bound $E_{gs} \leq E_{HFB}$. Now a key point, which is convenient for the calculus of variations, is that one can  rewrite the HFB in terms of an infimum over a convex set of operators acting on $\fH \oplus \fH$. Denote by $ \cB_1(\fH)$ and $\cB_2(\fH)$ the trace-class  and Hilbert-Schmidt operators on $\fH$, respectively. The set $\Z$ is indeed in a bijective correspondence with the set of all \textit{admissible generalized one-particle density matrices (g1-pdm)} 
 \begin{equation*}
\cD:=\left\{  \begin{pmatrix} \gamma & \alpha   \\   \alpha^*   & \d1 - \bar{\gamma}  \end{pmatrix} \in \cB(\fH \oplus \fH) :   0 \leq \begin{pmatrix} \gamma & \alpha   \\   \alpha^*   & \d1 - \bar{\gamma}  \end{pmatrix} \leq \d1_{\fH\oplus \fH}, \, \gamma=\gamma^* \in  \cB_1(\fH), \, \alpha^T=-\alpha   \right\},
 \end{equation*}
where we write  $\bar{\gamma}=I_0 \alpha I_0$, $\bar{\alpha}=I_0 \alpha I_0$, $\alpha^T=\bar{\alpha}^*$, and $I_0$ is a fixed conjugate-linear isometry on $\fH$. 
Thus, each  $\Gamma=\Gamma[\gamma, \alpha] \in \cD$,  is described by an operator $\gamma \in \cB_1(\fH)$, $0 \leq \gamma \leq \d1$, called the \textit{one-particle density matrix}, and another operator  $\alpha$, called the \textit{pairing matrix}. We observe that one can check that $\alpha \in \cB_2(\fH)$ by elementary computations. Suppose that $\omega \mapsto \Gamma_\omega$ denotes the correspondence between $\Z$ and $\cD$, and define the HFB functional by
$\cE(\Gamma_\omega):=\omega(\dH)$, 
then the HFB energy can be rewritten in terms of admissible g1-pdms as
\begin{equation}\label{HFB min}
E_{HFB}=\inf\{  \cE(\Gamma) : \Gamma \in \cD \}.
\end{equation}
Let $\U(\fH \oplus \fH)$ be the unitary group acting on $\fH \oplus \fH$. An useful  fact in HFB theory is that  the so-called \textit{Bogoliubov transformations satisfying the Shale-Stinespring condition} given by
$$
\UB  :=\left\{ \begin{pmatrix}  u  &   v   \\   \bar{v} &  \bar{u}  \end{pmatrix} \in \U(\fH \oplus \fH):  v \in \cB_2(\fH)  \right\}
$$
defines an action by conjugation 
$$\UB \times \cD \to \cD,  \, \, \, U \cdot \Gamma=U\Gamma U^*, \, \, \,  U \in \UB,  \, \Gamma \in \cD.
$$
The bijection between quasi-free states and g1-pdms turns out to be an equivariant map, where in the set of quasi-free states one has the action of the subgroup of the unitary group on $\cF$  known as the unitary implementers. 



 
Appart from the previously mentioned work \cite{BLS94}, other    mathematical results in HFB theory include the following ones. In treating the existence of minimizers in \eqref{HFB min} a serious difficulty one has to face is the lack of weak lower semicontinuity of the HFB functional.  Remarkably,   the existence of minimizers was established by  Lenzmann and Lewin \cite{LL10} for  pseudorelativistic fermions interacting with Newtonian gravitational forces. On the other hand,  the  occurrence of pairing  is a phenomena to be understood within HFB theory. This amounts to know  when there exist minimizers of the form $\Gamma=\Gamma[\gamma, \alpha]$, $\alpha \neq 0$. Bach, Fr\"ohlich and  Jonsson  \cite{VFJ09} developed a simplification of HFB theory under certain abstract assumptions, to find that the pairing matrix can be expressed in terms of the one-particle matrix.  
In \cite{BBKM14}  a generalized Lieb's variational principle was proved, which  roughly stated, means that in \eqref{HFB min} one can only consider the infimum over those admissible g1-pdms that are projections ($\Gamma^2=\Gamma$). Finally, we refer to \cite{LS14} for a numerical approach to the pairing and other problems in HFB theory,    and the recent survey \cite{B22} for  the  aspects mentioned in this paragraph  and others in relation to HFB  theory. 

\medskip

Throughout this paper, $\fH$ is assumed to be an infinite-dimensional Hilbert space. 
We study the geometric structure of  orbits given by the conjugacy action of  Bogoliubov transformations on admissible g1-pdms: for $\Gamma \in \cD$,
\begin{equation*}
\cO(\Gamma) :=\{ U\Gamma U^* : U \in \UB  \}.
\end{equation*}
We will see that these orbits are concrete examples of infinite-dimensional homogeneous spaces 
related to quantum mechanics. Geometric structures such  as symplectic forms, complex or K\"ahler homogeneous structures that are pervasive in classical mechanics will be constructed for orbits of g1-pdms by using Lie algebraic arguments. Such kind of arguments are based on general results in the existing literature of infinite-dimensional homogeneous spaces (see for instance   \cite{B06, N04, OR03, Up85}).
 Let us point out that the geometry of homogeneous spaces in the traditional Hartree-Fock was considered in \cite{CM12}; meanwhile in  the physics literature,  the above  orbits of g1-pdms were investigated  in \cite{R81} under the  assumption that they can be modeled as finite-dimensional manifolds.   

In Section \ref{Differential} we  study the reductive homogeneous space structure of orbits of admissible g1-pdms and quasi-free states with finite particle number. We first recall some useful results of the Lie group structure of the Bogoliubov transformations that are scattered in the literature on both Lie groups and HFB theory. In fact,  $\UB$ belongs to the class of restricted Lie groups studied by Neeb \cite{N},  and its Lie algebra is identified with
$$
\ub:= \left\{ \, \begin{pmatrix} x_1  &  x_2  \\   \bar{x}_2  &  \bar{x}_1   \end{pmatrix} \in \cB(\fH\oplus \fH) :  x_1=-x_1^*, \, x_2=-x_2^T \in  \cB_2(\fH)  \right\},
$$
which has the following restricted norm 
$$
\left\|\begin{pmatrix} x_1  &  x_2  \\   \bar{x}_2  &  \bar{x}_1   \end{pmatrix} \right\|_{\res}:=2 \max\{  \|  x_1\| , \| x_2 \|_2 \}.
$$
We show that the orbits  $\cO(\Gamma)\simeq \UB / \UB^\Gamma$ can be endowed with the structure of  reductive homogeneous space, where $ \UB^\Gamma$ is the isotropy group at $\Gamma \in \cD$. We also prove that $\cO(\Gamma)$ is connected if and only if $\frac{1}{2}$ is not an eigenvalue of $\Gamma$. Otherwise, $\cO(\Gamma)$ has two connected components (see Theorem \ref{g1pdm smooth homog spaces}). To construct the infinite-dimensional homogeneous space structure one has  to find an accessible expression of the isotropy group $\UB^\Gamma$  to show that its Lie algebra is a complemented subspace of   $\ub$. This can be done thanks to a well-known diagonalization result for g1-pdms by Bogoliubov transformations \cite{BLS94}, which implies that each orbit has a diagonal g1-pdm. As a direct  consequence of the mentioned correspondence between g1-pdms and quasi-free states, we obtain that orbits of quasi-free states with finite particle number are reductive homogeneous spaces. We observe that the projection $P_-$ on $\fH \oplus \fH$ defined by $P_-(f,g)=(0,g)$ is a g1-pdm, and the orbit $\cO(P_-)$ is actually  the isotropic restricted Grassmannian related to loop groups (see \cite{PS}).

In Section \ref{submanifolds} we give several equivalent characterizations to guarantee that orbits of g1-pdms are  embedded submanifolds of natural affine spaces. Besides the extrinsic geometric structure of the orbits as homogeneous spaces, which are in this way equipped with the quotient topology, it is desirable to give 
a more intrinsic type of manifold structure on  them. The following inclusion is not difficult to check
$$
\cO(\Gamma) \subseteq i \ub  + \begin{pmatrix}   0   &   0   \\ 0  &   \d1  \end{pmatrix}:=i \ub + P_- .
$$ 
This gives a relative topology on g1-pdms inherited from $i \ub + P_-$, whose  restricted metric is defined by $d(\Gamma_0, \Gamma_1)=\| \Gamma_0 - \Gamma_1\|_{\res}$. We prove that the following conditions are equivalent: tangent spaces of $\cO(\Gamma)$ are closed in $i \ub$,  the quotient topology and the relative  topology  coincide in $\cO(\Gamma)$ , and $\Gamma$ has finite spectrum. Furthermore, these are also equivalent to being $\cO(\Gamma)$  an embedded submanifold of $i \ub + P_-$ (Theorem \ref{submanifold}).  In the process, we discuss  derivations induced by g1-pdms defined as follows: for each $\Gamma \in \cD$, set  $\delta_\Gamma:\ub \to \ub$, $\delta_\Gamma(X)=[i\Gamma, X]$,  for all $X \in \ub$. These, in turn, are useful in the construction of continuous local cross sections for the action that allow us to compare the two mentioned topologies.
This circle of ideas has recently been studied  for unitary orbits in operator ideals by Belti\c t$\breve{\text{a}}$ and Larotonda (see \cite{BL23} and the references therein). Also our results on the topology of g1-pdms  might be considered as  a natural extension of similar ones related to one-particle density matrices (see \cite{Bo04,GKMS18}).  
In contrast to the aforementioned works that deal with unitary invariant metrics, notice that  the action of Bogoliubov transformations is not invariant for the previous restricted metric. 

In  Section \ref{symplectic complex kahler} we construct an invariant symplectic  form on the orbits. We introduce for each $\Gamma \in \cD$ the continuous 2-cocycle $s_\Gamma:\ub \times \ub \to \R$ defined by
$$
s_\Gamma(X,Y):=   \Tr(X[i\Gamma,Y]).
$$ 
Then a standard construction can be applied to obtain a weakly symplectic homogeneous space $(\cO(\Gamma), \omega)$, where $\omega=\Sigma(s_\Gamma)$ is the symplectic form induced by $s_\Gamma$. In the case in which $\Gamma$ has finite spectrum,  we  show that $\omega$ is a strong symplectic form. 
Furthermore, we prove in Theorem \ref{kahler structures} that $(\cO(\Gamma), \omega)$ is a K\"ahler homogeneous space in the following two cases: when the spectrum of $\Gamma$ is finite, or when 
the spectrum of $\Gamma$ is infinite and $\ker(\Gamma)=\{ 0 \}$. This is shown by using the notion of K\"ahler polarizations. In this regard, we refer to  \cite{B06, N04} for general results  in the infinite-dimensional setting, and to \cite{B05a, BRT05, N04} for interesting examples. 

 We include an appendix  to make the exposition more self-contained.  It contains notation and necessary results on HFB theory, and it treats the needed geometric structures in the setting of Banach manifolds.

\section{Reductive homogeneous spaces}\label{Differential}



\subsection{Lie group structure of Bogoliubov transformations}

We assume that $\fH$ is an infinite-dimensional separable complex Hilbert space, 
 and we consider the group of  Bogoliubov transformations satisfying the Shale-Stinespring condition, namely 
\begin{align*}
\UB  :=\left\{ \begin{pmatrix}  u  &   v   \\   \bar{v} &  \bar{u}  \end{pmatrix} \in \U(\fH \oplus \fH):  v \in \cB_2(\fH)  \right\}.
\end{align*} 
The conjugate-linear isometry $I_0$ that defines $\bar{x}=I_0xI_0$ for every $x \in \cB(\fH)$ is associated to a fixed orthonormal basis $\{ \varphi_k \}_{k\geq 1}$ of $\fH$. This means that $I_0$ is defined by $I_0(\sum_{k \geq 1} \mu_k \varphi_k)=\sum_{k \geq 1}\bar{\mu}_k \varphi_k$, for every sequence $\{ \mu_k \}_{k \geq 1} \in \ell^2(\C)$. 

We begin by recalling several topological and geometric properties of the group $\UB$. Many of them are indeed well-known in the Lie groups literature; our presentation intend  to relate them with the Hartree-Fock-Bogoliubov framework in the most convenient way to our purposes. Among other aspects we observe the relation of Bogoliubov transformations with an orthogonal group that appears in the Clifford algebra formulation. Also it turns out that $\UB$ fits into the class of the so-called \textit{restricted Lie groups}. Our main reference is the work  of Neeb  (\cite{N}), where the reader can find a detailed account about these groups and its Lie algebras.

\begin{rem}\label{rem Banach alg-restricted orthogonal-isomorphism} 
We recall a Banach algebra and two Lie groups  related to $\UB$.

\medskip

\noi $i)$ \textit{The restricted algebra.} 
Let $P_{\pm}$ be the orthogonal projections on $\fH \oplus \fH$ defined by $P_+(f,g)=(f,0)$ and $P_-(f,g)=(0,g)$, for all $f,g \in \fH$. We set
$$
D:=P_+ - P_-=\begin{pmatrix} \d1  &   0 \\  0   & -\d1  \end{pmatrix}.
$$
Consider the following \textit{restricted algebra associated with $D$ and the Hilbert-Schmidt operators}:
\begin{align*}\label{rest alg}
\cB_2(\fH \oplus \fH, D) & :=\{  X \in \cB(\fH \oplus \fH) :  [X,D] \in \cB_2(\fH \oplus \fH)  \} \\
&  = \left\{   \begin{pmatrix}  x_{11}   &  x_{12} \\  x_{21}  &  x_{22}   \end{pmatrix} \in \cB(\fH \oplus \fH) :  x_{12}, \, x_{21} \in \cB_2(\fH)         \right\}.
\end{align*}
This is a complex Banach $*$-algebra equipped with the norm 
\begin{equation}\label{res norm}
\| X  \|_{\res}:= 2 \max \{ \|x_{11}\|, \, \| x_{22}\|, \, \|x_{12}\|_2 , \, \| x_{21}\|_2 \},
\end{equation}
where $\| \, \cdot \,\|$ and $\| \, \cdot \,\|_2$ are the operator norm and the Hilbert-Schmidt norm, respectively. 
We refer to this norm as the \textit{restricted norm}. 
The factor $2$ is added to get the submutiplicative property of this norm, which can be checked by elementary estimates with the operator and 
Hilbert-Schmidt norms. Notice that $\UB \subseteq \cB_2(\fH \oplus \fH, D)$. We remark that multiplication by unitaries in $\UB$ is not isometric for the restricted norm, i.e. in general $\| UX\|_{\res} \neq  \| X   \|_{\res}$ and $\| XU\|_{\res} \neq  \| X   \|_{\res}$,
for $U \in \UB$ and $X \in \cB_2(\fH \oplus \fH, D)$.

\medskip

\noi $ii)$ \textit{A restricted general linear group.}  
Recalling that $I_0$ is the  conjugation associated to the fixed orthonormal basis $\{  \varphi_k\}_{k \geq 1}$, we put 
\begin{equation}\label{conjugation I}
I:=\begin{pmatrix}   0  &  I_0   \\  I_0   &  0  \end{pmatrix}.
\end{equation}
Clearly, $I$ is a conjugation acting on $\fH\oplus \fH$. Also note that $I^*=I$. As $I$ is conjugate-linear this means that $\PI{(f_1,f_2)}{I (g_1,g_2)}=\PI{(g_1,g_2)}{I(f_1,f_2)}$, for all $(f_1,f_2), (g_1 , g_2) \in  \fH \oplus \fH$. 
Let $\GL(\fH\oplus \fH)$ denotes the group of linear invertible operators on $\fH \oplus \fH$. As a particular case of \cite[Def. III.3]{N} obtained by taking the aforementioned conjugation $I$ and self-adjoint operator $D$, we consider the following \textit{restricted general linear group}
\begin{align*}
\GL_2(\fH \oplus \fH,I,D) & :=\{ G \in \GL(\fH \oplus \fH) :   G^{-1}=IG^*I, \, [G,D] \in \cB_2(\fH \oplus \fH)  \} \\
& =\left\{  \begin{pmatrix}  g_{11}   &  g_{12} \\  g_{21}  &  g_{22}   \end{pmatrix} \in \GL(\fH \oplus \fH) :  \begin{pmatrix}  g_{11}   &  g_{12} \\  g_{21}  &  g_{22}   \end{pmatrix}^{-1}=\begin{pmatrix}  g_{22}^T   &   g_{12}^T  \\   g_{21}^T   &  g_{11}^T \end{pmatrix}, \, g_{12}, \, g_{21}  \in \cB_2(\fH)      \right\}
\end{align*}
This group carries a Lie group structure endowed with the restricted norm. 
As a direct consequence of the matrix description of $\GL_2(\fH \oplus \fH,I,D)$, notice that 
\begin{align}
\UB & =  \{ \, U \in \mathrm{U}(\fH \oplus \fH)  :     U= I U I  ,    \, [U,D] \in \cB_2(\fH \oplus \fH)    \} \nonumber \\
& =\U(\fH\oplus \fH) \cap \GL_2(\fH \oplus \fH,I,D).  \label{ubog and g}
\end{align}
The Lie algebra of $\GL_2(\fH \oplus \fH,I,D)$ has the usual commutator of operators as Lie bracket, and it is given by
\begin{align}
\mathfrak{gl}_2(\fH \oplus \fH,I,D) & :=\{ X \in \cB(\fH \oplus \fH) :   X=-IX^*I, \, [X,D] \in \cB_2(\fH \oplus \fH)  \}  \nonumber  \\
& = \left\{  \begin{pmatrix} x_{11}  & x_{12}   \\   x_{21}   &   -x_{11}^T    \end{pmatrix} \in \cB(\fH \oplus \fH) :   x_{12}=-x_{12}^T \in \cB_2(\fH), x_{21}=-x_{21}^T \in \cB_2(\fH) \right\}. \label{complexif Lie alg}
\end{align}
 Finally, we note that $\mathfrak{gl}_2(\fH \oplus \fH,I,D)$ is an involutive Lie algebra. This means that is a complex Lie algebra endowed with an involution $*$ such that $(X^*)^*=X$ and $[X,Y]^*=[Y^*,X^*]$. The involution is given by the usual  operator adjoint. In particular,
the involution determines a real form of $\fg$, which is  the real subalgebra defined by 
$$
\mathfrak{gl}_2(\fH \oplus \fH,I,D)_\R:=\{ X \in \mathfrak{gl}_2(\fH \oplus \fH,I,D) : X^*=-X\}.
$$  
The Lie algebra  $\mathfrak{gl}_2(\fH \oplus \fH,I,D)$ will be used  in Section  \ref{symplectic complex kahler} for  the construction of K\"ahler structures on orbits of g1-pdms.

\medskip

\noi $iii)$ \textit{The restricted orthogonal group.}
 We write $\fH^\R$ for the underlying real Hilbert space, and let $J_0:\fH^\R \to \fH^\R$, $J_0 f=i f$, be its complex  structure.  Let $\GL(\fH^\R)$ be the group of real linear invertible operators on $\fH^\R$.  The \textit{restricted orthogonal group} is defined by
$$
\O_{\mathrm{res}}(\fH^\R):=\{ \, O \in \GL(\fH^\R) \, : \,  O^\tau=O^{-1}, \,  [O,J_0] \in \cB_2(\fH^\R)         \,   \}.
$$
Here we write $O^\tau$ for the transpose of $O$ relative to the inner product $\Re  \PI{\, }{\, }$ of $\fH^\R$. Several properties of this group were studied in \cite{CO, N, Ott, PS, V}. In particular, $\O_{\mathrm{res}}(\fH^\R)$ is a Lie group having two connected components. Its topology is defined by the norm $\| O\|_{\mathrm{res}}:=\max\{ \|O\|, \,  \|[J_0, O]\|_2 \}$,  $O \in \O_{\mathrm{res}}(\fH^\R)$, and its Lie algebra is given by
$$
\mathfrak{o}_{\mathrm{res}}(\fH^\R):=\{ \, A \in \cB(\fH^\R) \, : \,  A^\tau=-A, \,  [A,J_0] \in \cB_2(\fH^\R)         \,   \}.
$$

Given an operator $A \in \cB(\fH^\R)$, we may write $A=A_c+ A_a$, where $A_a:=\frac{1}{2}(A+J_0AJ_0)$ and $A_c:=\frac{1}{2}(A-J_0AJ_0)$ are called the \textit{antilinear part} (or \textit{conjugate-linear part}) and \textit{linear part} of $A$, respectively. Observe that the maps $A\mapsto A_a$ and $A\mapsto A_c$ are projections, $A_aJ_0=-J_0A_a$ and $A_cJ_0=J_0A_c$. For $A \in \cB(\fH^\R)$ such that $A^\tau=-A$, note that $A \in \mathfrak{o}_{\mathrm{res}}(\fH^\R)$ if and only $A_a \in \cB_2(\fH^\R)$.
\end{rem}

Let $\mathfrak{u}(\fH \oplus \fH)$ denote the Lie algebra of $\U(\fH \oplus \fH)$ consisting of all skew-adjoint operators. 
In the next proposition we collect several facts on the group $\UB$, including its relation with the above restricted general linear and orthogonal groups.   We  abbreviate  $\rG:=\GL_2(\fH \oplus \fH,I,D)$ and $\fg:=\mathfrak{gl}_2(\fH \oplus \fH,I,D)$.







\begin{prop}\label{elementary properties}
$\UB$ is a real Lie  subgroup of $\rG$, 
whose Lie algebra is given by
\begin{align*}
\mathfrak{u}_{\mathrm{Bog}} & := \{ \, X \in \mathfrak{u}(\fH \oplus \fH)  :     X= I X I  ,    \, [X,D] \in \cB_2(\fH \oplus \fH)    \}   \\
 & = \left\{ \, \begin{pmatrix} x_1  &  x_2  \\   \bar{x}_2  &  \bar{x}_1   \end{pmatrix} \in \cB(\fH\oplus \fH) :  x_1=-x_1^*, \, x_2=-x_2^T \in  \cB_2(\fH)  \right\}=\fg_\R. 
\end{align*}
In particular, the  topology of $\UB$ is defined by the norm 
\begin{equation}\label{res norm ubog}
\left\|  \begin{pmatrix}  u  &  v   \\  \bar{v}  &  \bar{u}  \end{pmatrix} \right\|_{\res}= 2 \max\{ \|u\| , \, \|v\|_2 \}. 
\end{equation}
Furthermore, the following assertions hold: 
\begin{enumerate} 
\item[i)] If $U=\begin{pmatrix}  u  &  v   \\  \bar{v}  &  \bar{u}  \end{pmatrix} \in \UB$, then $u$ and $\bar{u}$ are Fredholm operators of index zero. 
\item[ii)] The map $\Xi: \O_{\mathrm{res}}(\fH^\R) \to \UB$ defined by
\begin{equation}\label{iso matrix op}
\Xi (O)=   \begin{pmatrix}  u  & v \\ \bar{v}  &  \bar{u} \end{pmatrix}   \text{ if and only if } O=u + v I_0 ,  \, u=O_c  , \,  v I_0=O_a ,
\end{equation}
is an isomorphism of Lie groups. 
\item[iii)] Let   $i \fg_\R:=\{ X \in \fg : X=X^*  \}$ be the self-adjoint part of $\fg$. The polar decomposition given by 
$$
\UB \times i \fg_\R \to \rG, \, \, \, \, (U,X) \mapsto Ue^X,
$$
is a diffeomorphism and the inclusion $\UB \hookrightarrow \rG$ is a homotopy equivalence. 
\item[iv)]  $\pi_0(\UB)=\mathbb{Z}_2$. The higher homotopy groups of $\UB$ are 8-periodic, and consequently, they are determined by $\pi_i(\UB^0)=\mathbf{0}$, $i=1,3,4,5$, $\pi_i(\UB^0)=\mathbb{Z}$, $i=2,6$, and $\pi_i(\UB^0)=\mathbb{Z}_2$, $i=7,8$. 
\item[v)] There is a $\mathbb{Z}_2$-valued index map defined by 
$$
\ind_{\UB}: \UB \to \mathbb{Z}_2, \, \, \, \,  \ind_{\UB} \left( \begin{pmatrix}  u   &   v  \\  \bar{v}  &   \bar{u} \end{pmatrix}\right)=\dim  \mathrm{ker}(u) \, \, (\text{mod  2}). 
$$
This  is a continuous morphism of groups, which parametrizes the two connected components of $\UB$.
\end{enumerate}
\end{prop}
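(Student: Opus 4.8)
The plan is to establish the structural statement ($\UB$ is a Lie subgroup of $\rG$ with the stated Lie algebra and norm) first, and then treat items i)--v) one at a time, mostly by invoking the isomorphism with $\O_{\mathrm{res}}(\fH^\R)$ and known facts about the restricted orthogonal group. For the opening claim, I would start from the identity $\UB = \U(\fH\oplus\fH)\cap\rG$ recorded in \eqref{ubog and g}: since both $\U(\fH\oplus\fH)$ and $\rG$ are Lie groups (the latter by Remark \ref{rem Banach alg-restricted orthogonal-isomorphism}(ii)), and $\UB$ is the fixed-point set of the involutive automorphism $G\mapsto (G^*)^{-1}$ of $\rG$ (equivalently $G\mapsto IGI$ using $G^{-1}=IG^*I$), it is a real Lie subgroup. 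Its Lie algebra is then the fixed-point set of the differential, i.e. $\{X\in\fg : X=-IX^*I,\ X^*=-X\} = \fg_\R$; unwinding the block form using $I$ from \eqref{conjugation I} gives exactly the matrix description with $x_1=-x_1^*$ and $x_2=-x_2^T\in\cB_2(\fH)$, and the norm \eqref{res norm ubog} is the restriction of \eqref{res norm} to such matrices (note $\|x_{22}\|=\|\bar x_1\|=\|x_1\|$ and $\|x_{21}\|_2=\|\bar x_2\|_2=\|x_2\|_2$).

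For the remaining items I would proceed as follows. Item (ii), the Lie group isomorphism $\Xi\colon\O_{\mathrm{res}}(\fH^\R)\to\UB$, is the technical heart and I would prove it by hand: given $O\in\GL(\fH^\R)$ write $O=O_c+O_a$ into linear and antilinear parts as in Remark \ref{rem Banach alg-restricted orthogonal-isomorphism}(iii), set $u:=O_c$ and $v:=O_aI_0$ (so $v$ is $\C$-linear since $O_a$ is antilinear and $I_0$ is antilinear), and check that $O^\tau=O^{-1}$ together with $[O,J_0]\in\cB_2$ translate precisely into the unitarity of $\begin{pmatrix}u&v\\\bar v&\bar u\end{pmatrix}$ on $\fH\oplus\fH$ and $v\in\cB_2(\fH)$; the condition $O_a\in\cB_2(\fH^\R)\Leftrightarrow v\in\cB_2(\fH)$ is already noted at the end of that remark. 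One checks $\Xi$ is a group homomorphism by a direct computation using $\overline{J_0}$-conjugation rules, it is bijective with explicit inverse, and it is bi-continuous for the two restricted norms because $\|u\|=\|O_c\|\le\|O\|$, $\|v\|_2=\|O_a\|_2$, etc. Smoothness then follows since both sides are analytic Lie groups and $\Xi$ is a continuous homomorphism that is a linear isomorphism on Lie algebras. Once (ii) is in place, items (iii)--(v) transfer: the polar decomposition of $\rG$ in (iii) is a general fact for the complex Lie group $\rG=\GL_2(\fH\oplus\fH,I,D)$ with its natural involution (cf. \cite{N}), giving $\rG\cong\UB\times i\fg_\R$ as a manifold and hence the homotopy equivalence $\UB\hookrightarrow\rG$; item (iv), the homotopy groups, follows from the homotopy type of $\O_{\mathrm{res}}(\fH^\R)$, which is known (it has the homotopy type of the stable orthogonal group, so Bott periodicity gives the 8-periodic pattern $\pi_0=\Z_2$, $\pi_{1,3,4,5}=0$, $\pi_{2,6}=\Z$, $\pi_{7,8}=\Z_2$) --- see \cite{PS, CO, N}.

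For item (i), that the corner $u$ of any $U=\begin{pmatrix}u&v\\\bar v&\bar u\end{pmatrix}\in\UB$ is Fredholm of index zero: unitarity gives $u^*u+v^*v=\d1$ and $uu^*+ \bar v\bar v^{\,*}=\d1$, wait more simply $uu^*+\overline{v v^*}=\d1$; since $v\in\cB_2(\fH)\subseteq\cB_\infty(\fH)$ is compact, $u^*u=\d1-v^*v$ and $uu^*=\d1-\overline{vv^*}$ are both $\d1$ minus a compact operator, hence invertible modulo compacts, so $u$ is Fredholm; and $\bar u=I_0uI_0$ is unitarily (conjugate-linearly) equivalent to $u$ so it is Fredholm too with $\ind(\bar u)=-\ind(u)$, while $\begin{pmatrix}u&v\\\bar v&\bar u\end{pmatrix}$ being invertible and $v$ compact forces the block-diagonal part $\begin{pmatrix}u&0\\0&\bar u\end{pmatrix}$ to be Fredholm of index $0$, whence $\ind(u)+\ind(\bar u)=0$ and combined with $\ind(\bar u)=-\ind(u)$ this is automatic --- to get $\ind(u)=0$ itself one argues that $u$ is a compact perturbation of the unitary block-projection, or more directly uses that $\ind(u)=-\ind(\bar u)=-\ind(I_0uI_0)$ and conjugation by the antilinear $I_0$ reverses index so $\ind(u)=\ind(u)$ gives nothing; instead note $u\oplus\bar u$ is Fredholm of index $0$ and its index equals $2\ind(u)$ after identifying via $I_0$... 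I would instead simply cite that this is standard in the Shale--Stinespring literature \cite{BLS94}. Item (v), the $\Z_2$-index $\ind_{\UB}(U)=\dim\ker(u)\pmod 2$, is continuous because $\dim\ker(u)$ is locally constant modulo $2$ on Fredholm operators of index $0$ (the index being $0$ makes $\dim\ker = \dim\mathrm{coker}$, and this parity is a homotopy invariant), it is multiplicative by a standard Cayley-type or determinant-line argument, and it separates the two components of (iv) since on $\O_{\mathrm{res}}(\fH^\R)$ the component is detected by exactly this mod-2 dimension; I would reference \cite{PS, Ott, N} for the details. The main obstacle is the careful verification in (ii) that the algebraic conditions defining $\O_{\mathrm{res}}(\fH^\R)$ and $\UB$ correspond under $O\mapsto\begin{pmatrix}u&v\\\bar v&\bar u\end{pmatrix}$, and that $\Xi$ respects the group operations --- everything else is either a formal consequence or a citation.
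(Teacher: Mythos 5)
Your overall architecture matches the paper's (establish the Lie subgroup structure, prove (ii) by hand, transfer (iii)--(v) from \cite{N} and the literature), with two mild route differences: for the subgroup claim you invoke the fixed-point-set-of-an-involution principle rather than verifying, as the paper does, the local exponential compatibility $\exp(\cW\cap\ub)=\cV\cap\UB$ via the logarithm series in $\cB_2(\fH\oplus\fH,D)$ --- note that with the paper's (Bourbaki-style) definition of Lie subgroup this chart condition is exactly what must be checked, so if you use the involution argument you should still extract that condition from it; and for (ii) you work directly with the block-matrix/antilinear-part description, whereas the paper constructs $\Xi(O)=T^{-1}O_\C T$ through the complexification $\fH_\C$ and only afterwards remarks that this agrees with the matrix formula \eqref{iso matrix op}. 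Both of these are acceptable alternatives.

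The genuine gap is in item (i). You correctly get that $u$ is Fredholm ($u^*u=\d1-v^*v$ with $v$ compact), but then you assert $\ind(\bar u)=-\ind(u)$ and conclude that the identity $\ind(u)+\ind(\bar u)=\ind(U)=0$ ``is automatic'' and gives nothing, after which you abandon the argument and cite \cite{BLS94}. The sign is wrong: conjugation by the conjugate-linear isometric involution $I_0$ \emph{preserves} the Fredholm index. Indeed $\ker(\bar u)=\ker(I_0uI_0)=I_0\ker(u)$ and, since $(I_0uI_0)^*=I_0u^*I_0$, also $\ker(\bar u^*)=I_0\ker(u^*)$; as $I_0$ carries complex subspaces to complex subspaces of the same complex dimension, $\ind(\bar u)=\ind(u)$. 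Combining this with $0=\ind(U)=\ind(u)+\ind(\bar u)$ (the standard index formula for a block operator matrix whose off-diagonal entries are compact) gives $2\ind(u)=0$, hence $\ind(u)=\ind(\bar u)=0$. This is precisely the paper's two-line argument; your ``$u\oplus\bar u$ has index $2\ind(u)$'' aside is the right idea, and the only missing ingredient was the correct behaviour of the index under antilinear conjugation.
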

\begin{proof}
First, it is easy to see that 
$\UB$ is closed in $\rG$ and $\ub=\{ X \in \fg : e^{tX} \in \UB, \, \forall t \in \R  \}$. 
Also note that $\ub=\fg_\R:=\{ X \in \fg : X^*=-X \}$ is the real form of $\fg$, which has a closed supplement in $\fg$ given by $i \fg_\R:=\{ X \in \fg : X^*=X \}$. According to  Theorem \ref{quotient Lie manifolds struct}, it remains to be shown  that
there are open sets $0 \in \cW \subseteq \fg$, $\d1=\d1_{\fH \oplus \fH} \in \cV \subseteq \rG$, such that the exponential map $\exp:\cW \to \cV$ is a diffeomorphism satisfying $\exp(\cW \cap \ub)=\cV \cap \UB$. The  non trivial inclusion is `$\supseteq$'. Pick $U=e^X \in \UB$, for some $X \in \cW$.  Eventually shrinking $\cV$ so that $\|U - \d1 \|_{\mathrm{res}}<1$, and then using  analytic functional calculus in the Banach algebra $\cB_2(\fH \oplus \fH, D)$, one gets that  $X=\log(U)=\sum_{n \geq 1} (-1)^{n+1} \frac{(U- \d1)^n}{n} \in \cB_2(\fH \oplus \fH, D)$. By this expression in terms of a series, it follows that $X^*=-X$ and $X=IXI$. Therefore, $X \in \ub$. Hence $\UB$ is a Lie subgroup of $\rG$, whose Lie algebra is given by $\ub$.


\medskip

\noi $i)$ Using the expressions in  \eqref{Bog 1}, and noting that $v \in \cB_2(\fH)$ and thus a compact operator,  it follows that $u$ is invertible in the Calkin algebra. Hence $u$ and $\bar{u}$ are Fredholm operators. We write $\ind(u)=\dim \ker(u) - \dim \ker(u^*)$ for its Fredholm index. But $U$ is unitary, so that it is a Fredholm operator of  index  zero on $\fH\oplus \fH$. It is a well-known property of the index of block operator matrices that one can calculate  $0=\ind(U)=\ind(u)+ \ind(\bar{u}).$ Since $\ind(u)=\ind(\bar{u})$, it follows that $\ind(u)=\ind(\bar{u})=0$.

\medskip

\noi $ii)$ We follow the construction in \cite[Section IV.2]{N} (see also \cite{PR}).  We begin by taking the complexification $\fH_\C:=(\fH^\R)_\C$, and denote by $J$ the extension  of $J_0$ to $\fH_\C$. One possible way to introduce the complexification is to put $\fH_\C=\fH^\R \times \fH^\R$ as a set equipped with the usual sum,  multiplication by complex scalars and  inner product. The extension of an arbitrary  linear operator $A$ on $\fH^\R$ to $\fH_\C$ is given by $A_\C(f,g)=(Af,Ag)$, for $f,g \in \fH^\R$.  Notice that we can decompose the space as $\fH_\C=\fH_\C^+ \oplus \fH_\C^-$, where $\fH_\C^{\pm}$ are the $\pm i$-eigenspaces of $J$.
Each vector in $\fH_\C^{\pm}$ can be written as $(f ,\mp i f)$. It can be checked that the operator  $U:\fH \to \fH_\C^+ $, $Uf=\frac{1}{\sqrt{2}}(f,-if)$ is a complex linear surjective isometry. 
Next we define the    conjugation $C:\fH_\C \to \fH_\C$, $C((f,g))=(f,-g)$. Clearly, we have that $\fH^\R\cong \fH^\R \times \{ 0 \}=\{  (f,g) \in \fH_\C \, : \,  C((f,g))=(f,g)   \}$. Also note that $C \fH_\C^\pm=\fH_\C^\mp$. 
There is an isometric isomorphism between $\fH\oplus \fH$ and $\fH_\C=\fH_\C^+ \oplus \fH_\C^-$ given by
$$
T: \fH\oplus \fH   \to   \fH_\C  , \, \, \, \, T((f,g))=Uf + CUI_0 g=\frac{1}{\sqrt{2}}\left(f + \bar{g}, i(\bar{g} - f)\right).  
$$
By using  this isomorphism we can identify operators on $\fH\oplus \fH$ with operators on $\fH_\C$, i.e.
$\cB(\fH\oplus \fH) \to \cB(\fH_\C), \, \, \, \, X \mapsto \Ad_T X=TXT^{-1}.$
Let $\U(\fH_\C) $ be the unitary group of $\fH_\C$. The extension of orthogonal operators to $\fH_\C$ gives the following isomorphism between the Lie groups
$$
\O_{\mathrm{res}}(\fH^\R) \to \{ \, U \in \U(\fH_\C)  \,  :  \, UC=CU ,  \,  [U,J] \in \cB_2(\fH_\C)   \, \}, \, \, \, \, \, 
O \mapsto O_\C . 
$$
 Therefore, there is a Lie group isomorphism given by 
\begin{equation}\label{isom Banach alg}
\Xi:  \O_{\mathrm{res}}(\fH^\R) \to   \UB, \, \, \, \, \, \,   \Xi(O) = T^{-1} O_\C  T.
\end{equation}
We only observe that the map $\Xi$ actually  takes values in $\UB$, which can be deduced from the characterization  given in \eqref{ubog and g}, and noting that  $iD=\Ad_{T^{-1}}J$ and  $I=\Ad_{T^{-1}}C$.  All the properties of Lie group isomorphism are  straightforward to check. Finally, we remark that this isomorphism can be written in terms of the block matrix operators, and the linear and antilinear parts as in Eq. \eqref{iso matrix op}.

\medskip

\noi $iii)$ These are proved in  \cite[Prop. III.2]{N} and \cite[Prop. III.8]{N}.

\medskip

\noi $iv)$  This is \cite[Prop. III.14]{N} combined with the homotopy equivalence of  item $iii)$.

\medskip 

\noi $v)$ For instance, we refer to \cite{A87} for the construction of this $\mathbb{Z}_2$-valued index map. Alternatively, an index was given in \cite{CO} for a general restricted orthogonal group on a real Hilbert space with a complex structure. For our particular real Hilbert space $\fH^\R$ with complex structure $J_0$ the mentioned index is given by 
$
\ind_{\O_{\mathrm{res}}}: \O_{\mathrm{res}}(\fH^\R ) \to \mathbb{Z}_2$, $\ind_{\O_{\mathrm{res}}}(O)=\dim_\C \ker(O-J_0 O J_0) \, \, (\text{mod  2}),
$
where the subscript $\C$ means that the complex dimension must be computed. 
One can verify  that $\ind_{\UB}(U)=\ind_{\O_{\mathrm{res}}}(\Xi^{-1}(U))$.
\end{proof}

\subsection{A reductive structure on orbits of g1-pdms and quasi-free states}

In this subsection, we study the orbits of the conjugacy action of $\UB$ on admissible g1-pdms  as  reductive homogeneous spaces of $\UB$, or equivalently, orbits of  quasi-free states with finite particle number.

\begin{rem}\label{proj ubog diag}
We first point out some useful spectral properties of g1-pdms. 

\medskip

\noi $i)$ According to Theorem \ref{ubog diagonalization}, any $\Gamma \in \DF$ can be diagonalized 
\begin{equation*}
W\Gamma W^*=\begin{pmatrix}  \Lambda   &   0  \\   0   &  \d1  - \Lambda \end{pmatrix},
\end{equation*}
for some unitary $W \in \UB$. In what follows, it will be convenient to express $\Lambda$ using the spectral theorem for trace-class operators. Indeed, there is a family of  orthogonal projections $\{ p_i \}_{i =0}^r$ ($0 \leq r \leq \infty$) satisfying $\sum_{i=0}^r p_i=\d1$ (strong operator topology convergence if $r=\infty$), $p_ip_j=\delta_{ij}p_i$, $\dim(\ran(p_i)):=m_i < \infty$ for all $i \geq 1$ and 
$$
\Lambda=\sum_{i=1}^r \lambda_i p_i .
$$ 
Here the convergence is understood in the $\| \, \cdot \, \|_1$-norm if $r=\infty$; and the eigenvalues satisfy $\lambda_i \in (0,\frac{1}{2}]$, $i \geq 1$, $\lambda_i \neq \lambda _j$ if $i \neq j$ and $\sum_{i=1}^r m_i \lambda_i  < \infty$. Observe that  one can also assume  $p_i=\bar{p}_i=p_i^T$ because $\Lambda$ is diagonal with respect to the fixed orthonormal basis $\{ \varphi_k\}_{k\geq 1}$.  Finally, we put $\lambda_0=0$, so that $p_0$ is the projection onto $\ker(\Lambda)$, which satisfies $\dim(\ran(p_0)):=m_0 \in [0,\infty]$.
 
\medskip

\noi $ii)$ Notice that the spectrum of $\Gamma$ is then given by 
$$
\sigma(\Gamma)=\{ 0,1 \} \cup \{ \lambda_i \}_{i=1}^r \cup \{ 1- \lambda_i\}_{i=1}^r .
$$ 
Since $\Gamma$ is a self-adjoint operator, it is well known that $\sigma(\Gamma)=\sigma_p(\Gamma) \cup \sigma_{ess}(\Gamma)$, where $\sigma_p(\Gamma)$ and $\sigma_{ess}(\Gamma)$ indicate the point spectrum  (consisting of all eigenvalues of finite multiplicity) and the essential spectrum of $\Gamma$, respectively. Clearly, we have $\sigma_p(\Gamma)=\{ \lambda_i \}_{i=1}^r \cup \{ 1- \lambda_i\}_{i=1}^r$ and $\sigma_{ess}(\Gamma)=\{ 0,\, 1\}$. 
\end{rem}

\begin{rem} 
Notice that $\UB$ acts on $\DF$ by unitary conjugation:  
$$
U \cdot \Gamma=U\Gamma U^*,  \, \, \, \, \, \, U \in \UB , \, \, \, \Gamma \in \DF.
$$ 
It is  well-known in the Hartree-Fock-Bogoliubov literature that this is actually an action (see, e.g.,  \cite{BLS94}). We add the matrix form of $U \cdot \Gamma$ to  briefly show this fact here.
If
$\Gamma=\Gamma[\gamma, \alpha]$,  $U=\begin{pmatrix}  u  &   v   \\   \bar{v} &  \bar{u}  \end{pmatrix}$,
then
\begin{equation}\label{action ubog}
U \cdot \Gamma=
\begin{pmatrix}
u \gamma u^* + v \alpha^*u^* + u \alpha v^* + v(\d1 - \bar{\gamma})v^* &
u\gamma \bar{v}^*  + v \alpha^* \bar{v}^* + u \alpha \bar{u}^* + v(\d1-\bar{\gamma})\bar{u}^* \\
\bar{v}\gamma u^* + \bar{u} \alpha^* u^* + \bar{v} \alpha v^* + \bar{u} (\d1 - \bar{\gamma})v^* & 
\bar{v} \gamma \bar{v}^* + \bar{u} \alpha^* \bar{v}^* + \bar{v} \alpha \bar{u}^* + \bar{u} (\d1 - \bar{\gamma}) \bar{u}^*
\end{pmatrix}.
\end{equation}
From this expression,   we deduce that the entry $(U\cdot \Gamma)_{11} \in \cB_1(\fH)$ because $\alpha, v \in \cB_2(\fH)$
(see Remark \ref{d trace rem} $ii)$). Also it easily follows that $(U\cdot \Gamma)_{22}=\d1 - \overline{(U\cdot \Gamma)}_{11}$ using that 
$\alpha^T=-\alpha$, or equivalently, $\alpha^*=-\bar{\alpha}$. The remaining conditions to ensure that $U \cdot \Gamma \in \DF$ are trivial. 
\end{rem}

Given $\Gamma \in \DF$, we consider its orbit 
$$
\cO(\Gamma) =\{ U\Gamma U^* : U \in \UB  \}.
$$
In order to prove that these orbits are smooth homogeneous spaces of $\UB$, we have to consider the isotropy group of $\UB$ at $\Gamma$, that is,
$$
\UB^\Gamma :=\{ U \in \UB : U \Gamma U^* =\Gamma \},
$$
and prove the  following:

\begin{lem}\label{lemma Lie subg g1-pdms}
$\UB^\Gamma$ is a Lie subgroup of $\UB$.   
\end{lem}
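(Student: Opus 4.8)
The plan is to invoke the Lie-subgroup criterion already used in the proof of Proposition~\ref{elementary properties}, namely Theorem~\ref{quotient Lie manifolds struct}. Since the elements of $\UB$ are unitary, $\UB^\Gamma=\{\,U\in\UB : U\Gamma=\Gamma U\,\}$, which is manifestly closed in $\UB$. Its putative Lie algebra is
$$
\ub^\Gamma:=\{\,X\in\ub : e^{tX}\in\UB^\Gamma\ \forall t\in\R\,\}=\{\,X\in\ub : [X,\Gamma]=0\,\},
$$
the two descriptions agreeing by differentiating $e^{tX}\Gamma e^{-tX}=\Gamma$ at $t=0$; this is clearly a closed Lie subalgebra. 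By the quoted criterion it then remains to check that $\ub^\Gamma$ is \emph{complemented} in $\ub$ and that there are neighbourhoods $0\in\cW\subseteq\fg$, $\d1\in\cV\subseteq\rG$ with $\exp\colon\cW\to\cV$ a diffeomorphism satisfying $\exp(\cW\cap\ub^\Gamma)=\cV\cap\UB^\Gamma$.

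First I would reduce to a diagonal g1-pdm. By Remark~\ref{proj ubog diag} there is $W\in\UB$ with $W\Gamma W^*=\Gamma_0:=\begin{pmatrix}\Lambda & 0\\ 0 & \d1-\Lambda\end{pmatrix}$, where $\Lambda=\sum_{i\geq1}\lambda_i p_i$, $\lambda_0=0$. The inner automorphism $\Ad_W\colon\UB\to\UB$ is a Lie group isomorphism carrying $\UB^\Gamma$ onto $\UB^{\Gamma_0}$ and $\ub^\Gamma$ onto $\ub^{\Gamma_0}$, so it suffices to treat $\Gamma_0$. A block computation shows that $X=\begin{pmatrix}x_1 & x_2\\ \bar x_2 & \bar x_1\end{pmatrix}\in\ub$ lies in $\ub^{\Gamma_0}$ if and only if $[x_1,\Lambda]=0$ and $\Lambda x_2+x_2\Lambda=x_2$; squeezing the latter between $p_i$ and $p_j$ gives $(\lambda_i+\lambda_j-1)p_ix_2p_j=0$, and since $\lambda_i\in[0,\tfrac12]$ this forces $p_ix_2p_j=0$ unless $\lambda_i=\lambda_j=\tfrac12$, i.e. $x_2$ is supported on the (unique, finite-rank, possibly absent) spectral block of $\Lambda$ at $\tfrac12$.

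Next I would exhibit an explicit bounded projection. Set $\sigma(y):=\sum_{i\geq0}p_iyp_i$ (the pinching, convergent in the strong operator topology, with $\|\sigma(y)\|\leq\|y\|$) and $\pi(y):=\sum_{\lambda_i+\lambda_j=1}p_iyp_j$ (a single finite-rank block, or $0$), and define $E(X):=\begin{pmatrix}\sigma(x_1) & \pi(x_2)\\ \overline{\pi(x_2)} & \overline{\sigma(x_1)}\end{pmatrix}$. Using $\bar p_i=p_i^T=p_i$ one checks that $E(X)$ again has $\ub$-form (skew-adjointness is immediate; $\pi(y)^T=-\pi(y)$, and $\pi(y)$ is finite rank hence Hilbert--Schmidt), that $E$ is idempotent with range exactly $\ub^{\Gamma_0}$ (indeed $\sigma(x_1)$ commutes with every $p_j$, hence with $\Lambda$, and $\Lambda\pi(x_2)+\pi(x_2)\Lambda=\pi(x_2)$, while $E$ fixes every element of $\ub^{\Gamma_0}$), and that $\|E(X)\|_{\res}\leq\|X\|_{\res}$ because the pinchings are contractive for both the operator norm and the Hilbert--Schmidt norm. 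Thus $E$ is a bounded idempotent of $\ub$ with range $\ub^{\Gamma_0}$, so $\ub^{\Gamma_0}$ is closed and complemented.

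Finally, for the local exponential condition I would argue verbatim as in the proof of Proposition~\ref{elementary properties}: take $\cW,\cV$ as there, shrinking $\cV$ so that $\|U-\d1\|_{\res}<1$ on $\cV$, and note that if $U=e^X\in\UB^{\Gamma_0}$ with $X\in\cW\cap\ub$, then $X=\log U=\sum_{n\geq1}\frac{(-1)^{n+1}}{n}(U-\d1)^n$ converges in $\cB_2(\fH\oplus\fH,D)$; since $U$ commutes with $\Gamma_0$, so does every $(U-\d1)^n$, whence $[X,\Gamma_0]=0$ and $X\in\ub^{\Gamma_0}$. The reverse inclusion is trivial, so $\exp(\cW\cap\ub^{\Gamma_0})=\cV\cap\UB^{\Gamma_0}$, and Theorem~\ref{quotient Lie manifolds struct} yields that $\UB^{\Gamma_0}$, hence $\UB^\Gamma$, is a Lie subgroup of $\UB$ with Lie algebra $\ub^\Gamma$. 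I expect the main obstacle to be the construction of the bounded projection $E$: this is where the diagonalization of Remark~\ref{proj ubog diag} is indispensable, and where one must use carefully that the only surviving off-diagonal blocks are finite rank (so automatically Hilbert--Schmidt) and that the pinchings are contractive for the restricted norm.
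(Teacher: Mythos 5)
Your proposal is correct and follows essentially the same route as the paper's proof: reduction to a diagonal g1-pdm via Theorem \ref{ubog diagonalization}, the block analysis of the commutant condition $x_1\Lambda=\Lambda x_1$, $x_2(\d1-\Lambda)=\Lambda x_2$ with the dichotomy at the eigenvalue $\tfrac12$, the pinching projection (the paper's conditional expectation $\cE_\Gamma$) as the bounded idempotent onto the isotropy Lie algebra, and the logarithm-series argument for the local exponential condition. Nothing to add.
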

\begin{proof}
First, it is clear $\UB^\Gamma$ is a closed subgroup of $\UB$ in topology defined by the norm in  \eqref{res norm ubog}. Set
$$
\ubg:=\{ X \in \mathfrak{u}_{\mathrm{Bog}}  :  e^{tX} \in \UB^\Gamma, \, \forall t \in \R \},
$$
which is then a closed Lie subalgebra of $\ub$ by Theorem \ref{quotient Lie manifolds struct}. By the same result we have to check that  there are open sets $0 \in \cW \subseteq \mathfrak{u}_{\mathrm{Bog}}$, $1 \in \cV \subseteq \UB$, such that the exponential map $\exp:\cW \to \cV$ is a diffeomorphism satisfying $\exp(\cW \cap \ubg)=\cV \cap \UB^\Gamma$, and the subspace $\ubg$ has a closed supplement in $\ub$.


  It will be useful to observe that 
\begin{equation}\label{ubg as a commutator}
\ubg=\left\{  X \in \mathfrak{u}_{\mathrm{Bog}}   :    X\Gamma = \Gamma X \right\}.
\end{equation}
Indeed, one inclusion can be shown by taking the derivative of the curve $\gamma(t)=e^{tX}$ at $t=0$; meanwhile the other inclusion follows from the fact that $X\Gamma=\Gamma X$ implies that $(tX)^n\Gamma=\Gamma (tX)^n$, $n\geq 0$, and consequently, $e^{tX}\Gamma=\Gamma e^{tX}$. 
 In order to check $\exp(\cW \cap \ubg)=\cV \cap \UB^\Gamma$, we take $U=e^X \in \UB^\Gamma$, where $X \in \cW$. Using analytic functional calculus in the Banach algebra $\cB_2(\fH \oplus \fH, D)$, we get that $X=\log(U)=\sum_{n \geq 1} (-1)^{n+1} \frac{(U- \d1)^n}{n}$, whenever $\cV$ is a sufficiently small identity neighborhood. Since $U\Gamma = \Gamma U$ yields $(U-\d1)^n\Gamma=\Gamma (U-\d1)^n$, $n \geq 1$, we obtain that $\log(U)\Gamma=\Gamma \log(U)$, which proves the desired inclusion.  The other inclusion is trivial.

Now we show that $\ubg$ admits a closed supplement in $\ub$. Note that   $W \UB^\Gamma W^*=\UB^{W\Gamma W^*}$ for every $W \in \UB$. Then it follows that $\UB^\Gamma$ is a Lie subgroup if and only if $\UB^{W\Gamma W^*}$ is. By Theorem \ref{ubog diagonalization} we may therefore assume that $\Gamma$ has  the form  
$\Gamma=\Gamma[\Lambda,0]$, 
where $\Lambda$  is a trace-class diagonal operator with respect to the fixed orthonormal basis $\{ \varphi_k\}_{k\geq 1}$. We now use the notation and properties given in Remark \ref{proj ubog diag}, where $\Lambda$ is expressed as $\Lambda=\sum_{i=1}^r \lambda_i p_i$ for a decomposition of the identity  $\{ p_i\}_{i=0}^r$ ($0\leq r \leq \infty$).  Recall that $\Gamma$ has spectrum given by  $\sigma(\Gamma)=\{ 0,1 \} \cup \{ \lambda_i \}_{i=1}^r \cup \{ 1- \lambda_i\}_{i=1}^r$, where $\lambda_i \in (0,\frac{1}{2}]$ are distinct eigenvalues of finite multiplicity. 


Then for any operator 
$
X= \begin{pmatrix}
x_1 & x_2 \\
\bar{x}_2 & \bar{x}_1
\end{pmatrix} \in \ub,
$
the condition $X\Gamma=\Gamma X$ is equivalent to have 
\begin{equation}\label{2nd charac}
x_1 \Lambda = \Lambda x_1 , \, \, \, \, \,  x_2(\d1-\Lambda)=\Lambda x_2.
\end{equation}
From  $x_1 \Lambda =\Lambda x_1$, it follows that $\lambda_j p_i x_1p_j=\lambda_i p_i x_1 p_j$ for $i,j \geq 0$. Thus, $(\lambda_i-\lambda_j)p_i x_1 p_j=0$, which yields $p_ix_1p_j=0$, whenever $i\neq j$, $i,j \geq 0$.  Hence $x_1$ must have the block diagonal form $x_1=\sum_{i=0}^r p_i x_1 p_i$.

  On the other hand, $x_2(\d1-\Lambda)=\Lambda x_2$ implies that $(1-\lambda_j - \lambda_i) p_ix_2p_j=0$, $i,j \geq 0$. But $1-\lambda_j - \lambda_i=0$ only if $\lambda_i=\lambda_j=\frac{1}{2}$. Thus, we have two cases. In the first case, $x_2=0$ when $\lambda=\frac{1}{2}$ is not an eigenvalue of $\Gamma$.  In the second case, we may w.l.o.g. assume that $\lambda_1=\frac{1}{2}$ is an eigenvalue of $\Gamma$ and $p_1$ is the finite-rank projection onto the corresponding eigenspace. Therefore $p_i x_2 p_j=0$ for all  $i,j \geq 0$, except  when $i=j=1$, which is equivalent to say that $x_2=p_1 x_2 p_1$.  Hence when $\frac{1}{2} \notin \sigma(\Gamma)$ the Lie algebra of the isotropy group can be written as 
\begin{equation*}
\ubg=\left\{  \begin{pmatrix}  x_1  &  0   \\ 0 & \bar{x}_1   \end{pmatrix} \in \ub: x_1=\sum_{i=0}^r p_i x_1  p_i          \right\},
\end{equation*}
meanwhile when $\frac{1}{2} \in \sigma(\Gamma)$ is 
\begin{equation*}
\ubg=\left\{  \begin{pmatrix}  x_1  &  x_2   \\ \bar{x}_2 & \bar{x}_1   \end{pmatrix} \in \ub: x_1=\sum_{i=0}^r p_i x_1  p_i, \, x_2=p_1 x_2 p_1          \right\}. 
\end{equation*}
Then a continuous projection $\cE_\Gamma: \ub \to \ub$ with range $\ran(\cE_\Gamma)=\mathfrak{u}_{\mathrm{Bog}}^\Gamma$ is given by 
\begin{equation*}
\cE_\Gamma\left(   \begin{pmatrix} x_1 & x_2 \\  \bar{x}_2 &   \bar{x}_1 \end{pmatrix}   \right)  =  
\begin{cases}
      \begin{pmatrix} \sum_{i=0}^r p_i x_1 p_i   &   0  \\
0  &  \sum_{i=0}^r p_i \bar{x}_1 p_i 
\end{pmatrix} &   \text{ if } \frac{1}{2}  \notin \sigma(\Gamma),\\
      \begin{pmatrix} \sum_{i=0}^r p_i x_1 p_i   &   p_1 x_2 p_1  \\
p_1 \bar{x}_2 p_1  &  \sum_{i=0}^r p_i \bar{x}_1 p_i 
\end{pmatrix} &  \text{ if } \frac{1}{2}  \in \sigma(\Gamma).
					\end{cases}       
\end{equation*}
The convergence of the series is in the strong operator topology when $r=\infty$.    
This proves that $\mathfrak{u}_{\mathrm{Bog}}^\Gamma$ has the closed supplement $\mathfrak{m}_\Gamma:=\ker(\cE_\Gamma)$ in both cases.
\end{proof}

\begin{rem} Repeating the same computations at the group level that we have done at the Lie algebras level in the previous proof, we find that when $\frac{1}{2} \notin \sigma(\Gamma)$ the isotropy group is given by 
\begin{equation}\label{isot group sin medio}
\UB^\Gamma =\left\{  \begin{pmatrix}   u &      0   \\  0  &  \bar{u}  \end{pmatrix} \in \UB : u=\sum_{i=0}^r  p_i u p_i   \right\}.
\end{equation}
In the case where  $\frac{1}{2} \in \sigma(\Gamma)$, the isotropy group is given by
\begin{equation}\label{isot group sec case}
\UB^\Gamma =\left\{  \begin{pmatrix}   u &      v   \\  \bar{v}  &  \bar{u}  \end{pmatrix} \in \UB : u=\sum_{i=0}^r  p_i u p_i, \, v=p_1vp_1   \right\}.
\end{equation}
\end{rem}

It  wil be useful to consider the projections $\cE_\Gamma$ defined in the proof of Lemma \ref{lemma Lie subg g1-pdms}.  The  following  definition is motivated by the conditional expectations associated to normal diagonalizable operators given in \cite{BL23}.

\begin{fed}\label{cond exp}
Let $\Gamma=\Gamma[\Lambda, 0]$ be a diagonal g1-pdm  which defines a family of projections $\{  p_i\}_{i=0}^r$ ($0 \leq  r \leq \infty$) satisfying the properties of Remark \ref{proj ubog diag}. The \textit{conditional expectation associated to $\Gamma$} is the map 
$\cE_\Gamma : \ub \to \ub$ defined by 
\begin{equation}\label{Cond expectation 0}
\cE_\Gamma\left(   \begin{pmatrix} x_1 & x_2 \\  \bar{x}_2 &   \bar{x}_1 \end{pmatrix}   \right)  =  
\begin{cases}
      \begin{pmatrix} \sum_{i=0}^r p_i x_1 p_i   &   0  \\
0  &  \sum_{i=0}^r p_i \bar{x}_1 p_i 
\end{pmatrix} &   \text{ if } \frac{1}{2}  \notin \sigma(\Gamma),\\
      \begin{pmatrix} \sum_{i=0}^r p_i x_1 p_i   &   p_1 x_2 p_1  \\
p_1 \bar{x}_2 p_1  &  \sum_{i=0}^r p_i \bar{x}_1 p_i 
\end{pmatrix} &  \text{ if } \frac{1}{2}  \in \sigma(\Gamma).
					\end{cases}       
\end{equation}
The convergence is in the strong operator topology when $r=\infty$.
\end{fed}

\begin{rem} It is straightforward to check  that the  conditional expectations in Eq. \eqref{Cond expectation 0} satisfy properties similar to that of conditional expectations in operator algebras: $\cE_\Gamma^2=\cE_\Gamma$, $\ran(\cE_\Gamma)=\ubg$, $\|\cE_\Gamma\|=\sup_{\| X\|_{\mathrm{res}}=1}\| \cE_\Gamma(X)\|_{\mathrm{res}}=1$, and $\cE_\Gamma(UZU^*)=U\cE_\Gamma(Z)U^*$, whenever $U \in \UB^\Gamma$ and $Z \in \ub$.
\end{rem}

\begin{teo}\label{g1pdm smooth homog spaces}
Let $\Gamma \in \DF$. Then the following assertions hold. 
\begin{enumerate}
\item[i)] The orbit
$
\cO(\Gamma)
 \cong \UB / \UB^\Gamma$ is  a  reductive homogeneous space of   $\UB$. Its tangent space at $\Gamma_1$ can be identified with
the real Banach space
\begin{equation*}
T_{\Gamma_1}\cO(\Gamma)= \{ [X,\Gamma_1 ] : X \in \ub  \},
\end{equation*}
equipped with the norm $\| [X  ,  \Gamma_1 ]  \|_{\Gamma_1}:=\inf \{  \|  X+ Y \|_{\mathrm{res}} : Y\Gamma_1=\Gamma_1 Y  \}$.
\item[ii)] Let us denote by 
$$
\UB^+:=\{  U \in \UB :  \ind_{\UB}(U)=0 \}, \, \, \, \, \, \UB^-:=\{  U \in \UB :  \ind_{\UB}(U)=1 \}.
$$
If $\frac{1}{2} \notin \sigma(\Gamma)$, then $\cO(\Gamma)=\UB^+ \cdot \Gamma \cup \UB^- \cdot \Gamma$,  is  the union of two connected components. If $\frac{1}{2} \in \sigma(\Gamma)$, then $\cO(\Gamma)=\UB^+ \cdot \Gamma=\UB^- \cdot \Gamma$ is connected. 
\end{enumerate}
\end{teo}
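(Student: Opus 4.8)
The plan is to derive~(i) from Lemma~\ref{lemma Lie subg g1-pdms} together with the quotient manifold theorem (Theorem~\ref{quotient Lie manifolds struct}), and to settle~(ii) by deciding, after reducing to a diagonal representative, whether the isotropy group $\UB^\Gamma$ meets the non-identity component $\UB^-$.

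For~(i): by Lemma~\ref{lemma Lie subg g1-pdms}, $\UB^\Gamma$ is a Lie subgroup whose Lie algebra $\ubg$ is complemented in $\ub$, with a continuous projection $\cE_\Gamma:\ub\to\ubg$ onto it. Hence Theorem~\ref{quotient Lie manifolds struct} endows $\UB/\UB^\Gamma$ with a Banach-manifold structure for which the canonical map $q:\UB\to\UB/\UB^\Gamma$ is a submersion; the orbit map $\pi:U\mapsto U\Gamma U^*$ is constant on the left $\UB^\Gamma$-cosets and induces a bijection $\UB/\UB^\Gamma\to\cO(\Gamma)$, which we use to transport the manifold structure, so that $\cO(\Gamma)$ becomes a homogeneous space of $\UB$. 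Putting $\mathfrak{m}_\Gamma:=\ker\cE_\Gamma$ gives a topological direct sum $\ub=\ubg\oplus\mathfrak{m}_\Gamma$; since $\cE_\Gamma(UZU^*)=U\cE_\Gamma(Z)U^*$ for $U\in\UB^\Gamma$, the complement $\mathfrak{m}_\Gamma$ is $\Ad(\UB^\Gamma)$-invariant, which is exactly the reductivity condition. For the tangent space at an arbitrary $\Gamma_1\in\cO(\Gamma)$ one uses that the manifold structure on $\cO(\Gamma)$ built from $\UB/\UB^{\Gamma_1}$ agrees with the one built from $\UB/\UB^\Gamma$; differentiating $t\mapsto e^{tX}\Gamma_1e^{-tX}$ at $t=0$ identifies the differential of the orbit map through $\Gamma_1$ with $X\mapsto[X,\Gamma_1]$, whose kernel is $\{Y\in\ub:Y\Gamma_1=\Gamma_1Y\}=\mathfrak{u}_{\mathrm{Bog}}^{\Gamma_1}$ by~\eqref{ubg as a commutator}. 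This yields the Banach-space isomorphism $\ub/\mathfrak{u}_{\mathrm{Bog}}^{\Gamma_1}\cong T_{\Gamma_1}\cO(\Gamma)=\{[X,\Gamma_1]:X\in\ub\}$, and transporting the quotient norm (using that $\mathfrak{u}_{\mathrm{Bog}}^{\Gamma_1}$ is a linear subspace, so $-Y$ ranges over it as well) gives $\|[X,\Gamma_1]\|_{\Gamma_1}=\inf\{\|X+Y\|_{\mathrm{res}}:Y\in\ub,\ Y\Gamma_1=\Gamma_1Y\}$.

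For~(ii): the orbit map $\pi$ is open, since it factors through the open map $q$ and a diffeomorphism. As $\UB=\UB^+\sqcup\UB^-$ with both pieces open, connected and nonempty (Proposition~\ref{elementary properties}~iv)--v)), we get $\cO(\Gamma)=\pi(\UB^+)\cup\pi(\UB^-)$ with both pieces open, connected and nonempty, and $\pi(\UB^+)=\pi(\UB^-)$ precisely when $\UB^\Gamma$ contains an element of index~$1$: if $U\in\UB^\Gamma$ and $\ind_{\UB}(U)=1$, then $\pi(\UB^+)=\pi(\UB^+U)=\pi(\UB^-)$ because $U\Gamma U^*=\Gamma$ and $\UB^+U=\UB^-$; conversely, if $\UB^\Gamma\subseteq\UB^+$ and $V_1\Gamma V_1^*=V_2\Gamma V_2^*$, then $V_2^{-1}V_1\in\UB^\Gamma\subseteq\UB^+$, so $V_1$ and $V_2$ lie in the same component. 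The property ``$\UB^\Gamma\subseteq\UB^+$'' is invariant along the orbit, because $\UB^{W\Gamma W^*}=W\UB^\Gamma W^{-1}$ and $\ind_{\UB}$ is conjugation invariant, so we may take $\Gamma=\Gamma[\Lambda,0]$ diagonal and invoke the descriptions~\eqref{isot group sin medio}--\eqref{isot group sec case}. If $\tfrac12\notin\sigma(\Gamma)$, every element of $\UB^\Gamma$ equals $\left(\begin{smallmatrix}u&0\\0&\bar u\end{smallmatrix}\right)$ with $u$ unitary, hence $\ker u=\{0\}$ and $\ind_{\UB}=0$; thus $\UB^\Gamma\subseteq\UB^+$ and $\cO(\Gamma)=\pi(\UB^+)\sqcup\pi(\UB^-)$ is its decomposition into the two connected components. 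If $\tfrac12\in\sigma(\Gamma)$ (so $\lambda_1=\tfrac12$ with $m_1\geq1$), pick a unit vector $\varphi$ of the fixed basis inside $\ran(p_1)$, let $q$ be the orthogonal projection onto $\C\varphi$, and set $U:=\left(\begin{smallmatrix}\d1-q&q\\q&\d1-q\end{smallmatrix}\right)$; this is a self-adjoint unitary with finite-rank off-diagonal block, hence $U\in\UB$, it lies in $\UB^\Gamma$ by~\eqref{isot group sec case} (here $u=\d1-q=\sum_ip_iup_i$ since $q\leq p_1$, and $v=q=p_1vp_1$), and $\dim\ker(u)=1$, so $U\in\UB^\Gamma\cap\UB^-$ and $\cO(\Gamma)=\pi(\UB^+)=\pi(\UB^-)$ is connected.

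The only genuinely new ingredient, and the main obstacle, is the index computation for $\UB^\Gamma$ in~(ii): one must observe that the eigenvalue $\tfrac12$ is precisely what permits a nonzero pairing block in the isotropy group, and use it to build an isotropy element whose upper-left block has one-dimensional kernel, whereas its absence pins the isotropy group to the identity component. The remaining arguments are routine once Lemma~\ref{lemma Lie subg g1-pdms}, Theorem~\ref{quotient Lie manifolds struct}, and the structure of $\UB$ in Proposition~\ref{elementary properties} are available.
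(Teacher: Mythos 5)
Your proof is correct, and part~(ii) takes a genuinely different (and cleaner) route than the paper. For the connectedness statement the paper argues by path lifting: it covers an arbitrary continuous path in $\cO(\Gamma)$ by finitely many domains of local cross sections, modifies each section so that its values have index~$0$, and concatenates to conclude that the endpoint stays in $\UB^+\cdot\Gamma$. You instead observe that $\pi$ is an open map, so $\cO(\Gamma)=\pi(\UB^+)\cup\pi(\UB^-)$ is a union of two nonempty open connected sets which are either equal or disjoint, the dichotomy being governed by whether $\UB^\Gamma$ meets $\UB^-$; disjointness then immediately forces each piece to be clopen and hence a connected component. This replaces the compactness/section bookkeeping by a purely topological argument and isolates the real content, namely the index computation on the isotropy group, which you carry out exactly as the paper does (block-diagonal isotropy elements have unitary upper-left block, hence index~$0$, when $\tfrac12\notin\sigma(\Gamma)$; and your self-adjoint unitary $\left(\begin{smallmatrix}\d1-q&q\\ q&\d1-q\end{smallmatrix}\right)$ is precisely the paper's swap operator $S$ when $\tfrac12\in\sigma(\Gamma)$). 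Part~(i) follows the paper's argument. The one place where you are lighter than the paper is the reductive structure: the definition in the appendix requires a \emph{smooth} distribution $\{\mathfrak{m}_{\Gamma_1}\}_{\Gamma_1\in\cO(\Gamma)}$ defined at every point, whereas you only exhibit $\mathfrak{m}_\Gamma=\ker\cE_\Gamma$ at the base point and verify its $\Ad(\UB^\Gamma)$-invariance. The paper spends a paragraph setting $\mathfrak{m}_{U\Gamma U^*}:=U\mathfrak{m}_\Gamma U^*$, checking that this is independent of the choice of $U$ (which is exactly where the invariance you proved is used), and establishing smoothness of $\Gamma_1\mapsto\d1-\cE_{\Gamma_1}$ via local cross sections. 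These steps are routine given what you have, but they are part of the statement and should be recorded.
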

\begin{proof}
$i)$ The map $\cO(\Gamma) \to \UB / \UB^\Gamma$, $W\Gamma W^* \mapsto W \UB^\Gamma$, is a bijection. Thus, we endow $\cO(\Gamma)$ with the manifold structure making such map into a diffeomorphism. The manifold structure of $\UB / \UB^\Gamma$ follows by using  Theorem \ref{quotient Lie manifolds struct} and Lemma \ref{lemma Lie subg g1-pdms}. Indeed, by these results $\UB / \UB^\Gamma$ becomes  a smooth homogeneous space of $\UB$. 

The assertion about tangent spaces now follows by using that the map $\pi: \UB \to \cO(\Gamma)$, $\pi(U)=U\Gamma U^*$, is a submersion. Indeed, its tangent map at $U \in \UB$, $\U \cdot \Gamma=\Gamma_1$, satisfies $T_{\Gamma_1}\cO(\Gamma)=\ran(T_U \pi)\cong \ub / \mathfrak{u}_{\mathrm{Bog}}^{\Gamma_1}\cong \{ [X ,\Gamma_1] : X \in \ub  \}$, where $[X,\Gamma_1]=X\Gamma_1 - \Gamma_1 X$ and $\ker(T_U \pi)=\mathfrak{u}_{\mathrm{Bog}}^{\Gamma_1}$ is the Lie algebra of the isotropy group at $\Gamma_1$. The norm given in the statement is an expression for the quotient norm in $\ub / \mathfrak{u}_{\mathrm{Bog}}^{\Gamma_1}$.

To prove that $\cO(\Gamma)$ is a  reductive homogeneous space we may assume that $\Gamma$ is diagonal, take $\cE_\Gamma$ its conditional expectation  and $\mathfrak{m}_\Gamma=\ker(\cE_\Gamma)$. From the proof of Lemma \ref{lemma Lie subg g1-pdms}, we know that $\ubg \oplus \mathfrak{m}_\Gamma=\ub$. Since $\cE_\Gamma(UXU^*)=U\cE_\Gamma(X)U^*$, for $U \in \UB^\Gamma$ and $X \in \ub$, it follows that $\Ad_U (\mathfrak{m}_\Gamma)=\mathfrak{m}_\Gamma$.  Then, for $\Gamma_1=U\Gamma U^*$, set $\mathfrak{m}_{\Gamma_1}:=U
\mathfrak{m}_\Gamma  U^*$. This is well defined since $U \Gamma U^*=W \Gamma W^*$ implies $W^* U \in \UB^\Gamma$. Thus, $\cE_\Gamma=\Ad_{W^*U} \circ \cE_\Gamma \circ \Ad_{U^*W}$, which yields 
$U \mathfrak{m}_\Gamma U^*=W \mathfrak{m}_\Gamma W^*$. 
By similar properties the distribution $\{ \mathfrak{m}_{\Gamma_1} \}_{\Gamma_1 \in \cO(\Gamma)}$ clearly satisfies $\ub=\mathfrak{m}_{\Gamma_1}\oplus \mathfrak{u}_{\mathrm{Bog}}^{\Gamma_1}$ and  $\Ad_U (\mathfrak{m}_{\Gamma_1})=\mathfrak{m}_{\Gamma_1}$, for $U \in \UB^{\Gamma_1}$. Fix $\Gamma_1 \in \cO(\Gamma)$. For any $U \in \UB$ such that $\Gamma_1=U\Gamma U^*$, the projection $\cE_{\Gamma_1}:=U \cE_\Gamma (U^* \, \cdot \, U)U^*$ 
is the unique projection satisfying $\ran(\cE_{\Gamma_1})=\mathfrak{u}_{\mathrm{Bog}}^{\Gamma_1}$ and $\ker(\cE_{\Gamma_1})=\mathfrak{m}_{\Gamma_1}$. On the other hand, the map $\pi_{\Gamma_1}: \UB \to \cO(\Gamma)$, $\pi_{\Gamma_1}(U)=U\Gamma_1  U^*$, is a  submersion. 
This is equivalent to the existence of a smooth local cross section at every point (see \cite[Cor. 8.3]{Up85}). In particular,  there are an open set $\cV_{\Gamma_1} \subseteq \cO(\Gamma)$, $\Gamma_1 \in \cV_{\Gamma_1}$ and a smooth map $s_{\Gamma_1}:\cV_{\Gamma_1} \to \UB$ such that $\pi_{\Gamma_1} \circ s_{\Gamma_1} = \mathrm{id}|_{\cV_{\Gamma_1}}$. Consider the smooth map $F: \UB \to \cB(\ub)$, $F(U)=\d1_{\cB(\ub)} - U \cE_\Gamma (U^* \, \cdot \, U)U^*$ and the map $f: \cO(\Gamma_1) \to \cB(\ub)$, $f(\Gamma')=\d1_{\cB(\ub)} -\cE_{\Gamma'}$. Therefore, $f=F \circ s_{\Gamma_1}$ locally at each $\Gamma_1$. Since $\Gamma_1$ is arbitrary, we get that $f$ is a smooth map, and consequently, the distribution  $\{ \mathfrak{m}_{\Gamma_1} \}_{\Gamma_1 \in \cO(\Gamma)}$ is smooth. Hence $\cO(\Gamma)$ is a  reductive homogeneous space.

\medskip

\noi $ii)$ According to Proposition \ref{elementary properties}, the $\mathbb{Z}_2$-valued index on $\UB$ parametrizes the two connected components of this group $\UB^+$ and $\UB^-$. Therefore, $\UB^+ \cdot \Gamma=\{  U\Gamma  U^* : U \in \UB^+ \}$ is a connected set because the map $\pi_{\Gamma}: \UB \to \cO(\Gamma)$, $\pi_{\Gamma}(U)=U\Gamma  U^*$ is continuous. Let us show that $\UB^+ \cdot \Gamma$ is a connected component of $\cO(\Gamma)$. Recall that path and connected components coincide in manifolds. So take a continuous path $\delta:[0,1] \to \cO(\Gamma)$, $\delta(0)=\Gamma$ and $\delta(1)=\Gamma_1$. We have to prove that $\Gamma_1 \in \UB^+ \cdot \Gamma$. 

By item $i)$ the maps $\pi_{\Gamma'}: \UB \to \cO(\Gamma)$, $\pi_{\Gamma'}(U)=U\Gamma'  U^*$, are submersions. 
As we have observed above,  there are an open set $\cV_{\Gamma'} \subseteq \cO(\Gamma)$, $\Gamma' \in \cV_{\Gamma'}$ and a smooth map $s_{\Gamma'}:\cV_{\Gamma'} \to \UB$ such that $\pi_{\Gamma'} \circ s_{\Gamma'} = \mathrm{id}|_{\cV_{\Gamma'}}$. 
By a standard compactness argument, there are a partition $0=t_0<t_1<\ldots <t_n=1$, open connected sets $\cV_i:=\cV_{\delta(t_i)}$, and smooth sections $s_i:=s_{\delta(t_i)}$ of $\pi_{\delta(t_i)}$, $i=0, \ldots, n$,  such that $\delta([0,1]) \subseteq \bigcup_{i=0}^n \cV_i$. 
Set $\delta(t_i):=\Gamma^{(i)}$, $i=0, \ldots, n$, where $\Gamma=\Gamma^{(0)}$ and $\Gamma_1=\Gamma^{(n)}$.
For $i=0$, we can have $\ind_{\UB}(s_0(\Gamma))= 0$ or $\ind_{\UB}(s_0(\Gamma))=1$. In the first case, we write $\tilde{s}_0=s_0$. In the second case,  we modify the section by defining $\tilde{s}_0(\Gamma')=s_0(\Gamma')s_0(\Gamma)$. Now this  a smooth section which satisfies
$\ind_{\UB}(\tilde{s}_0(\Gamma))=0$, and using that $\cV_0$ is connected, it also holds that $\ind_{\UB}(\tilde{s}_0(\Gamma^{(1)}))=0$. Notice that $\tilde{s}_0(\Gamma^{(1)})\Gamma \tilde{s}_0(\Gamma^{(1)})^*=\Gamma^{(1)}$. We may modify the other sections 
$s_i$ for $i=1, \ldots, n-1$ to obtain that $\ind_{\UB}(\tilde{s}_i(\Gamma^{(i+1)}))=0$ and
$$  
\tilde{s}_{n-1}(\Gamma^{(n)}) \ldots \tilde{s}_1(\Gamma^{(2)})\tilde{s}_0(\Gamma^{(1)})\Gamma \tilde{s}_0(\Gamma^{(1)})^*\tilde{s}_1(\Gamma^{(2)})^* \ldots \tilde{s}_{n-1}(\Gamma^{(n)})^*=\Gamma^{(n)}=\Gamma_1 \, .
$$
This means that $\Gamma_1 \in \UB^+ \cdot \Gamma$. Thus, $\UB^+ \cdot \Gamma$ is a connected component. 

Clearly, $\cO(\Gamma)=\UB^+ \cdot \Gamma \cup \UB^- \cdot \Gamma$. Fix $R \in \UB^-$.  Note that the map $\UB^+ \cdot \Gamma \to \UB^- \cdot \Gamma$, $U\Gamma U^* \to RU\Gamma U^*R^*$, is continuous. Therefore,
$\UB^- \cdot \Gamma$ is also connected. To show that it is a connected component, it suffices to prove that $\UB^+ \cdot \Gamma \cap \UB^- \cdot \Gamma=\emptyset$. Assume that $U\Gamma U^*=V\Gamma V^*$ for $U \in \UB^+$ and $V \in \UB^-$. Then we have $\ind_{\UB}(V^*U)=1$ and $V^*U \in \UB^\Gamma$. Note that $W\UB^{\pm}W^*=\UB^\pm$ for every $W \in \UB$. Thus, we may further assume that $\Gamma$ has a diagonal form. When $\frac{1}{2} \notin \sigma(\Gamma)$ the isotropy group is descirbed in \eqref{isot group sin medio}.
Thus we get that $P_+ (V^*U)|_{\fH \oplus \{ 0\}}$ is a block unitary operator. In particular, $\ind_{\UB}(V^*U)=0$, a contradiction. This proves that $\UB^- \cdot \Gamma$ is also a connected component. 

Finally, we suppose that $\frac{1}{2} \in \sigma(\Gamma)$. Again we can conjugate $\Gamma$ by a Bogoliubov transformation, so that $\Gamma$ is assumed to be diagonal. Now we use the expression of the isotropy group in \eqref{isot group sec case}.
Take a vector $\varphi_1 \in \ran(p_1)=\ker(\Lambda - \frac{1}{2} \d1)$ ($\lambda_1=\frac{1}{2}$). Define a Bogoliubov transformation $S$ by $S((\varphi_1,0))=(0,\varphi_1)$, $S((0,\varphi_1))=(\varphi_1,0)$, and that leaves fixed all the other vectors of the basis $\{ (\varphi_i,0) \}_{i\geq 1}\cup \{ (0,\varphi_j) \}_{j \geq 1}$.    Then $S\Gamma S^*=\Gamma$. But $\UB^+S=\UB^-$ because $\ind_{\UB}(S)=1$, so that $\UB^+ \cdot \Gamma= \UB^+S \cdot \Gamma  =\UB^- \cdot \Gamma=\cO(\Gamma)$.  
\end{proof}

\begin{cor}
$\DF$ is a smooth manifold. 
\end{cor}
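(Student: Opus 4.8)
The plan is to realize $\DF$ as the disjoint union of the reductive homogeneous spaces $\cO(\Gamma)$ furnished by Theorem \ref{g1pdm smooth homog spaces}, and to equip it with the associated coproduct smooth structure.

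The first step is to recall from Theorem \ref{ubog diagonalization} that every $\Gamma \in \DF$ is $\UB$-conjugate to a diagonal g1-pdm $\Gamma[\Lambda,0]$; hence $\DF=\bigcup_{\Gamma}\cO(\Gamma)$, the union ranging over diagonal g1-pdms. Since the conjugation action of $\UB$ partitions $\DF$ into orbits, distinct orbits are disjoint, so fixing a complete family $\cR\subseteq\DF$ of orbit representatives — which, by the diagonalization theorem, may be taken to consist of diagonal g1-pdms — we obtain the set-theoretic identity $\DF=\bigsqcup_{\Gamma\in\cR}\cO(\Gamma)$.

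Next, by Theorem \ref{g1pdm smooth homog spaces}$(i)$ each $\cO(\Gamma)\cong\UB/\UB^\Gamma$ is a smooth Banach manifold (in fact a reductive homogeneous space), with local model a closed complement $\mathfrak{m}_\Gamma$ of $\ubg$ in $\ub$. I would then declare $U\subseteq\DF$ open exactly when $U\cap\cO(\Gamma)$ is open in $\cO(\Gamma)$ for every $\Gamma\in\cR$, and take as atlas for $\DF$ the union of the atlases of the $\cO(\Gamma)$. Because charts coming from distinct orbits have disjoint domains, every transition map of this atlas is internal to a single orbit and hence smooth; thus $\DF$ acquires a smooth manifold structure in which each $\cO(\Gamma)$ is an open and closed submanifold. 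The only mild point to flag here is that the model spaces $\mathfrak{m}_\Gamma$ vary with the orbit, but this is harmless within the Banach-manifold framework of the appendix, where distinct components may be modeled on distinct Banach spaces.

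The step that deserves a comment rather than being a genuine obstacle is the comparison of topologies: the manifold topology just constructed is the coproduct topology of the orbit decomposition, and this is in general strictly finer than the relative topology inherited from $\bres$ or from $i\ub+P_-$ — for instance a diagonal $\Gamma$ with $\frac{1}{2}\notin\sigma(\Gamma)$ is a $\|\cdot\|_{\res}$-limit of g1-pdms lying outside $\cO(\Gamma)$, so the orbits are not relatively clopen. This is fully consistent with Section \ref{submanifolds}, where Theorem \ref{submanifold} only asserts agreement of these two topologies on an individual orbit when $\Gamma$ has finite spectrum, never on all of $\DF$ simultaneously.
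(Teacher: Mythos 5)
Your proof is correct and follows the same route as the paper: $\DF$ is written as the disjoint union of the orbits $\cO(\Gamma)$, each of which is a smooth homogeneous space by Theorem \ref{g1pdm smooth homog spaces}, and $\DF$ is given the resulting coproduct smooth structure. Your additional remarks on the coproduct topology versus the relative topology are a sensible elaboration but not needed for the statement.
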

\begin{proof}
Notice that $\DF$ is the disjoint  union of orbits, 
$
\DF=\bigsqcup_{[\Gamma] \in \DF / \approx} \cO(\Gamma),
$
where we write $\approx$ for the equivalence relation induced by the orbits. 
\end{proof}

\begin{rem}
We apply the previous results on g1-pdms to analyze the set $\Z$ of all quasi-free states. According to Theorem \ref{bog trans and unit impl}, for any $U \in \UB$, there exists a unitary implementer $\dU=\dU_U:\cF \to \cF$. The implementer is unique up to a constant in $\mathbb{T}$. Thus, given $\omega \in \Z$, the state defined by $\omega_U(A)=\omega(\dU A \dU^*)$,  $A \in \cA$, is independent of the unitary implementer.  Also as we have stated in Remark \ref{d trace rem} $i)$, it can be checked that $\omega_U \in \Z$. Furthermore, the map 
$$
\UB \times \Z \to \Z , \, \, \, \, \, (U, \omega) \mapsto U \cdot \omega =\omega_U 
$$
turns out to be an action of $\UB$ on $\Z$. This follows immediately by using  \eqref{fock space} to show that $\dU_{UV}=\lambda \dU_U \dU_V$, for $U,V \in \UB$ and some $\lambda \in \mathbb{T}$.
\end{rem}

\begin{cor}
The following assertions hold:
\begin{enumerate}
\item[i)]  For each $\omega \in \Z$, the orbit
$
\cO(\omega):=\{ \omega_U :   U \in \UB  \}
$
 is  a reductive homogeneous space of $\UB$. Consequently, $\Z$ is a smooth manifold. 
\item[ii)] The map $\Phi: \Z \to \cD$, $\Phi(\omega)=\Gamma_\omega$ is a smooth diffeomorphism.
\end{enumerate}
\end{cor}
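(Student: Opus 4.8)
The statement reduces to transporting the reductive homogeneous space structure of Theorem~\ref{g1pdm smooth homog spaces} along the equivariant bijection $\Phi\colon\Z\to\cD$, $\Phi(\omega)=\Gamma_\omega$. The first step is to record this equivariance: by the theory of unitary implementers recalled in Appendix~\ref{HFB} (Theorem~\ref{bog trans and unit impl}) together with the remark above defining the action of $\UB$ on $\Z$, one has
\[
\Phi(\omega_U)=U\Gamma_\omega U^{*}=U\cdot\Gamma_\omega,\qquad U\in\UB,\ \omega\in\Z,
\]
so that $\Phi$ intertwines the $\UB$-action on $\Z$ with the conjugation action on $\cD$. Injectivity of $\Phi$ then yields two consequences that I would use repeatedly: (a) for each $\omega\in\Z$, $\Phi$ restricts to a bijection $\cO(\omega)\to\cO(\Gamma_\omega)$; and (b) the stabilizer $\UB^{\omega}:=\{U\in\UB:\omega_U=\omega\}$ coincides with $\UB^{\Gamma_\omega}$.

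For part $i)$ the plan is to declare the map $\cO(\omega)\to\UB/\UB^{\omega}$, $\omega_U\mapsto U\UB^{\omega}$, a diffeomorphism and pull back the structure. By (b) and Lemma~\ref{lemma Lie subg g1-pdms}, $\UB^{\omega}=\UB^{\Gamma_\omega}$ is a Lie subgroup of $\UB$ whose Lie algebra admits the closed reductive complement $\mathfrak{m}_{\Gamma_\omega}=\ker(\cE_{\Gamma_\omega})$ built in the proof of that lemma; hence Theorem~\ref{quotient Lie manifolds struct} makes $\UB/\UB^{\omega}$ a reductive homogeneous space of $\UB$, and therefore so is $\cO(\omega)$. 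One must check, exactly as in the proof of Theorem~\ref{g1pdm smooth homog spaces}, that this structure does not depend on the chosen representative $U$ (two representatives of a point of $\cO(\omega)$ differ by an element of the common stabilizer) and that the reductive distribution varies smoothly over the orbit. Since $\Z=\bigsqcup_{[\omega]}\cO(\omega)$ is the disjoint union of its orbits, it thereby inherits a smooth manifold structure, as $\cD$ does in the corollary above.

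For part $ii)$, with these structures in place the restriction $\Phi|_{\cO(\omega)}\colon\cO(\omega)\to\cO(\Gamma_\omega)$ factors as
\[
\cO(\omega)\ \xrightarrow{\ \sim\ }\ \UB/\UB^{\omega}\ =\ \UB/\UB^{\Gamma_\omega}\ \xrightarrow{\ \sim\ }\ \cO(\Gamma_\omega),
\]
where the two outer maps are the defining diffeomorphisms and the equality in the middle is the identity of homogeneous spaces (same group, same closed subgroup); hence $\Phi|_{\cO(\omega)}$ is a diffeomorphism onto $\cO(\Gamma_\omega)$. Since $\Phi$ matches the orbit decomposition of $\Z$ with that of $\cD$, gluing these pieces shows that $\Phi$ is a global diffeomorphism, with smooth inverse given orbitwise.

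The only genuinely nontrivial ingredient is the displayed equivariance relation together with the fact that $(U,\omega)\mapsto\omega_U$ is a bona fide action of $\UB$; both rest on the unitary-implementer results (Theorem~\ref{bog trans and unit impl} and the remark preceding the statement), so that is the step I expect to require the most care. Once it is granted, the remainder is a purely formal transport of structure along an equivariant bijection, already modeled on the proof of Theorem~\ref{g1pdm smooth homog spaces}.
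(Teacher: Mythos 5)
Your overall strategy---identify the stabilizer of $\omega$ with $\UB^{\Gamma_\omega}$ via Lemma \ref{lemma Lie subg g1-pdms} and then transport the reductive structure of Theorem \ref{g1pdm smooth homog spaces} along $\Phi$ orbit by orbit---is exactly the paper's. Part $i)$ is fine as written: the only input it needs is that $\omega_U=\omega$ if and only if $U\Gamma_\omega=\Gamma_\omega U$, and that equivalence holds under either convention for the correspondence.

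The gap is in your key displayed formula, and it propagates into part $ii)$. By Remark \ref{d trace rem} $i)$ the correspondence satisfies $\Phi(\omega_U)=\Gamma_{\omega_U}=U^{*}\Gamma_\omega U$, \emph{not} $U\Gamma_\omega U^{*}$: from \eqref{fock space} one gets $\dU_U\bigl(c^{*}(g_1)+c(\bar{g}_2)\bigr)\dU_U^{*}=c^{*}(g_1')+c(\bar{g}_2')$ with $(g_1',g_2')=U(g_1,g_2)$, which forces the order $U^{*}\,\cdot\,U$ in the g1-pdm. Consequently the composite you write in $ii)$, namely $\omega_U\mapsto U\UB^{\Gamma_\omega}\mapsto U\Gamma_\omega U^{*}$, is not the map $\Phi$; moreover, since $\omega_U=\omega_V$ precisely when $VU^{*}\in\UB^{\Gamma_\omega}$ (same \emph{right} coset), the first arrow $\omega_U\mapsto U\UB^{\Gamma_\omega}$ is not even well defined on left cosets. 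The repair is routine---use the identification $\omega_U\mapsto U^{*}\UB^{\Gamma_\omega}$, after which your factorization really does compute $\Phi$---and this is exactly what the paper's proof encodes by evaluating $\Phi$ in the charts $\phi^{-1}=\pi_\omega\circ\exp$ and $\psi^{-1}=\pi_{\Gamma_\omega}\circ\exp$, obtaining the local expression $X\mapsto -X$; the minus sign is precisely the inversion your equivariance formula misses. As written, part $ii)$ of your argument does not go through without this correction.
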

\begin{proof}
$i)$ Using that the map $\Phi$ satisfies $\Gamma_{\omega_U}=\Phi(\omega_U)=U^*\Gamma_\omega U$, it follows that the isotropy group at $\omega$ is given by
$$
\{ U \in \UB : \omega_U = \omega \}=\{ U \in \UB :  U \Gamma_\omega = \Gamma_\omega U \}=\UB^{\Gamma_\omega}.
$$
Hence the isotropy group is a Lie subgroup of $\UB$ by Lemma \ref{lemma Lie subg g1-pdms},  and following the same proof as in Theorem \ref{g1pdm smooth homog spaces}, the orbit  $\cO(\omega)\cong \UB / \UB^{\Gamma_\omega}$ is a reductive homogeneous space of $\UB$.  
The assertion about the manifold structure of $\Z$ now follows by expressing this set as the union of disjoint orbits. 

\medskip

\noi $ii)$ The manifold structures of both $\Z$ and $\DF$ are obtained as the disjoint union of orbits. Therefore we may consider the restriction $\Phi:\cO(\omega) \to \cO(\Gamma_\omega)$ to prove our statement. The result is a consequence of being  $\cO(\omega)$ and $\cO(\Gamma_\omega)$ both diffeomorphic to $\UB/\UB^{\Gamma_\omega}$. Indeed, a chart at $\omega$ is given by $\phi^{-1}=\pi_\omega \circ \exp$, where $\pi_\omega:\UB \to \cO(\omega)$, $\pi_\omega(U)=\omega_U$, and $\exp$ is the exponential map of $\UB$ (see \cite[Lemma 4.21]{B06}). Moreover, $\phi$ is defined in an open set $\cW \subseteq \cO(\omega)$, $\omega \in \cW$, and $\phi(\cW)$ is an open set in $\mathfrak{m}_{\Gamma_\omega} \cong \ub / \mathfrak{u}_{\mathrm{Bog}}^{\Gamma_\omega}$, where $\mathfrak{m}_{\Gamma_\omega}$ is the closed subspace from the reductive structure. Similarly, a chart at $\Gamma_\omega$ is of the form $\psi^{-1}=\pi_{\Gamma_\omega} \circ \exp$, and it is a homeomorphism between open sets in $\cO(\Gamma_\omega)$ and $\mathfrak{m}_{\Gamma_\omega}$. Thus, the map $\Phi$ can be expressed locally as $(\psi \circ \Phi \circ \phi^{-1})(X)=(\psi \circ \Phi)(\omega_{e^X})=\psi(e^{-X}\Gamma e^X)=-X$. Hence $\Phi$ is a diffeomorphism. 
\end{proof}

\begin{rem}
We end this section with some remarks on a special type of orbits. It is well known that using the bijection between g1-pdms and quasi-free states the projection 
$P_-=\begin{pmatrix}   0  &   0  \\   0  &  \d1  \end{pmatrix}$ corresponds to the state $\omega_-$ defined 
by $\omega_-(A)=\PI{\Omega}{A\Omega}$, $A \in \cB(\cF)$, where $\Omega$ is the vacuum vector. This is the extension of the Fock state defined in the CAR algebra (see, e.g., \cite{A87}).

\medskip

\noi $i)$ The  orbit
$$
\cO(P_-)=\left\{  U \begin{pmatrix}   0  &   0  \\   0  &  \d1  \end{pmatrix} U^*  : U \in \UB \right\}
$$
is indeed isomorphic to an \textit{isotropic restricted Grassmannian}, which is a reductive homogeneous space of $\O_{\mathrm{res}}(\fH^\R)$ and  K\"ahler manifold that naturally shows up in the literature of loop groups (see \cite[Sec. 12.4]{PS}). Recalling the notation in the proof of Lemma \ref {elementary properties} $ii)$, we say that a (closed) subspace $W$ of $\fH_\C$ is maximal isotropic if $C(W)=W^\perp$, and we write $P_W$ for the orthogonal projection onto $W$.  Then, the isotropic restricted Grassmannian associated to $(\fH^\R,\fH_\C^-)$ is given by
$$
\cI_{\mathrm{res}}:=\{ W   : W \text{ is maximal isotropic}, \, P_W  -  P_{\fH_\C^-} \in \cB_2(\fH_\C) \}.
$$
The map $\cO(P_-) \to \cI_{\mathrm{res}}$, $UP_-U^* \mapsto \Ad_T(U)(\fH_\C^-)$, gives the identification between these homogeneous spaces.

\medskip

\noi $ii)$ The reductive structure induces a linear connection on every reductive homogeneous space (see \cite{KN93}, or in the infinite-dimensional case \cite{MR92}). For the orbit $\cO(P_-)$ the geodesics of this connection can be explicitly computed. For instance,  the geodesic starting at $P_-$ with velocity vector $[X,P_-]$, for $X=\begin{pmatrix}  0 & y \\  \bar{y} &  0  \end{pmatrix} \in \mathfrak{m}_{P_-}$, $y=-y^T \in \cB_2(\fH)$, is given by
$$\delta(t)=e^{tX} P_- e^{-tX} =\begin{pmatrix} \cos(|ty^*|) &  ty \mathrm{sinc}(|ty|)  \\  ty^* \mathrm{sinc}(|ty^*|)   &\cos(|ty|) 
\end{pmatrix}
\begin{pmatrix}
0 &  0  \\  0   &  \d1
\end{pmatrix}
\begin{pmatrix} \cos(|ty^*|) &     t \mathrm{sinc}(|ty^*|)y \\  t \mathrm{sinc}(|ty|)y^*   &\cos(|ty|) 
\end{pmatrix}.
$$ 
\end{rem}

\section{Embedded submanifolds}\label{submanifolds}

\subsection{Derivations induced by g1-pdms}

 We consider the following real Banach space: 
$$
i \ub= \left\{ \, \begin{pmatrix} x_1  &  x_2  \\   -\bar{x}_2  & - \bar{x}_1   \end{pmatrix} \in \cB(\fH\oplus \fH) :  x_1=x_1^*, \, x_2=-x_2^T \in  \cB_2(\fH)  \right\}\subseteq \cB_2(\fH\oplus\fH,D)
$$
equipped with the restricted norm $\| X \|_{\res}= 2\max\{ \|x_1\|, \, \|x_2\|_2 \}$. Noticing that each $\Gamma=\Gamma[\gamma, \alpha] \in \DF$ may be decomposed as 
$$
\Gamma=\begin{pmatrix}  \gamma  &  \alpha  \\  \alpha^*   &   \d1 -\bar{\gamma} \end{pmatrix}
= \begin{pmatrix}  \gamma  &  \alpha  \\  - \bar{\alpha}   &    -\bar{\gamma}\end{pmatrix} + \begin{pmatrix}  0   &   0  \\ 0  & \d1  \end{pmatrix}
:= \Gamma_0  + P_- , \, \, \, \, \, \Gamma_0 \in i \ub, 
$$
and since $\alpha \in \cB_2(\fH)$ by Remark \ref{d trace rem}, we have the inclusion $\cO(\Gamma) \subseteq i \ub+ P_-$.  It is then natural to ask for conditions on $\Gamma$ that characterize the homogeneous space $\cO(\Gamma)$ as an embedded submanifold of the affine space
$i \ub + P_-$.   We start with a necessary condition to be a submanifold according to Proposition \ref{crit immersion}, which consists in determining when tangent spaces  of the orbits are closed in the tangent space of the ambient manifold. Since  $\cO(\Gamma) \subseteq i \ub+ P_-$, the tangent space at $\Gamma$ satisfies the inclusion:
$$
T_\Gamma \cO(\Gamma) = \{   X\Gamma - \Gamma X    :   X \in \ub    \}  \subseteq i \ub.
$$
This lead us to introduce the following derivation.

\begin{fed}
Let $\Gamma \in \cD$. The \textit{derivation induced by $\Gamma$} is given by 
$$
\delta_\Gamma: \ub \to \ub , \, \, \, \, \delta_\Gamma(X)=[i\Gamma, X].
$$
\end{fed}
Observe $\Gamma=\Gamma_0 + P_-$, $\Gamma_0 \in i \ub$ and $X \in \ub$, then $[i\Gamma , X]=[i\Gamma_0,X] + [iP_- , X] \in \ub$. Hence the map $\delta_\Gamma$ actually takes values on $\ub$. It follows easily 
 that $\delta_\Gamma$ is continuous with $\| \delta_\Gamma\| \leq 2\| \Gamma \|_{\res}$, and it is a derivation of the Lie algebra $\ub$. Also notice   $i \ran(\delta_\Gamma)=T_\Gamma \cO(\Gamma)$. Thus, the tangent space $T_\Gamma \cO(\Gamma)$ is closed in $i\ub$ if and only if the derivation $\delta_\Gamma$ has closed range. 

We will use the following characterization of closed range linear maps in Banach spaces.

\begin{lem}\label{Banach closed range}
 Let $E$, $F$ be  Banach spaces and $T : E \to F$ a bounded linear map. Assume that there exists a closed subspace $\cM$ of $E$  such that $E=\mathrm{ker}(T) \oplus \cM$. Let $\cE$ be the continuous projection with $\ran(\cE)=\mathrm{ker}(T)$ and $\mathrm{ker}(\cE)=\cM$. 
Then, the range of $T$ is closed if and only if there exists $c>0$ such that $\| T e \| \geq c \|e - \cE e \|$, for all $e \in E$. 
\end{lem}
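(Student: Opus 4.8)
The plan is to reduce the statement to the classical fact that a bounded operator between Banach spaces has closed range if and only if it is bounded below modulo its kernel, and then transport this to the given complemented setting via the algebraic isomorphism $\cM \cong E/\ker(T)$ induced by $\cE$. First I would observe that since $E = \ker(T)\oplus\cM$ with $\cE$ the associated continuous projection onto $\ker(T)$, the restriction $T|_{\cM}:\cM\to F$ is a bounded injective linear map with $\ran(T|_{\cM}) = \ran(T)$, because for any $e\in E$ we have $Te = T(e-\cE e)$ and $e-\cE e\in\cM$. Thus $\ran(T)$ is closed in $F$ if and only if $\ran(T|_{\cM})$ is closed.

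Next I would invoke the standard characterization: an injective bounded operator $S:\cM\to F$ between Banach spaces has closed range if and only if it is \emph{bounded below}, i.e. there exists $c>0$ with $\|Sm\|\geq c\|m\|$ for all $m\in\cM$. The forward direction is an application of the open mapping theorem to $S:\cM\to\ran(S)$, which is a bijective bounded map onto a Banach space (closed subspace of $F$), hence a homeomorphism by the bounded inverse theorem, giving the lower bound with $c = \|S^{-1}\|^{-1}$. The converse is routine: if $\|Sm_n - f\|\to 0$ then $(m_n)$ is Cauchy by the lower bound, so $m_n\to m\in\cM$ and $f = Sm\in\ran(S)$.

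Finally I would translate the bounded-below condition for $S = T|_{\cM}$ back to the statement in the lemma. Applying $S$ to the element $m = e - \cE e\in\cM$ (for arbitrary $e\in E$) gives $\|Te\| = \|S(e-\cE e)\|\geq c\|e-\cE e\|$, which is precisely the displayed inequality; conversely, that inequality restricted to $e\in\cM$ (where $\cE e = 0$) gives exactly $\|Sm\|\geq c\|m\|$ for all $m\in\cM$. This closes the equivalence.

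I do not expect a serious obstacle here; the only point requiring a little care is checking that $\ran(T|_{\cM}) = \ran(T)$ and that $T|_{\cM}$ is injective, both of which follow immediately from the decomposition $E=\ker(T)\oplus\cM$ and the identity $Te = T(e-\cE e)$. The rest is a bookkeeping exercise combining the open mapping theorem with the completeness of $\cM$ as a closed subspace of the Banach space $E$.
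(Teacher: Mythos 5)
Your proposal is correct and follows essentially the same route as the paper's proof: restrict $T$ to the complement $\cM$, apply the open mapping (bounded inverse) theorem to the bijection onto $\ran(T)$ for the forward direction, and use the lower bound to show Cauchy-ness of $\{e_n-\cE e_n\}$ for the converse. The only cosmetic difference is that you isolate the ``bounded below iff closed range for injective operators'' fact as an explicit intermediate step, which the paper carries out inline.
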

\begin{proof}
If the range of $T$ is closed, then $T_0:=T|_{\cM}:\cM \to \ran(T)$ is a bijective linear map, whose inverse is continuous by the open mapping theorem. Therefore, we have  $\| T e \|=\|T_0(e-\cE e)\| \geq \|T_0\|^{-1} \|e-\cE e\|$, for all $e \in E$. 
Conversely, take a sequence $\{ Te_n\}_{n \geq 1}$ in $\ran(T)$ such that $Te_n \to f$. 
Then,  $\|Te_n - Te_m\| \geq \| T((e_n-e_m) - \cE(e_n - e_m))\| \geq c \|e_n - \cE e_n - (e_m - \cE e_m)\|$, which implies that $\{e_n - \cE e_n \}_{n \geq 1}$ converges to some $e_0\in \cM$. Since $Te_n=T(e_n - \cE e_n)$, we obtain $f=Te_0$. Hence $\ran(T)$ is closed.
\end{proof}

\begin{prop}\label{closed range vs finite spectrum0}
Let $\Gamma \in \DF$. Then tangent spaces of $\cO(\Gamma)$ are closed in $i \ub$ if and only if  $\sigma(\Gamma)$ is finite. If any of these conditions hold, then $i\ub=i \ubg \oplus T_\Gamma \cO(\Gamma)$.
\end{prop}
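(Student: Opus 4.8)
The plan is to use Lemma~\ref{Banach closed range} applied to $T=\delta_\Gamma$ (equivalently $\delta_\Gamma$ has closed range iff $T_\Gamma\cO(\Gamma)=i\ran(\delta_\Gamma)$ is closed in $i\ub$), together with the explicit block description of $\delta_\Gamma$ coming from the diagonalized form of $\Gamma$. As always we may conjugate by a Bogoliubov transformation, so assume $\Gamma=\Gamma[\Lambda,0]$ with $\Lambda=\sum_{i\ge 1}\lambda_i p_i$ as in Remark~\ref{proj ubog diag}, $\lambda_i\in(0,\tfrac12]$ distinct, $\lambda_0=0$, $p_0=$ projection onto $\ker\Lambda$. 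For $X=\begin{pmatrix}x_1&x_2\\ \bar x_2&\bar x_1\end{pmatrix}\in\ub$ one computes
\[
\delta_\Gamma(X)=[i\Gamma,X]=i\begin{pmatrix}\,[\Lambda,x_1]\, & \Lambda x_2+x_2(\d1-\Lambda)-x_2\\[2pt] \ast & \ast\end{pmatrix},
\]
so in terms of the spectral blocks $p_i\,\cdot\,p_j$ the map acts diagonally with eigenvalues $\lambda_i-\lambda_j$ on the $(i,j)$-block of $x_1$, and $\lambda_i+\lambda_j-1$ on the $(i,j)$-block of $x_2$. The kernel is exactly $\ubg$ (consistent with \eqref{ubg as a commutator}), and the complement $\mathfrak m_\Gamma=\ker(\cE_\Gamma)$ from Lemma~\ref{lemma Lie subg g1-pdms} is the span of the off-eigenspace blocks.

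\textbf{Sufficiency.} If $\sigma(\Gamma)$ is finite there are only finitely many $\lambda_i$, hence the nonzero numbers $\{\lambda_i-\lambda_j:\lambda_i\ne\lambda_j\}\cup\{\lambda_i+\lambda_j-1:\lambda_i+\lambda_j\ne 1\}$ form a finite set bounded away from $0$ by some $c>0$. For $X\in\mathfrak m_\Gamma$ this gives a blockwise estimate $\|\delta_\Gamma(X)\|_{\res}\ge c\,\|X\|_{\res}$; indeed the operator norm of $x_1$ and the Hilbert--Schmidt norm of $x_2$ are each controlled from below because rescaling each block by $|\lambda_i-\lambda_j|\ge c$ (resp.\ $|\lambda_i+\lambda_j-1|\ge c$) can only shrink these norms by at most a factor $c$ — here one uses that the blocks are mutually orthogonal in both the operator-norm and Hilbert--Schmidt senses (for the operator norm, $\|x_1\|=\sup_{i}\|p_ix_1\|$ with the row blocks having orthogonal ranges, and similarly $\|x_1 p_j\|=\sup_i\|p_ix_1p_j\|$ for the finitely many column indices, and likewise for $x_2$ replacing column index $j$ by finitely many). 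By Lemma~\ref{Banach closed range} with $E=\ub$, $\cM=\mathfrak m_\Gamma$, $\cE=\cE_\Gamma$, the range of $\delta_\Gamma$ is closed, hence $T_\Gamma\cO(\Gamma)$ is closed in $i\ub$. Moreover $\ub=\ubg\oplus\mathfrak m_\Gamma$ and $\delta_\Gamma$ maps $\mathfrak m_\Gamma$ bijectively onto $\ran(\delta_\Gamma)$, so applying the complex unit $i$ yields $i\ub=i\ubg\oplus T_\Gamma\cO(\Gamma)$.

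\textbf{Necessity.} If $\sigma(\Gamma)$ is infinite, then $r=\infty$ and the eigenvalues $\lambda_i$ accumulate (necessarily at $0$, by $\sum m_i\lambda_i<\infty$). The plan is to produce a sequence in $\mathfrak m_\Gamma$ on which $\delta_\Gamma$ contracts without the quotient norm going to $0$, contradicting the estimate in Lemma~\ref{Banach closed range}. Concretely: pick unit vectors $\varphi^{(i)}\in\ran(p_i)$, $\psi\in\ran(p_0)$ (using $\dim\ran p_0=m_0$; if $m_0=0$ one instead pairs two sequences $\lambda_i,\lambda_j\to 0$ with $\lambda_i\ne\lambda_j$, which exist since infinitely many distinct eigenvalues accumulate). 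Along a subsequence with $\lambda_{i_n}\to 0$, let $x_1^{(n)}$ be the rank-two skew-adjoint block supported on $\operatorname{span}\{\varphi^{(i_n)},\psi\}$ with $\|x_1^{(n)}\|=1$ and $x_2^{(n)}=0$. Then $X_n\in\mathfrak m_\Gamma$, $\|X_n\|_{\res}=2$, while $\|\delta_\Gamma(X_n)\|_{\res}=2|\lambda_{i_n}-\lambda_0|=2\lambda_{i_n}\to 0$; since $X_n=X_n-\cE_\Gamma X_n$, the inequality $\|\delta_\Gamma(X_n)\|\ge c\|X_n-\cE_\Gamma X_n\|$ fails for every $c>0$. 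Hence $\delta_\Gamma$ does not have closed range, so $T_\Gamma\cO(\Gamma)$ is not closed in $i\ub$. The main obstacle I anticipate is the bookkeeping in the sufficiency estimate: verifying that passing to the mutually orthogonal spectral blocks really does let one read off the operator and Hilbert--Schmidt norms blockwise so that the scalar lower bound $c$ on $|\lambda_i-\lambda_j|$ and $|\lambda_i+\lambda_j-1|$ transfers to a genuine bound $\|\delta_\Gamma X\|_{\res}\ge c\|X\|_{\res}$ on all of $\mathfrak m_\Gamma$ at once, rather than block by block; but finiteness of $\sigma(\Gamma)$ makes the index set finite, which is exactly what resolves this.
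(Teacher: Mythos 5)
Your proposal is correct, and the sufficiency direction is essentially the paper's argument (reduce to a diagonal $\Gamma$, decompose $\delta_\Gamma$ into its action on the spectral blocks $p_i\,\cdot\,p_j$, and use that with finitely many blocks the map is boundedly invertible off its kernel, so that $\ran(\delta_\Gamma)=\ker(\cE_\Gamma)=\mathfrak m_\Gamma$ is closed and complements $\ubg$). The necessity direction, however, is a genuinely different route. The paper feeds the closed-range inequality of Lemma~\ref{Banach closed range} to the restriction of $\delta_\Gamma$ to diagonal skew-adjoint operators, concludes that the ordinary derivation $x\mapsto[i\Lambda,x]$ on $\cB(\fH)$ has closed range, and then invokes the Anderson--Foia\c s theorem \cite[Thm.~3.3]{AF75} characterizing closed-range normal derivations by finiteness of the spectrum. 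You instead build an explicit almost-null sequence: rank-two skew-adjoint blocks coupling an eigenspace of $\lambda_{i_n}\to 0$ with $\ker\Lambda$ (or with another small eigenvalue when $\ker\Lambda=\{0\}$), which lie in $\mathfrak m_\Gamma$, have restricted norm $2$, and are contracted by $\delta_\Gamma$ to norm $2\lambda_{i_n}\to 0$. This is more elementary and self-contained (no external closed-range theorem), at the cost of being specific to the trace-class situation where the eigenvalues must accumulate at $0$; the paper's reduction would work for any normal $\Lambda$ with infinite spectrum. Two small imprecisions you should repair: first, the equality $\|x_1\|=\sup_i\|p_i x_1\|$ is false in general (operators with mutually orthogonal ranges satisfy only $\sup_i\|A_i\|\le\|\sum_i A_i\|\le(\sum_i\|A_i\|^2)^{1/2}$), so the blockwise lower bound holds only with a constant depending on the number of blocks --- the clean statement is the paper's, with constant $(\sum_{i\ne j}|\lambda_i-\lambda_j|^{-1})^{-1}$ obtained by bounding the blockwise inverse via the triangle inequality; since only some $c>0$ is needed and the index set is finite, this does not affect the conclusion. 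Second, for the final assertion $i\ub=i\ubg\oplus T_\Gamma\cO(\Gamma)$ you need not merely that $\delta_\Gamma|_{\mathfrak m_\Gamma}$ is injective with closed range but that $\ran(\delta_\Gamma)=\mathfrak m_\Gamma$; this does follow from your setup because the finite blockwise inverse is bounded and surjects onto $\mathfrak m_\Gamma$, but it deserves a sentence.
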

\begin{proof}
All tangent spaces  are closed if and only the tangent space at some $\Gamma$ is closed. Hence we can assume that  $\Gamma=\Gamma[\Lambda, 0] \in \cD$ is diagonal, and study when the range of the corresponding derivation $\delta_\Gamma$ is closed. 

 Now suppose that $\sigma(\Gamma)$ is finite. According to Remark \ref{proj ubog diag}, this means that $\sigma(\Gamma)=\sigma( \Lambda) \cup \sigma(\d1 - \Lambda)= \{ \lambda_k : k=0, \ldots , r   \} \cup \{ 1 -\lambda_k : k=0, \ldots , r  \}$, for some $r< \infty$ ($\lambda_0=0$). Using the projections $\{ p_i\}_{i=0}^r$ in the mentioned remark: for $X=\begin{pmatrix}  x_1   &   x_2   \\   \bar{x}_2    &   \bar{x}_1 \end{pmatrix} \in \ub$, notice that
$$
[i\Lambda, x_1]  = \displaystyle{\sum_{\substack{i,j=0}}^r} i (\lambda_i -\lambda_j ) p_i x_1 p_j, \, \, \, \,
 i\Lambda x_2 -  x_2 i(\d1 - \Lambda) =  \sum_{i , j=0}^r i (\lambda_i +\lambda_j - 1 ) p_i x_2 p_j .
$$
Observe that $p_i [\Lambda, x_1] p_i=0$, for all $i=0, \ldots , r$, and when $\frac{1}{2}\in \sigma(\Gamma)$, $p_1(  \Lambda x_{2} -  x_{2}(\d1 - \Lambda))p_1=0$. 
If $\frac{1}{2} \notin \sigma(\Gamma)$, then  the  coefficients of the preceding sums satisfy    $\lambda_i - \lambda_j\neq 0 $, $i, j=0, \ldots, r$, $i\neq j$, and $\lambda_i + \lambda_j -1\neq 0$, $i, j=0, \ldots, r$. Thus, the range of $\delta_\Gamma$ can be expressed as
\begin{equation}\label{tangent space as range case 1}
\ran(\delta_\Gamma)={\small \left\{ 
\begin{pmatrix}    \displaystyle{\sum_{\substack{i,j=0\\ i \neq j}}^r } p_j x_1 p_i  &    x_2  \\
 \bar{x}_2   &   \displaystyle{\sum_{\substack{i,j=0\\ i \neq j}}^r  p_j \bar{x}_1 p_i}   \end{pmatrix} 
:  x_1 = -x_1^* \in \cB(\fH), \, x_2=-x_2^T \in \cB_2(\fH) \right\}}.
\end{equation}
In the case where $\frac{1}{2} \in \sigma(\Gamma)$,  the above expression should be modified: 
\begin{equation}\label{tangent space as range case 2}
\ran(\delta_\Gamma)={\small \left\{ 
\begin{pmatrix}    \displaystyle{\sum_{\substack{i,j=0\\ i \neq j}}^r } p_j x_1 p_i  &   \displaystyle{\sum_{\substack{i,j=0\\ (i ,j)\neq (1,1)}}^r } p_j x_2 p_i \\
\displaystyle{\sum_{\substack{i,j=0\\ (i,j)\neq (1,1)}}^r } p_j \bar{x}_2 p_i &   \displaystyle{\sum_{\substack{i,j=0\\ i \neq j}}^r p_j \bar{x}_1 p_i}   \end{pmatrix} 
:  x_1 = - x_1^* \in \cB(\fH), \, x_2=-x_2^T \in \cB_2(\fH) \right\}}.
\end{equation}
In both cases, $\ran(\delta_\Gamma)$ is clearly a closed subspace of $ \ub$.

For the converse we use the  characterization in Lemma \ref{Banach closed range}. If we assume that $\ran(\delta_\Gamma)$ is closed in $\ub$, then there is a constant $c>0$ satisfying 
\begin{equation}\label{closed range00}
\| \delta_\Gamma (X)\|_{\res} \geq c  \, \| X - \cE_\Gamma(X)  \|_{\res},  \, \, \, \, X \in \ub.
\end{equation}
Here $\cE_\Gamma$  is the conditional expectation with  $\ran(\cE_\Gamma)=\ker(\delta_\Gamma)=\ubg$ and   $\ker(\cE_\Gamma)=\mathfrak{m}_\Gamma$ is the supplement  defined in the proof of Lemma \ref{lemma Lie subg g1-pdms}. Let  $\cB(\fH)_{\mathrm{sk}}$ denote the real space of  bounded linear  skew-adjoint operators on $\fH$. Consider the derivation $\delta_\Lambda^{\mathrm{sk}}:=\delta_\Lambda|_{\cB(\fH)_{\mathrm{sk}}}: \cB(\fH)_{\mathrm{sk}} \to \cB(\fH)_{\mathrm{sk}}$, $\delta_\Lambda^{\mathrm{sk}}(x)=[i\Lambda,x]$. 
Then take a operators $X=\diag(x, \bar{x})$, $x^*=-x$, in \eqref{closed range00} to find that
$\| \delta_\Lambda^{\mathrm{sk}}(x) \|   \geq c \| x- \cE_\Lambda(x)\|$,  for all $x=-x^*$,
where  $\cE_\Lambda$ is the continuous projection such that $\ran(\cE_\Lambda)=\ker(\delta_\Lambda^{\mathrm{sk}})$ and  $\ker(\cE_\Lambda)=(\mathfrak{m}_\Lambda)_{\mathrm{sk}}:=\{ x \in \cB(\fH)_\mathrm{sk} :  x=x  - \sum_{i=0}^r p_i x_1 p_i  \}$, 
which is a closed supplement of $\ker(\delta_\Lambda^{\mathrm{sk}})$. 
  This shows that $\delta_\Lambda^{\mathrm{sk}}$  has closed range, which implies that $\delta_\Lambda: \cB(\fH) \to \cB(\fH)$, $\delta_\Lambda(x)=[i \Lambda, x]$ also has closed range since $\Lambda^*=\Lambda$. This is equivalent to have that $\sigma(\Lambda)$ is finite by \cite[Thm. 3.3]{AF75}, which means that $\sigma(\Gamma)$ is finite.

Finally, the expressions in \eqref{tangent space as range case 1} and \eqref{tangent space as range case 2} imply that, under the assumption that $\sigma(\Gamma)$ is finite, we have $\ran(\delta_\Gamma)=\ker(\cE_\Gamma)$. Hence $\ub=\ran(\cE_\Gamma)   \oplus \ker(\cE_\Gamma)=\ubg \oplus \ran(\delta_\Gamma)$, o equivalently,
  $i\ub=i \ubg \oplus T_\Gamma \cO(\Gamma)$.
\end{proof}

\begin{rem}
Furthermore, we will show later that $\Tr(X\delta_\Gamma(Y))=0$, for all $Y\in \ub$, if and only if $X\in \ubg$  (see Lemma \ref{lema 2 symplectic}). 
\end{rem}

\subsection{Embedded submanifod structure of orbits of g1-pdms}

Now we consider another necessary condition to be an embedded submanifold. Since we know that $\cO(\Gamma) \subseteq i \ub + P_-$, the orbits can be endowed with the relative topology inherited from the affine space $i \ub + P_-$. This topology is defined by the metric $d(\Gamma_0, \Gamma_1)=\| \Gamma_0 - \Gamma_1\|_{\res}$, for $\Gamma_0 , \, \Gamma_1 \in \cO(\Gamma)$. On the other hand, $\cO(\Gamma) \cong \UB / \UB^\Gamma$ 
is a homogeneous space of $\UB$, so that the orbits can be endowed with the quotient topology. Let us write $\tau_r$ and $\tau_q$ for the relative and  quotient topology, respectively. Both topologies must coincide when the orbits are embedded submanifolds of $i \ub + P_-$.  We will see that the coincidence of these topologies is again related to the finite spectrum condition.

Recall that the map $\pi_\Gamma:\UB \to \cO(\Gamma)$, $\pi_\Gamma(U)=U\Gamma U^*$ is a submersion when $\cO(\Gamma)$ is considered as a homogeneous space, which in particular implies that $\pi_\Gamma$ has  continuous local cross sections. From this fact, it follows that $\tau_q$ is stronger than $\tau_r$. Then, both topologies coincide if and only if  the map $\pi_\Gamma: \UB \to \cO(\Gamma)$, $\pi_\Gamma(U)=U\Gamma U^*$ admits continuous local cross sections when $\cO(\Gamma)$ is considered with the topology $\tau_r$.  To construct such a local cross section we essentially follow the argument in \cite[Thm. 4.4]{AL10}, but it needs some additional work because the restricted norm is not isometric by multiplication with Bogoliubov transformations.

\begin{rem}
We first need to extend the derivation $\delta_\Gamma$ and the conditional expectation $\cE_\Gamma$.

\medskip

\noi $i)$ Given $\Gamma \in \cD$, set 
$$
\tde: \cB_2(\fH\oplus \fH, D)  \to  \cB_2(\fH\oplus \fH, D), \, \, \, \, \tde(X)=[i\Gamma, X].
$$
It is straightforward to check that $\tde$ is  a continuous derivation of the Lie algebra $\cB_2(\fH \oplus \fH, D)$.  For $\Gamma=\Gamma[\Lambda, 0]\in \cD$ a diagonal g1-pdm and using the same notation of  Definition  \ref{cond exp} for the spectral decomposition of $\Lambda$ ($1 \leq r \leq \infty$), we define the linear projection 
$$\tilde{\cE}_\Gamma: \cB_2(\fH \oplus \fH, D) \to   \cB_2(\fH \oplus \fH, D),$$
\begin{equation*}
\tilde{\cE}_\Gamma\left(   \begin{pmatrix}  x_{11}  &   x_{12}   \\   x_{21}  &   x_{22} \end{pmatrix}   \right)  =  
\begin{cases}
      \begin{pmatrix} \sum_{i=0}^r p_i x_{11} p_i   &   0  \\
0  &  \sum_{i=0}^r p_i x_{22} p_i 
\end{pmatrix} &   \text{ if } \frac{1}{2}  \notin \sigma(\Gamma),\\
      \begin{pmatrix} \sum_{i=0}^r p_i x_{11} p_i   &   p_1 x_{12} p_1  \\
p_1 x_{21} p_1  &  \sum_{i=0}^r p_i x_{22} p_i 
\end{pmatrix} &  \text{ if } \frac{1}{2}  \in \sigma(\Gamma).
					\end{cases}       
\end{equation*}
We fix the following notation
$
\tilde{\mathfrak{m}}_\Gamma =\ker(\tilde{\cE}_\Gamma).
$

\medskip

\noi $iii)$ We denote by  $\delta_\Gamma^0$ and $\cE_\Gamma^0$ for the extensions in the obvious way of $\delta_\Gamma$ and $\cE_\Gamma$ to the whole $\cB(\fH\oplus \fH)$. We also put $\mathfrak{m}_\Gamma^0=\ker(\delta_\Gamma^0)$.
\end{rem}

One can show by similar arguments to that of Proposition \ref{closed range vs finite spectrum0} that the above derivations $\tde$ and $\delta_\Gamma^0$ also have closed range if and only if 
$\sigma(\Gamma)$ is finite. Rather than proving this fact, we focus  on estimate the constant given by the characterization in Lemma \ref{Banach closed range} in terms of the eigenvalues of $\Gamma$. 
This will be helpful  later for the construction of  continuous local cross sections.  

\begin{lem}\label{constants two types}
Let $\Gamma=\Gamma[\Lambda,0] \in \DF$ be a diagonal g1-pdm with finite spectrum. Then the following assertions hold:
\begin{itemize}
\item[i)] For every $X \in \cB_2(\fH \oplus \fH, D)$, we have $
\|   \tde(X) \|_{\res} \geq \tilde{c}_\Gamma \| X   - \tilde{\cE}_\Gamma (X)\|_{\res}$, 
where the constant $\tilde{c}_\Gamma$ can be taken as
\begin{equation*}
\tilde{c}_\Gamma =    
\begin{cases}
      \min \{   (\sum_{i\neq j}|\lambda_i - \lambda_j|^{-1})^{-1} , (\sum_{i,j=0}^r| \lambda_i + \lambda_j -1|^{-1})^{-1}  \} &   \text{ if } \frac{1}{2}  \notin \sigma(\Gamma),\\
      \min \{   (\sum_{i\neq j}|\lambda_i - \lambda_j|^{-1})^{-1} , (\sum_{i,j=0, (i,j)\neq (1,1)}^r| \lambda_i + \lambda_j -1|^{-1})^{-1}  \} &  \text{ if } \frac{1}{2}  \in \sigma(\Gamma).
			\end{cases}       
\end{equation*}
\item[ii)]  For every $X \in \cB(\fH \oplus \fH)$, we have $
\|   \delta_\Gamma^0(X) \| \geq \tilde{c}_\Gamma^0 \| X   - \tilde{\cE}_\Gamma^0 (X)\|$, where
\begin{equation*}
c_\Gamma^0 =    
\begin{cases}
       \frac{1}{2}\left(\sum_{i\neq j}|\lambda_i - \lambda_j|^{-1} +   \sum_{i,j=0}^r| \lambda_i + \lambda_j -1|^{-1}\right)^{-1}&   \text{ if } \frac{1}{2}  \notin \sigma(\Gamma),\\
        \frac{1}{2}\left(\sum_{i\neq j}|\lambda_i - \lambda_j|^{-1} +   \sum_{i,j=0, (i,j)\neq (1,1)}^r| \lambda_i + \lambda_j -1|^{-1}\right)^{-1} &  \text{ if } \frac{1}{2}  \in \sigma(\Gamma).
			\end{cases}       
\end{equation*}
\end{itemize}
\end{lem}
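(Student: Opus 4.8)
The plan is to reduce the problem to explicit computations with the spectral projections $\{p_i\}_{i=0}^r$, exactly as in the proof of Proposition \ref{closed range vs finite spectrum0}, and to track the norms carefully so as to obtain the claimed constants. Since $\Gamma=\Gamma[\Lambda,0]$ is diagonal with finite spectrum, for $X$ written in block form with entries $x_{11},x_{12},x_{21},x_{22}$ the derivation $\tde$ acts ``blockwise'' as $p_j \tde(X) p_i$ having coefficient $i(\lambda_i-\lambda_j)$ on the diagonal blocks and $i(\lambda_i+\lambda_j-1)$ on the off-diagonal blocks (here one uses that the $(2,2)$-entry pairs with $\d1-\Lambda$). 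The kernel of $\tde$ is precisely $\ran(\tilde{\cE}_\Gamma)$ and $\tilde{\mathfrak m}_\Gamma=\ker(\tilde{\cE}_\Gamma)$ is spanned by the off-kernel blocks, so $X-\tilde{\cE}_\Gamma(X)$ is the sum of those blocks where the relevant coefficient is nonzero.

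For part $i)$, I would argue entrywise. Fix $X\in\cB_2(\fH\oplus\fH,D)$ and set $Y=\tde(X)$. On each nonzero block one can invert: $p_j X p_i = (i(\lambda_i-\lambda_j))^{-1} p_j Y p_i$ (resp.\ with $\lambda_i+\lambda_j-1$), so that, summing over the finitely many relevant index pairs and using the triangle inequality for the operator norm of $x_{11}$ (after reassembling $\sum_{i\neq j}p_j x_{11} p_i$ from its blocks) and for the Hilbert--Schmidt norm of $x_{12}$, one gets $\|x_{11}-\cdots\|\le\big(\sum_{i\neq j}|\lambda_i-\lambda_j|^{-1}\big)\|Y_{11}\|$ and similarly for the other three entries. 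Taking the definition $\|Z\|_{\res}=2\max\{\|z_{11}\|,\|z_{22}\|,\|z_{12}\|_2,\|z_{21}\|_2\}$ into account, the $2$'s cancel and the maximum over the four entries produces exactly the minimum of the two reciprocal sums displayed in the statement, with the $(1,1)$ index pair excluded from the second sum precisely when $\tfrac12\in\sigma(\Gamma)$ (because then that block lies in $\ker\tde$). The key point making this clean is that multiplication by the mutually orthogonal projections $p_i$ does not increase either the operator or the Hilbert--Schmidt norm, so passing between a block matrix and its reassembled entry is norm-nonincreasing.

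For part $ii)$ the only change is that $\cB(\fH\oplus\fH)$ carries the \emph{operator} norm rather than the restricted norm, and $\delta_\Gamma^0$, $\cE_\Gamma^0$ (whose off-kernel part I would denote consistently with the $\tilde{\cdot}$ notation) act the same way blockwise. Here one cannot separate the four entries: a single operator-norm estimate must absorb all the nonzero coefficients simultaneously. Writing $X-\tilde{\cE}_\Gamma^0(X)=\sum_{(i,j)\in S}p_j X p_i$ over the full set $S$ of index pairs with nonzero coefficient (now including the cross pairs between the $(1,2)$ and $(2,1)$ corners), and inverting each block, the worst case is controlled by $\big(\sum_{(i,j)\in S}|c_{ij}|^{-1}\big)$ where $c_{ij}$ is the corresponding $\lambda_i-\lambda_j$ or $\lambda_i+\lambda_j-1$; the factor $\tfrac12$ in $c_\Gamma^0$ comes from the fact that $S$ is symmetric (the two corners contribute the $|\lambda_i+\lambda_j-1|^{-1}$ sum, the two diagonal corners the $|\lambda_i-\lambda_j|^{-1}$ sum) so the total splits as the sum of the two reciprocal sums, and one loses a factor $2$ when passing from a crude block-by-block triangle estimate back to the full operator norm. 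I expect the main obstacle to be bookkeeping: getting the index sets exactly right in the $\tfrac12\in\sigma(\Gamma)$ case and justifying the precise constant (in particular the $\tfrac12$ in part $ii)$) rather than merely some constant, since the coincidence with the stated formula depends on how one organizes the triangle-inequality estimate over blocks. Everything else is the same mechanism already used in Proposition \ref{closed range vs finite spectrum0} together with Lemma \ref{Banach closed range}.
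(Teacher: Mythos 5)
Your proposal is correct and follows essentially the same route as the paper: the paper simply packages your block-by-block inversion as an explicit operator $\beta_\Gamma$ satisfying $\beta_\Gamma\circ\tde=\tde\circ\beta_\Gamma=\d1-\tilde{\cE}_\Gamma$ and then bounds $\|\beta_\Gamma\|$ by the triangle inequality over the finitely many blocks, which is exactly your computation. Your accounting for the constants checks out, including the factor $\tfrac12$ in part $ii)$, which does arise from summing the four corner contributions in the operator norm (two corners per reciprocal sum), just as you describe.
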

\begin{proof}
$i)$ For $X=\begin{pmatrix}  x_{11}   &   x_{12}   \\   x_{21}    &   x_{22} \end{pmatrix} \in \cB_2(\fH \oplus \fH, D)$, the block operator entries of $\tilde{\delta}_\Gamma(X)=[i\Gamma,X]$ are given by
\begin{align*}
& [i\Lambda, x_{11}]  = \displaystyle{\sum_{\substack{i,j=0}}^r} i (\lambda_i -\lambda_j ) p_i x_{11} p_j \, , \, \, \, \, \, \, 
  i\Lambda x_{12} -  x_{12}i(\d1 - \Lambda) =  \sum_{i , j=0}^r i (\lambda_i +\lambda_j - 1 ) p_i x_{12} p_j \\
& i(\d1 - \Lambda)x_{21}- x_{21}i \Lambda   =  \sum_{i , j=0}^r     i(1-\lambda_i -\lambda_j  )  p_j x_{21} p_i    \, , \, \, \, \, \, \, 
[i(\d1 -\Lambda),x_{22}]  = \displaystyle{\sum_{\substack{i,j=0}}^r} i (\lambda_j -\lambda_i ) p_i x_{22} p_j \, .
\end{align*}
Motivated by these formulas, for each $x \in \cB(\fH)$, when $\frac{1}{2}\notin \sigma(\Gamma)$, set
$$
A_{x}=-\displaystyle{\sum_{\substack{i,j=0\\ i \neq j}}^r} i (\lambda_i -\lambda_j )^{-1} p_i x p_j , \, \, \, \, 
B_{x}=-   \sum_{i , j=0}^r i(\lambda_i  +\lambda_j -1 )^{-1} p_i x p_j  .
$$
In the case where $\frac{1}{2}\in \sigma(\Gamma)$, the definition of $A_x$ does not change, and for $B_x$ we now  set
$$
B_{x}=-\displaystyle{   \sum_{\substack{i , j=0 \\ (i ,j)\neq (1,1)}}^r}  i(\lambda_i  +\lambda_j -1 )^{-1} p_i x p_j .
$$
In both cases define $\beta_\Gamma : \cB_2(\fH\oplus \fH, D) \to  \cB_2(\fH \oplus \fH,D)$, 
$$
\beta_\Gamma\left(\begin{pmatrix}  x_{11}   &   x_{12}   \\   x_{21}    &   x_{22} \end{pmatrix}\right)=\begin{pmatrix}
A_{x_{11}}  &      B_{x_{12}} \\   -B_{x_{21}}   &    - A_{x_{22}}
\end{pmatrix}.
$$ 
This is a bounded linear operator acting on $\cB_2(\fH\oplus \fH,D)$ satisfying $\beta_\Gamma \circ \tde=\tde \circ \beta_\Gamma=\d1 - \tilde{\cE}_\Gamma$ (here $\d1=\d1_{\cB_2(\fH \oplus \fH, D)}$).  
Therefore, $\|  X   - \tilde{\cE}_\Gamma (X) \|_{\res} =\| (\beta_\Gamma \circ \tde)(X)\|_{\res}\leq \| \beta_\Gamma \| \| \tde(X)\|_{\res}$, which yields $\| \tde(X) \|_{\res} \geq \| \beta_\Gamma \|^{-1} \|  X   - \tilde{\cE}_\Gamma (X) \|_{\res}$ for all $X$. If $\frac{1}{2} \notin \sigma(\Gamma)$, then one can see $\| \beta_\Gamma\|\leq \max \{   \sum_{i\neq j}|\lambda_i - \lambda_j|^{-1} , \sum_{i,j=0}^r| \lambda_i + \lambda_j -1|^{-1} \} $. This implies that we can take
 $\tilde{c}_\Gamma=\min \{   (\sum_{i\neq j}|\lambda_i - \lambda_j|^{-1})^{-1} , (\sum_{i,j=0}^r| \lambda_i + \lambda_j -1|^{-1})^{-1}  \}.$ 
For the case where $\frac{1}{2} \in \sigma(\Gamma)$ we only need to write $\sum_{i,j=0, (i,j)\neq (1,1)}^r| \lambda_i + \lambda_j -1|^{-1}$ instead of $\sum_{i,j=0}^r| \lambda_i + \lambda_j -1|^{-1}$.

\medskip

\noi $ii)$ Similar arguments apply for the extensions $\delta_\Gamma^0$ and $\cE_\Gamma^0$ when one takes the operator norm $\|  \, \cdot \,\|$ instead of the restricted norm $\|  \, \cdot \, \|_{\res}$. 
\end{proof}

In general, the restricted norm of a Bogoliubov transformation can have arbitrary large restricted norm. In the following lemma, we control the restricted norm  of Bogoliubov transformations acting on g1-pdms that are close enough in the relative topology.

\begin{lem}\label{technical lemma 00}
Suppose that $\Gamma=\Gamma[\Lambda,0] \in \DF$ is a diagonal g1-pdm with finite spectrum. Then there exists a constant $K > 0$, depending only on $\Gamma$, such that $\|U\|_{\res} \leq K$, for all $U \in \UB$ satisfying $\| U\Gamma U^* - \Gamma \|_{\res} \leq \frac{c_\Gamma^0}{3}$.
\end{lem}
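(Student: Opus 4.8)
The plan is to estimate $\|U\|_{\res}$ by splitting it through the conditional expectation, $\|U\|_{\res}\le\|\tilde{\cE}_\Gamma(U)\|_{\res}+\|U-\tilde{\cE}_\Gamma(U)\|_{\res}$, where $\tilde{\cE}_\Gamma$ is the projection introduced just before Lemma \ref{constants two types}. The first summand is harmless, since I will bound it by a constant depending only on $\Gamma$, so everything reduces to controlling $\|U-\tilde{\cE}_\Gamma(U)\|_{\res}$. The starting point is the identity, valid in the Banach $*$-algebra $\cB_2(\fH \oplus \fH, D)$ (which contains $\UB$, and every $\Gamma\in\cD$ since $\Gamma=\Gamma_0+P_-$ with $\Gamma_0\in i\ub$): putting $Z:=U\Gamma U^*-\Gamma$ we have $ZU=U\Gamma U^*U-\Gamma U=U\Gamma-\Gamma U$, hence
$$
\tilde{\delta}_\Gamma(U)=[i\Gamma,U]=-iZU,\qquad\text{so}\qquad\|\tilde{\delta}_\Gamma(U)\|_{\res}=\|ZU\|_{\res}.
$$
Here the difficulty noted just before the statement intervenes: the naive bound $\|ZU\|_{\res}\le\|Z\|_{\res}\,\|U\|_{\res}$ reintroduces $\|U\|_{\res}$ and is useless, since $\|\cdot\|_{\res}$ is not invariant under left or right multiplication by Bogoliubov transformations. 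The cure will be a bootstrap.

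First I would apply Lemma \ref{constants two types}(i) with $X=U$, obtaining $\tilde{c}_\Gamma\,\|U-\tilde{\cE}_\Gamma(U)\|_{\res}\le\|\tilde{\delta}_\Gamma(U)\|_{\res}=\|ZU\|_{\res}$. Then I would bound $\|\tilde{\cE}_\Gamma(U)\|_{\res}$ from its explicit formula: writing $U=\left(\begin{smallmatrix} u & v \\ \bar v & \bar u\end{smallmatrix}\right)$, the diagonal blocks of $\tilde{\cE}_\Gamma(U)$ are the pinchings $\sum_i p_iup_i$ and $\sum_i p_i\bar up_i$, of operator norm $\le\|u\|\le1$, while (only when $\tfrac12\in\sigma(\Gamma)$) the off-diagonal blocks are $p_1vp_1$ and $p_1\bar vp_1$, of Hilbert--Schmidt norm $\le\sqrt{m_1}\,\|v\|\le\sqrt{m_1}$ because $p_1$ has finite rank $m_1$; hence $\|\tilde{\cE}_\Gamma(U)\|_{\res}\le C_\Gamma:=2\max\{1,\sqrt{m_1}\}$, a constant depending only on $\Gamma$. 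Writing $ZU=Z\,\tilde{\cE}_\Gamma(U)+Z\,(U-\tilde{\cE}_\Gamma(U))$ and using submultiplicativity of $\|\cdot\|_{\res}$ on $\cB_2(\fH \oplus \fH, D)$,
$$
\tilde{c}_\Gamma\,\|U-\tilde{\cE}_\Gamma(U)\|_{\res}\le\|Z\|_{\res}\bigl(C_\Gamma+\|U-\tilde{\cE}_\Gamma(U)\|_{\res}\bigr),
$$
so that $(\tilde{c}_\Gamma-\|Z\|_{\res})\,\|U-\tilde{\cE}_\Gamma(U)\|_{\res}\le C_\Gamma\,\|Z\|_{\res}$.

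To close the bootstrap I need $\|Z\|_{\res}$ a definite fraction below $\tilde{c}_\Gamma$, which is exactly where the hypothesis enters. Comparing the explicit constants in Lemma \ref{constants two types}: with $S_1,S_2$ denoting the two eigenvalue sums occurring there, one has $\tilde{c}_\Gamma=(\max\{S_1,S_2\})^{-1}$ and $c_\Gamma^0=\tfrac12(S_1+S_2)^{-1}$, hence $c_\Gamma^0\le\tfrac12\tilde{c}_\Gamma$ since $\max\{S_1,S_2\}\le S_1+S_2$. Therefore $\|Z\|_{\res}=\|U\Gamma U^*-\Gamma\|_{\res}\le c_\Gamma^0/3$ gives $\|Z\|_{\res}\le\tilde{c}_\Gamma/6$, so $\tilde{c}_\Gamma-\|Z\|_{\res}\ge\tfrac56\tilde{c}_\Gamma>0$ and $\|U-\tilde{\cE}_\Gamma(U)\|_{\res}\le\frac{C_\Gamma\,\|Z\|_{\res}}{\tilde{c}_\Gamma-\|Z\|_{\res}}\le\frac{C_\Gamma}{5}$. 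Combining with the split, $\|U\|_{\res}\le C_\Gamma+\tfrac15 C_\Gamma$, so the lemma holds with $K:=\tfrac65 C_\Gamma$, which depends only on $\Gamma$. The one genuinely delicate point is the one already flagged, that $\|ZU\|_{\res}$ cannot be estimated directly; it is handled by peeling off $\tilde{\cE}_\Gamma(U)$, whose restricted norm is a priori controlled because $p_1$ has finite rank, and then absorbing the leftover term $\|Z\|_{\res}\,\|U-\tilde{\cE}_\Gamma(U)\|_{\res}$ onto the left-hand side, which is legitimate precisely because the hypothesis forces $\|Z\|_{\res}<\tilde{c}_\Gamma$ with a uniform gap. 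Everything else is routine bookkeeping with the Banach-algebra norm $\|\cdot\|_{\res}$.
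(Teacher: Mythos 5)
Your argument is correct, but it follows a genuinely different route from the paper's. The paper never touches the conditional expectation of $U$ itself: it observes that unitarity already gives $\|u\|\le 1$, reduces everything to $\|p_0vp_0\|_2$ (since $\d1-p_0$ has finite rank when the spectrum is finite), reads off the $(1,2)$ block of $U\Gamma U^*-\Gamma$ to bound $\|p_0vp_0\bar u^*\|_2$, and then removes the factor $\bar u^*$ via the identity $\bar u^*\bar u=\d1-\bar v^*\bar v$, using the \emph{operator-norm} inequality of Lemma \ref{constants two types}~$ii)$ only to force $\|vp_0\|\le\tfrac23$ so that $1-\|p_0v^*vp_0\|$ stays bounded below. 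You instead work entirely at the Banach-algebra level: the identity $\tilde{\delta}_\Gamma(U)=-i(U\Gamma U^*-\Gamma)U$, the restricted-norm inequality of Lemma \ref{constants two types}~$i)$, the a priori bound $\|\tilde{\cE}_\Gamma(U)\|_{\res}\le 2\max\{1,\sqrt{m_1}\}$ (which is where the finite rank of $p_1$ enters for you, in place of the finite rank of $\d1-p_0$ for the paper), and an absorption of $\|Z\|_{\res}\,\|U-\tilde{\cE}_\Gamma(U)\|_{\res}$ into the left-hand side, legitimate since $\|U-\tilde{\cE}_\Gamma(U)\|_{\res}<\infty$ a priori and $c_\Gamma^0\le\tfrac12\tilde c_\Gamma$ (because $(S_1+S_2)^{-1}\le(\max\{S_1,S_2\})^{-1}$) gives the uniform gap $\tilde c_\Gamma-\|Z\|_{\res}\ge\tfrac56\tilde c_\Gamma$. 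Your version is shorter and avoids the block-by-block bookkeeping and the unitarity relations entirely; the paper's version, while more computational, produces the intermediate estimates on $\|vp_0\|$ and $\|p_0vp_0\|_2$ that make the mechanism behind the Shale--Stinespring control of $v$ more transparent. Both proofs use the hypothesis $\|U\Gamma U^*-\Gamma\|_{\res}\le c_\Gamma^0/3$ in the same essential way, namely to create a quantitative margin below the relevant closed-range constant.
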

\begin{proof}
Using that $\Gamma$ is  diagonal and expression in \eqref{action ubog}, we note 
$$
U=\begin{pmatrix}  u  &  v  \\   \bar{v} &   \bar{u}   \end{pmatrix}, \, \, \, \, \, U\Gamma U^*=\begin{pmatrix}   *   &  
u\Lambda \bar{v}^*  +  v(\d1-\Lambda)\bar{u}^*       \\  *   &  *  \end{pmatrix}.
$$
Recall that $\|U\|_{\res}=2 \max\{ \|u\|, \, \|v\|_2 \}$. The operator $U$ is unitary,  which implies that  $\|u\| \leq 1$ and $\|v\| \leq 1$. Thus, our task is to estimate $\|v\|_2$.  Moreover, notice that $\Gamma \in \DF$ with finite spectrum says that  $\Lambda$ is a finite-rank operator. Using the notation in Remark \ref{proj ubog diag}, we have $\Lambda=\sum_{i=0}^r \lambda_i p_i$, where $\sum_{i=0}^rp_i=\d1$ and $p_0$ is the unique projection with infinite rank. Since $v=p_0vp_0  +  (\d1 -p_0)vp_0 + v(\d1 - p_0)$ and $\d1 - p_0$ has finite rank, we only have to estimate $\|p_0vp_0\|_2$. By assumption $\| U\Gamma U^* - \Gamma \|_{\res} \leq  \frac{c_\Gamma^0}{3}$, and therefore, 
$\| u\Lambda \bar{v}^*  +  v(\d1-\Lambda)\bar{u}^* \|_2    \leq  \frac{c_\Gamma^0}{6}$. Since $\Lambda \in \cB_2(\fH)$, we obtain
\begin{align*}
\|p_0 v p_0 \bar{u}^*\|_2   &    \leq    \frac{c_\Gamma^0}{6}  + \|u\Lambda \bar{v}^*\|_2      +   \|  v\Lambda\bar{u}^*  \|_2  +  \|  (\d1 -p_0)vp_0 + v(\d1 - p_0) \bar{u}^* \|_2 \\
&  \leq   \frac{c_\Gamma^0}{6} +  \|u\| \|\bar{v}^*\| \|\Lambda \|_2  +   \| v\| \| \bar{u}^* \|   \|\Lambda \|_2    +      \|  (\d1 -p_0)\|_2 \| vp_0 \|+  \|v\| \|(\d1 - p_0)\|_2 \| \bar{u}^* \|     \\
& \leq \frac{c_\Gamma^0}{6} + 2 \| \Lambda \|_2  + 2\|  \d1  - p_0 \|_2 :=C_1 . 
\end{align*}
Next consider
\begin{align*}
\|p_0 v p_0 \bar{u}^*\|_2^2  & =\| \bar{u}p_0 v^* p_0\|_2^2 = \Tr(p_0v p_0\bar{u}^*\bar{u} p_0v^*p_0)\\ 
& =\Tr(p_0v p_0(\d1 - \bar{v}^*\bar{v}) p_0v^*p_0) \geq (1- \|  p_0 \bar{v}^*\bar{v}p_0\|) \|  p_0vp_0 \|_2^2
\end{align*}
 which gives
\begin{equation*}
 (1- \|  p_0 \bar{v}^*\bar{v}p_0\|) \|  p_0vp_0 \|_2^2 \leq   \|p_0vp_0 \bar{u}^*\|_2^2 \leq C_1^2.
\end{equation*}
We will see below that $\|p_0 \bar{v}^*\bar{v} p_0\|=\|p_0 v^*v p_0\| \leq C<1$ for some $C>0$. This would imply the desired estimate:
\begin{equation}\label{Tr HS estimate}
\|p_0vp_0\|_2  \leq (1-C^2)^{-1/2}C_1.
\end{equation}
To get a finer estimate the operator norm of $v$,  we first give a  bound for the operator norm in terms of the restricted norm:  for $X \in \cB_2(\fH \oplus \fH,D)$,
\begin{align*}
\| X \| & \leq \|  P_+XP_+\|  +\|P_+ X P_-  \|  +  \|P_- X  P_+  \|  +\|P_- X P_-  \| \\
& \leq  \|  P_+XP_+\|  +\|P_+ X P_-  \|_2  +  \|P_- X  P_+  \|_2  +\|P_- X P_-  \| \leq 2 \|  X  \|_{\res}.
\end{align*} 
In the case where $\frac{1}{2}\notin \sigma(\Gamma)$, then by the previous estimate and Lemma \ref{constants two types} $ii)$  we obtain 
\begin{align*}
\| v p_0 \| & \leq \| v\| =\| P_+ (U- \cE_\Gamma^0(U))P_- \|\leq \|  U- \cE^0_\Gamma(U) \|  \leq (c_\Gamma^0)^{-1} \|  \delta_\Gamma^0(U)\| \\
& =(c_\Gamma^0)^{-1} \|  U\Gamma U^*   - \Gamma \|  \leq 2  (c_\Gamma^0)^{-1} \|  U\Gamma U^*   - \Gamma \|_{\res} \leq \frac{2}{3}.
\end{align*}
In the case where $\frac{1}{2} \in \sigma(\Gamma)$ we have $ P_+ (U- \cE_\Gamma^0(U))P_-=v - p_1vp_1$, and in the similar fashion as above we get
$\|vp_0\| = \|(v - p_1 v p_1)p_0    \| \leq \| v - p_1 v p_1 \|  =\|  P_+ (U- \cE_\Gamma^0(U))P_-   \| \leq\frac{2}{3} $.
In both cases, $\|  p_0v^*vp_0 \|=\| vp_0 \|^2< \frac{4}{9}$, so by Eq. \eqref{Tr HS estimate} we conclude $\|p_0vp_0 \|_2 \leq \frac{9}{\sqrt{65}} C_1$. 
Hence we conclude $\| v\|_2 =\| p_0vp_0  +  (\d1 -p_0)vp_0 + v(\d1 - p_0)\|_2 \leq  \frac{9}{\sqrt{65}} C_1 + 2 \| \d1   -  p_0 \|_2 :=K$, where this constant only depends on $\Gamma$.
\end{proof}

 
We are now able to prove our main result on the two topologies of orbits of g1-pdms.

\begin{prop}\label{closed top vs finite spectrum0}
Let $\Gamma \in \DF$. Then $\tau_r=\tau_q$ if and only if $\sigma(\Gamma)$ is finite.
\end{prop}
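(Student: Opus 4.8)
The plan is to use the reformulation recalled just before the statement: $\tau_q$ is always finer than $\tau_r$, so $\tau_r=\tau_q$ precisely when $\pi_\Gamma\colon\UB\to(\cO(\Gamma),\tau_r)$, $\pi_\Gamma(U)=U\Gamma U^*$, admits continuous local cross sections for the relative topology, and by homogeneity it suffices to produce one on a $\tau_r$-neighbourhood of $\Gamma$. I would first reduce to $\Gamma=\Gamma[\Lambda,0]$ diagonal: conjugation by any fixed $W\in\UB$ is a linear homeomorphism of $\bres$ carrying $\cO(\Gamma)$ onto $\cO(W\Gamma W^*)$ and intertwining the two orbit maps, hence it transports both $\tau_r$ and $\tau_q$, so by the diagonalization recalled in Remark \ref{proj ubog diag} we may assume $\Lambda=\sum_{i\ge1}\lambda_ip_i$ with $\lambda_i\in(0,\tfrac12]$ distinct and $p_0$ the projection onto $\ker\Lambda$.

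For the implication $\Leftarrow$, suppose $\sigma(\Gamma)$ is finite. Since every $\Gamma_1\in\cO(\Gamma)$ has $\sigma(\Gamma_1)=\sigma(\Gamma)$, I would fix once and for all pairwise disjoint small circles $C_\mu$ around the finitely many $\mu\in\sigma(\Gamma)$ and set $q_\mu(\Gamma_1):=\frac1{2\pi i}\oint_{C_\mu}(z-\Gamma_1)^{-1}\,dz$, the Riesz projection of $\Gamma_1$ at $\mu$; as $\bres$ is an inverse-closed Banach $*$-algebra, $\Gamma_1\mapsto q_\mu(\Gamma_1)$ is continuous from $(\cO(\Gamma),\tau_r)$ into $\bres$, with $q_\mu(\Gamma)=q_\mu$ and $\sum_\mu q_\mu(\Gamma_1)=\d1$. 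Then $T(\Gamma_1):=\sum_\mu q_\mu(\Gamma_1)q_\mu$ is continuous and $T(\Gamma)=\d1$, so $T(\Gamma_1)$ is invertible in $\bres$ for $\Gamma_1$ in a $\tau_r$-neighbourhood of $\Gamma$, and its unitary part $s(\Gamma_1):=T(\Gamma_1)|T(\Gamma_1)|^{-1}$ (computed by holomorphic functional calculus in $\bres$) depends continuously on $\Gamma_1$, lies in $\UB$ (it is unitary and satisfies $Is(\Gamma_1)I=s(\Gamma_1)$, since $I\Gamma_1I=\d1-\Gamma_1$ and $\sigma(\Gamma)$ is symmetric about $\tfrac12$), and satisfies $s(\Gamma_1)q_\mu s(\Gamma_1)^*=q_\mu(\Gamma_1)$, hence $s(\Gamma_1)\Gamma s(\Gamma_1)^*=\sum_\mu\mu\,q_\mu(\Gamma_1)=\Gamma_1$. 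Thus $s$ is a $\tau_r$-continuous local cross section, $\pi_\Gamma$ is $\tau_r$-open, and $\tau_r=\tau_q$. (Alternatively, one may run the argument of \cite[Thm.~4.4]{AL10}, solving $e^{X}\Gamma e^{-X}=\Gamma_1$ for $X=X(\Gamma_1)$ in $\mathfrak m_\Gamma=\ker(\cE_\Gamma)$ by a Banach fixed-point argument built on the bounded partial inverse $\beta_\Gamma$ of $\tde$ from the proof of Lemma \ref{constants two types} and the uniform bound $\|U\|_{\res}\le K$ of Lemma \ref{technical lemma 00}.)

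For the implication $\Rightarrow$ I would argue by contraposition: assuming $\sigma(\Gamma)$ infinite, I exhibit a sequence $\Gamma_n\in\cO(\Gamma)$ with $\Gamma_n\to\Gamma$ in $\tau_r$ but not in $\tau_q$. Enumerate the distinct eigenvalues as $\lambda_{(1)}>\lambda_{(2)}>\cdots\to0$ (trace class), pick basis vectors $\varphi_{a_n}$ in the eigenspace of $\lambda_{(2n)}$ and $\varphi_{b_n}$ in that of $\lambda_{(2n+1)}$, fix $\theta_0=\pi/4$, and let $u_n$ be the rotation by $\theta_0$ in $\spans\{\varphi_{a_n},\varphi_{b_n}\}$, the identity elsewhere, so $U_n:=\diag(u_n,u_n)\in\UB$ ($\bar u_n=u_n$). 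Then $\Gamma_n:=U_n\Gamma U_n^*=\Gamma[u_n\Lambda u_n^*,0]$ and $\|\Gamma_n-\Gamma\|_{\res}=2\|u_n\Lambda u_n^*-\Lambda\|=2|\lambda_{(2n)}-\lambda_{(2n+1)}|\,|\sin\theta_0|\le2\big(\lambda_{(2n)}+\lambda_{(2n+1)}\big)\to0$, so $\Gamma_n\to\Gamma$ in $\tau_r$. If $\Gamma_n\to\Gamma$ in $\tau_q$, a $\tau_q$-continuous local section of $\pi_\Gamma$ at $\Gamma$ yields $V_n\in\UB$ with $V_n\to\d1$ and $V_n\Gamma V_n^*=\Gamma_n$; then $W_n:=U_n^*V_n\in\UB^\Gamma$, so by the description of the isotropy group its $(1,1)$-block $w_n$ commutes with $\Lambda$ (the possible $p_1$-supported off-diagonal part of $W_n$ is harmless, as $\varphi_{a_n},\varphi_{b_n}\perp\ran(p_1)$ for large $n$). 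Hence $w_n$ preserves the $\lambda_{(2n)}$-eigenspace, so $\langle w_n\varphi_{a_n},\varphi_{b_n}\rangle=0$; writing $w_n\varphi_{a_n}=\alpha_n\varphi_{a_n}+g_n$ with $g_n\perp\varphi_{a_n}$ (then $g_n$ lies in the $\lambda_{(2n)}$-eigenspace, so $g_n\perp\varphi_{b_n}$ and $u_ng_n=g_n$) and $|\alpha_n|^2+\|g_n\|^2=1$, one computes $\|(u_nw_n-\d1)\varphi_{a_n}\|^2=2-2\cos\theta_0\,\Re\alpha_n\ge2-2\cos\theta_0$, whence $\|V_n-\d1\|_{\res}\ge2\|u_nw_n-\d1\|\ge2\sqrt{2-\sqrt2}>0$, contradicting $V_n\to\d1$. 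So $\Gamma_n\not\to\Gamma$ in $\tau_q$, and $\tau_r\ne\tau_q$.

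I expect the $\Leftarrow$ direction — constructing a local cross section continuous for the relative topology — to be the main point. Unlike the unitarily invariant metrics of \cite{AL10,BL23}, multiplication by Bogoliubov transformations is not isometric for $\|\cdot\|_{\res}$, so one must either invoke the inverse-closedness of $\bres$ (as in the Riesz-projection construction) or make all estimates uniform in the implementing unitary — which is precisely the purpose of Lemmas \ref{technical lemma 00} and \ref{constants two types}.
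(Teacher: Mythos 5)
Your proof is correct, but both implications take a genuinely different route from the paper's. For the direction ``finite spectrum $\Rightarrow\tau_r=\tau_q$'' the paper builds the local cross section $s(U\Gamma U^*)=U\Omega(\tilde{\cE}_\Gamma(U^*))$ from the conditional expectation, and is therefore forced to prove Lemmas \ref{constants two types} and \ref{technical lemma 00} first in order to control $\|U\|_{\res}$ (the restricted norm not being invariant under multiplication by Bogoliubov transformations). Your Riesz-projection section $s(\Gamma_1)=T(\Gamma_1)|T(\Gamma_1)|^{-1}$ with $T(\Gamma_1)=\sum_\mu q_\mu(\Gamma_1)q_\mu$ is intrinsic in $\Gamma_1$ and never mentions an implementing unitary, so it bypasses both technical lemmas entirely; the price is that you must invoke the inverse-closedness of $\bres$ (which does hold, since $[X^{-1},D]=-X^{-1}[X,D]X^{-1}$ and $\cB_2(\fH\oplus\fH)$ is an ideal) and verify $Is(\Gamma_1)I=s(\Gamma_1)$ via the symmetry $\mu\mapsto 1-\mu$ of $\sigma(\Gamma)$ coming from $I\Gamma_1I=\d1-\Gamma_1$ --- both of which you do. For the converse, the paper argues abstractly: infinite spectrum forces $\ran(\delta_\Gamma)$ to be non-closed by Proposition \ref{closed range vs finite spectrum0}, which produces $X_n\in\mathfrak{m}_\Gamma$ with $\|X_n\|_{\res}=1$ and $\|\delta_\Gamma(X_n)\|_{\res}\to 0$, whence $e^{X_n}\Gamma e^{-X_n}\to\Gamma$ in $\tau_r$ while the chart $Z\mapsto e^Z\Gamma e^{-Z}$ being a $\tau_q$-homeomorphism near $\Gamma$ rules out $\tau_q$-convergence. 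Your explicit rotations $u_n$ mixing the $\lambda_{(2n)}$- and $\lambda_{(2n+1)}$-eigenspaces are a concrete instance of the same phenomenon and make the obstruction visible (any unitary implementing $\Gamma_n$ over $\Gamma$ has $(1,1)$-block of the form $u_nw_n$ with $w_n$ commuting with $\Lambda$, hence stays at distance at least $2\sqrt{2-\sqrt{2}}$ from $\d1$), at the cost of a small case analysis around the $\tfrac12$-eigenspace, which you handle correctly.
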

\begin{proof}
Again we may assume that  $\Gamma=\Gamma[\Lambda, 0]$ is a diagonal g1-pdm. Suppose that $\tau_r=\tau_q$. If $\sigma(\Gamma)$ is not finite, then the derivation $\delta_\Gamma: \ub \to  \ub$, $\delta_\Gamma(X)=[i \Gamma, X]$ has non-closed range by Proposition \ref{closed range vs finite spectrum0}. This is equivalent to the existence of a sequence $\{ X_n\}_{n \geq 1}$ in $\mathfrak{m}_\Gamma$ such that $\|X_n\|_{\res}=1$ and $\| \delta_\Gamma(X_n)\|_{\res} \leq 1/n$. Recalling the well-known formula 
$e^{Z}\Gamma e^{-Z}=\Ad_{e^{Z}}(\Gamma)=e^{\delta_Z}(\Gamma)=(\d1_{\cB(\fH\oplus \fH)} + \delta_Z + \delta_Z^2/2 + \delta_Z ^3/3! + \ldots)(\Gamma)$, where $\delta_Z(X)=[iZ,X]$, and since $\| \delta_{X_n}\| \leq 2 $, we find that
\begin{align*}
\|  e^{X_n} \Gamma e^{-X_n}      -  \Gamma \|_{\res} & = \|  e^{\delta_{X_n}}(\Gamma) - \Gamma \|_{\res} 
\leq  \sum_{k=1}^{\infty}  \frac{\|\delta_{X_n}^k(\Gamma)\|_{\res}}{k!}  \\
&  \leq  \sum_{k=1}^{\infty}  \frac{\|\delta_{X_n}\|^{k-1}\|\delta_\Gamma(X_n)\|_{\res}}{k!} \leq \frac{1}{n} \sum_{k=1}^\infty \frac{2^{k-1}}{k!} \leq \frac{e^2}{n}.
\end{align*}

That is, the sequence $\{  e^{X_n} \Gamma e^{-X_n}\}_{n \geq 1}$ converges to $\Gamma$ in the topology $\tau_r$, and consequently, it also converges in the topology $\tau_q$. The maps $\phi: \cV_\Gamma \to \cW_\Gamma$, $\phi(Z)=e^Z\Gamma e^{-Z}$, where $0 \in \cV_\Gamma \subset \mathfrak{m}_\Gamma$ and $\Gamma \in \cW_\Gamma\subseteq \cO(\Gamma)$ are open sets, become homeomorphisms when these sets are sufficiently small and $\cO(\Gamma)$ is endowed with the topology $\tau_q$. Indeed, these maps are the inverse of the charts used to provide the manifold structure of 
$\cO(\Gamma)$ as homogeneous space of $\UB$ (see \cite[Lemma 4.21]{B06}). In particular, this yields that $\| X_n\|_{\res} \to 0$, a contradiction. Hence  $\sigma(\Gamma)$ must be finite.

Let us assume, conversely,  that  $\sigma(\Gamma)$ is finite. Suppose that $\Gamma=\Gamma[\Lambda, 0]$ is a diagonal g1-pdm.  In order to show that both topologies coincide, we will  prove that  $\pi_\Gamma: \UB \to \cO(\Gamma)$, $\pi_\Gamma(U)=U\Gamma U^*$, admits continuous local cross sections when $\cO(\Gamma)$ is considered with the topology $\tau_r$. 
For $G$ an invertible operator, let $\Omega(G)=G|G|^{-1}$ be the unitary part in the polar decomposition. Consider $c_\Gamma^0$, $\tilde{c}_\Gamma$ and $K$ the positive constants  defined in   Lemmas \ref{constants two types} and \ref{technical lemma 00}.  Then we set $r_\Gamma:=\frac{1}{2}\min\left\{  \frac{c_\Gamma^0}{3}, \,  K^{-2} \tilde{c}_\Gamma \right\}$. The section $s=s_\Gamma$ at $\Gamma$ is then defined by
$$
s: \left\{  U\Gamma U^*  \in \cO(\Gamma):    \|  U\Gamma U^*  - \Gamma\|_{\res}< r_\Gamma  \right\}   \to  \UB, \, \, \, \, \, s(U\Gamma U^*)= U\Omega(\tilde{\cE}_\Gamma(U^*)).
$$  
 Clearly, we have $ U\Omega(\tilde{\cE}_\Gamma(U^*)) \in \cB_2(\fH \oplus \fH, D)$, for  $U \in \UB$.  We claim that if $\|  U\Gamma U^*  - \Gamma\|_{\res}< r_\Gamma$, then $\tilde{\cE}_\Gamma(U^*)$ is  invertible in the algebra $\cB_2(\fH \oplus \fH,D)$. To see this we recall that by our choice of $r_\Gamma$,  Lemma \ref{technical lemma 00}  implies $\|U\|_{\res}\leq K$,  or equivalently, $\|U^*\|_{\res}\leq K$. Therefore, for $\d1=\d1_{\fH \oplus \fH}$,
\begin{align}
\| \d1 - U\tilde{\cE}_\Gamma(U^*)\|_{\res} &  \leq \| U\|_{\res} \| U^* - \tilde{\cE}_\Gamma(U^*)\|_{\res} \leq K \tilde{c}_\Gamma^{-1}  \| \tilde{\delta}_\Gamma(U^*)   \|_{\res} \nonumber \\ 
& \leq  K \tilde{c}_\Gamma^{-1} \|U^*  \|_{\res}\|  U\Gamma U^*   - \Gamma  \|_{\res}   \leq  K^2 \tilde{c}_\Gamma^{-1}\| U\Gamma U^*   - \Gamma  \|_{\res} < 1.  \nonumber 
\end{align} 
Hence $U\tilde{\cE}_\Gamma(U^*)$  invertible in $\cB_2(\fH\oplus \fH,D)$, and then $\tilde{\cE}_\Gamma(U^*)$ is also invertible. This proves our claim.

Now we check that $s$ is well defined. Suppose that $\Gamma_1=U\Gamma U^*=V\Gamma V^*$ for $U, V \in \UB$. 
Thus, $U^*V\Gamma=\Gamma U^*V$, which gives
\begin{align*}
s(U\Gamma U^*) & =U\Omega(\tilde{\cE}_\Gamma(U^*))=U\Omega(\tilde{\cE}_\Gamma(U^*VV^*))=U\Omega(U^*V \tilde{\cE}_\Gamma(V^*)) \\
& =UU^*V \Omega(\tilde{\cE}_\Gamma(V^*))=V\Omega(\tilde{\cE}_\Gamma(V^*))=s(V\Gamma V^*).  
\end{align*}
Here we have used that $\tilde{\cE}_\Gamma(ZX)=Z\tilde{\cE}_\Gamma(X)$, whenever $X, Z \in \cB_2(\fH\oplus \fH,D)$ and $Z\Gamma=\Gamma Z$.

Now note that $s$ actually takes values on $\UB$. 
From the definition of $\tilde{\cE}_\Gamma$,  $\tilde{\cE}_\Gamma(U^*)I=I\tilde{\cE}_\Gamma(U^*)$ because $U \in \UB$, which yields $\Omega(\tilde{\cE}_\Gamma(U^*))I=I\Omega(\tilde{\cE}_\Gamma(U^*))$. It is not difficult to check  that $\ran(\tilde{\cE}_\Gamma)$ is a closed $*$-subalgebra of $\cB_2(\fH \oplus \fH,D)$, so that $\Omega(\tilde{\cE}_\Gamma(U^*)) \in \cB_2(\fH \oplus \fH,D)$ by using the analytic functional calculus in the Banach algebra $\cB_2(\fH \oplus \fH)$. From these facts, we conclude that $\Omega(\tilde{\cE}_\Gamma(U^*)) \in \UB$, and consequently,  $s(U\Gamma U^*) \in \UB$.  On the other hand, $s$ is indeed a section. To see this, take $\Gamma_1=U\Gamma U^*$. Since $\tilde{\cE}_\Gamma(U^*)\Gamma=\Gamma \tilde{\cE}_\Gamma(U^*)$, we find that $\Omega(\tilde{\cE}_\Gamma(U^*))\Gamma=\Gamma \Omega(\tilde{\cE}_\Gamma(U^*))$. Hence $s(\Gamma_1)\Gamma s(\Gamma_1)^*=U \Omega(\tilde{\cE}_\Gamma(U^*)) \Gamma \Omega(\tilde{\cE}_\Gamma(U^*))^* U^*=U\Gamma U^*= \Gamma_1$. 

We now prove that $s$ is continuous.  It suffices to prove the continuity at $\Gamma$. Pick a sequence $\{ U_n \Gamma U_n^*\}_{n \geq 1}$ such that $\| U_n\Gamma U_n^* - \Gamma \|_{\res} \to 0$. By the estimate we have seen above, it follows that $\| \d1 - U_n \tilde{\cE}_\Gamma (U_n^*) \|_{\res} \leq  K^2 c_\Gamma^{-1}\| U_n\Gamma U_n^*   - \Gamma  \|_{\res} \to 0$. Using that $\Omega(G)=G|G|^{-1}$ is  continuous as a map in the invertible elements of the Banach $*$-algebra $\cB_2(\fH \oplus \fH)$ by the continuity  of the analytic functional calculus, we get $s(U_n \Gamma U_n^*)=U_n \Omega(\tilde{\cE}_\Gamma(U_n^*))=\Omega(U_n \cE_\Gamma(U_n^*)) \to \Omega(\d1)=\d1=s(\Gamma)$.  
Finally, we note that section  can be  translated to other points by putting $s_{\Gamma_1}=\Ad_U  \circ s_\Gamma \circ \Ad_{U^*}$ if $\Gamma_1=U\Gamma U^*$.  
\end{proof}

As a consequence of the previous results we now obtain the following characterization.

\begin{teo}\label{submanifold}
Let $\Gamma \in \DF$. The following conditions are equivalent:
\begin{enumerate}
\item[i)] The spectrum of $\Gamma$ is finite.
\item[ii)] Tangent spaces of $\cO(\Gamma)$ are closed in $i \ub$.
\item[iii)] The topology inherited from $i \ub + P_-$ and the quotient topology coincide in $\cO(\Gamma)$.
\item[iv)] $\cO(\Gamma)$ is an embedded submanifold of the affine space $i \ub + P_-$. 
\end{enumerate}
\end{teo}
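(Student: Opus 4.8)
The plan is to run a short cycle of implications: the equivalences among (i), (ii), (iii) are already in hand from the two preceding propositions, so the only implication that requires a genuine construction is (i)$\Rightarrow$(iv).

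First, (i)$\Leftrightarrow$(ii) is exactly Proposition~\ref{closed range vs finite spectrum0}, since $i\,\ran(\delta_\Gamma)=T_\Gamma\cO(\Gamma)$ and, by equivariance of the conjugation action, the tangent spaces at all points of $\cO(\Gamma)$ are closed simultaneously; likewise (i)$\Leftrightarrow$(iii) is Proposition~\ref{closed top vs finite spectrum0}. For (iv)$\Rightarrow$(iii) I would argue as follows: if $\cO(\Gamma)$ is an embedded submanifold of $i\ub+P_-$, its manifold topology is the subspace topology $\tau_r$, while $\pi_\Gamma\colon\UB\to\cO(\Gamma)$ remains smooth into it, with differential at each point the split surjection $X\mapsto[X,\Gamma_1]$; hence $\pi_\Gamma$ is a submersion, in particular open, so the topology of the embedded structure is also the quotient topology $\tau_q$, and $\tau_q=\tau_r$. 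Then (iv)$\Rightarrow$(iii)$\Leftrightarrow$(i)$\Leftrightarrow$(ii), and only (i)$\Rightarrow$(iv) is left to close the circle; incidentally (iv)$\Rightarrow$(ii) also follows directly, the tangent spaces of an embedded submanifold being closed by Proposition~\ref{crit immersion}.

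For (i)$\Rightarrow$(iv), assume $\sigma(\Gamma)$ is finite. Conjugation $X\mapsto UXU^{*}$ by $U\in\UB$ is a bounded affine automorphism of $i\ub+P_-$ (since $\Ad_U$ is complex linear and preserves $\ub$, and $UP_-U^{*}-P_-\in i\ub$), it intertwines the $\UB$-action on $\cO(\Gamma)$ with the inclusion $\cO(\Gamma)\hookrightarrow i\ub+P_-$, and it is transitive on $\cO(\Gamma)$; since every orbit contains a diagonal g1-pdm, it suffices to produce a submanifold chart of $i\ub+P_-$ around a diagonal $\Gamma=\Gamma[\Lambda,0]$. By Proposition~\ref{closed range vs finite spectrum0}, $i\ub=i\ubg\oplus T_\Gamma\cO(\Gamma)=i\ubg\oplus i\mathfrak{m}_\Gamma$ with $\mathfrak{m}_\Gamma=\ker(\cE_\Gamma)$ and $\ran(\delta_\Gamma)=\mathfrak{m}_\Gamma$. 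I would then consider
$$
F(Z,W)=e^{Z}(\Gamma+W)e^{-Z},\qquad Z\in\mathfrak{m}_\Gamma,\ W\in i\ubg ,
$$
for $(Z,W)$ near $(0,0)$, check that it takes values in $i\ub+P_-$ and is smooth (it is assembled from the exponential map and products in the Banach algebra $\cB_2(\fH\oplus\fH,D)$), and compute its differential at the origin, $(\dot Z,\dot W)\mapsto[\dot Z,\Gamma]+\dot W=i\,\delta_\Gamma(\dot Z)+\dot W$. This is the direct sum of the identity of $i\ubg$ with $\delta_\Gamma|_{\mathfrak{m}_\Gamma}\colon\mathfrak{m}_\Gamma\to\ran(\delta_\Gamma)=\mathfrak{m}_\Gamma$, which is a bounded bijection — injective because $\ker(\delta_\Gamma)=\ubg$ meets $\ker(\cE_\Gamma)=\mathfrak{m}_\Gamma$ only in $0$, surjective because $\ran(\delta_\Gamma)=\mathfrak{m}_\Gamma$ — and hence a topological isomorphism by the open mapping theorem. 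So $DF(0,0)$ is a topological isomorphism onto $i\ub$, and the inverse function theorem makes $F$ a diffeomorphism from a neighbourhood $\cV\times\cU$ of $(0,0)$ onto a neighbourhood $\cN$ of $\Gamma$ in $i\ub+P_-$.

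The last step — and the one I expect to be the real obstacle — is to show that, after shrinking, $F^{-1}\big(\cO(\Gamma)\cap\cN\big)=\cV\times\{0\}$, i.e. that $\cO(\Gamma)$ meets $\cN$ in exactly the slice $W=0$. One inclusion is immediate, since $F(Z,0)=e^{Z}\Gamma e^{-Z}\in\cO(\Gamma)$. For the other, if $F(Z,W)\in\cO(\Gamma)$ then conjugating by $e^{-Z}$ gives $\Gamma+W\in\cO(\Gamma)$; since $\tau_q=\tau_r$ (Proposition~\ref{closed top vs finite spectrum0}), the chart $\phi(Z')=e^{Z'}\Gamma e^{-Z'}$, $Z'\in\mathfrak{m}_\Gamma$ small, of the homogeneous space $\cO(\Gamma)$ is a homeomorphism onto a $\tau_r$-neighbourhood of $\Gamma$, so shrinking $\cU$ yields $\Gamma+W=\phi(Z')=F(Z',0)$ for a unique small $Z'$; comparing with $\Gamma+W=F(0,W)$ and using injectivity of $F$ on $\cV\times\cU$ forces $W=0$. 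This produces a submanifold chart for $\cO(\Gamma)$ at $\Gamma$, split because $T_\Gamma\cO(\Gamma)=i\mathfrak{m}_\Gamma$ is complemented in $i\ub$, and equivariance propagates it to every point of $\cO(\Gamma)$. What makes this localization the crux is that, a priori, $\cO(\Gamma)$ could meet $\cN$ in more than $F(\,\cdot\,,0)$: ruling this out forces one to use simultaneously the algebraic splitting $i\ub=i\ubg\oplus T_\Gamma\cO(\Gamma)$ and the topological coincidence $\tau_q=\tau_r$, that is, the full strength of Propositions~\ref{closed range vs finite spectrum0} and \ref{closed top vs finite spectrum0}.
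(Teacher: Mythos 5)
Your proposal is correct, and for the three equivalences (i)$\Leftrightarrow$(ii)$\Leftrightarrow$(iii) it coincides with the paper's proof, which simply cites Propositions \ref{closed range vs finite spectrum0} and \ref{closed top vs finite spectrum0}. Where you genuinely diverge is in how item (iv) is tied in. The paper disposes of (iv) in one line by invoking the abstract criterion of Proposition \ref{crit immersion}: the inclusion $\cO(\Gamma)\hookrightarrow i\ub+P_-$ is an immersion because the tangent spaces are closed and complemented (Proposition \ref{closed range vs finite spectrum0}), and the two topologies agree (Proposition \ref{closed top vs finite spectrum0}), so the criterion yields the embedding; the converse directions are the same criterion read backwards. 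You instead reprove the hard implication (i)$\Rightarrow$(iv) constructively, building an explicit submanifold chart $F(Z,W)=e^{Z}(\Gamma+W)e^{-Z}$ on $\mathfrak{m}_\Gamma\times i\ubg$ and verifying via the open mapping theorem that $DF(0,0)=i\,\delta_\Gamma|_{\mathfrak m_\Gamma}\oplus\mathrm{id}_{i\ubg}$ is a topological isomorphism of $i\ub$, then using $\tau_q=\tau_r$ to localize the orbit to the slice $W=0$. This is sound and buys an explicit local normal form (a tubular-type chart adapted to the splitting $i\ub=i\ubg\oplus T_\Gamma\cO(\Gamma)$), at the cost of length; note that it still consumes the full strength of both propositions, exactly as the paper's shorter argument does, so nothing is gained in terms of hypotheses. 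One minor remark: your detour through ``$\pi_\Gamma$ is a submersion onto the embedded structure, hence open'' for (iv)$\Rightarrow$(iii) tacitly assumes that the tangent space of the embedded submanifold at $\Gamma_1$ equals $\{[X,\Gamma_1]:X\in\ub\}$; this is harmless here, but the intended reading of (iv) is that the homogeneous manifold $\cO(\Gamma)$ itself is embedded, in which case Proposition \ref{crit immersion} gives (iv)$\Rightarrow$(iii) directly and you can drop that step.
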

\begin{proof}
The equivalences between $i)$, $ii)$ and $iii)$ are proved in Propositions \ref{closed range vs finite spectrum0} and \ref{closed top vs finite spectrum0}. According to Proposition   \ref{crit immersion}, these items are  equivalent to item  $iv)$, since Proposition   \ref{closed range vs finite spectrum0} also gives  that tangent spaces are complemented subspaces in $i \ub$. 
\end{proof}




\section{ K\"ahler homogeneous spaces}\label{symplectic complex kahler}

\subsection{An invariant  symplectic form on orbits of g1-pdms}\label{symplectic structures}



In this section, we will  construct an invariant symplectic form on orbits of g1-pdms.

\begin{rem}
We will frequently use without mention the following well-known fact for computations involving the trace. If $x$ is a compact operator and
$y$ a bounded operator such that $xy$ and $yx$ are trace-class operators, then $\Tr(xy)=\Tr(yx)$ (see \cite[Chap.III Thm. 8.2]{GK60}).
\end{rem}

\begin{rem}
\noi $i)$ Consider the complex Lie algebra $\fg=\mathfrak{gl}_2(\fH \oplus \fH,I,D)$ defined in Remark \ref{rem Banach alg-restricted orthogonal-isomorphism}.  This corresponds to the case $k_\infty=1$ in \cite[Prop. I.11]{N} (see also \cite[Ex. I.9.b]{N}), which says that  $H^2_c(\fg, \C)\cong \C$. 
As we have already observed in the previous section, $\fg_\R=\{ X \in \fg : X^*=-X \}=\ub$. Then same arguments to that of \cite[Prop. 2.4]{BRT} give 
that $H^2_c(\ub,\R)\cong \R$. Further, a generator of $H^2_c(\ub,\R)$ is given by
\begin{equation*}
s_+(X,Y):=\Tr(X [iP_+,Y]), \, \, \, \, \, X,Y \in \ub. 
\end{equation*}
This can also be expressed as
$$
s_+\left(\begin{pmatrix} x_1    &    x_2   \\  \bar{x}_2   &     \bar{x}_1   \end{pmatrix},  \begin{pmatrix} y_1    &    y_2   \\  \bar{y}_2   &     \bar{y}_1   \end{pmatrix} \right)=2 \Im \Tr(x_2\bar{y}_2).
$$

\medskip

\noi $ii)$ It is interesting to point out that  a multiple of the above generator already appeared in the literature as a $2$-cocycle of $\mathfrak{o}_{\mathrm{res}}(\fH^\R)$. Following the notation in Remark \ref{rem Banach alg-restricted orthogonal-isomorphism} $ii)$, recall that  for $A \in \cB(\fH^\R)$, we write $A_a$ for its antilinear part.   It was proved by Vershik \cite{V} that the function
$$
\alpha:\mathfrak{o}_{\mathrm{res}}(\fH^\R) \times \mathfrak{o}_{\mathrm{res}}(\fH^\R) \to \R, \, \, \, \, \, \, \alpha(A,B)=\Tr([A_a ,B_a]J_0)=2 \Tr(A_a B_a J_0),
$$
is a continuous real 2-cocycle of $\mathfrak{o}_{\mathrm{res}}(\fH^\R)$. The trace is taken using  the inner product $\Re  \PI{\, }{\, }$ of $\fH^\R$; $A_a$, $B_a$ belong to $\cB_2(\fH^\R)$ if $A,B \in \mathfrak{o}_{\mathrm{res}}(\fH^\R)$, which leads to the different expressions above for $\alpha$. 
The differential map at the identity $\d1=\d1_{\fH\oplus \fH}$ of the Lie group isomorphism in \eqref{isom Banach alg} is given by $(d \, \Xi)_{\d1}:  \mathfrak{o}_{\mathrm{res}}(\fH^\R) \to \ub$, $(d \, \Xi)_{\d1}(A)=T^{-1}A_\C T$, which is  a Lie algebra isomorphism.  Therefore  there is a  continuous real 2-cocycle  $ \Xi_*\alpha: \ub \times \ub \to \R$ defined by 
$$
(\Xi_* \alpha)(X,Y):=\alpha((d \, \Xi)_{\d1}^{-1}(X), (d \, \Xi)_{\d1}^{-1}(Y)).
$$
Using analogous relations for the algebras level to the ones stated  in \eqref{iso matrix op}  for the group level, it is easy to see that 
$$
(\Xi_* \alpha)\left(\begin{pmatrix} x_1    &    x_2   \\  \bar{x}_2   &     \bar{x}_1   \end{pmatrix},  \begin{pmatrix} y_1    &    y_2   \\  \bar{y}_2   &     \bar{y}_1   \end{pmatrix} \right)=-4 \Im \Tr(x_2\bar{y}_2).
$$
Hence $\Xi_* \alpha=-2s_+$. 
\end{rem}

\begin{rem}\label{1 2 banach lg}
$i)$ We will need another complex Banach $*$-algebra: 
$$
\cB_{1,2}(\fH \oplus \fH, D):=\left\{   \begin{pmatrix}   x_{11}  &  x_{12} \\  x_{21}  &   x_{22} \end{pmatrix} \in \cB(\fH \oplus \fH) : x_{11}, x_{22} \in \cB_1(\fH), \, x_{12}, x_{21} \in \cB_2(\fH)   \right\},
$$  
which is endowed with the norm
$$
\|  X \|_{1,2}:=2 \max \{  \|x_{11}\|_1 , \, \|x_{22}\|_1 , \,  \|x_{12}\|_2 , \, \| x_{21}  \|_2  \}.
$$
This norm satisfies $\| XY\|_{1,2} \leq  \|X\|_{1,2} \|Y\|_{1,2}$. The involution $*$ is the usual adjoint of operators.
We remark that this type of block decompositions in terms of trace-class and Hilbert-Schmidt operators  appeared 
  in the  \textit{anomaly-free orthogonal and symplectic groups}   \cite{DG}, and in the study of infinite dimensional Poisson geometry  related to the restricted Grassmannian \cite{BRT, ABT}. 

\medskip

\noi $ii)$ We introduce a real subspace of $\cB_{1,2}(\fH \oplus \fH, D)$. Set
\begin{align*}
(\ub)_* & :=  \ub \cap \cB_{1,2}(\fH \oplus \fH, D) \\
& = \left\{  \begin{pmatrix}   x_1   &   x_2  \\   \bar{x}_2  &   \bar{x}_1    \end{pmatrix} :   x_1^*=-x_1 \in \cB_1(\fH), \, x_2^T=-x_2 \in \cB_2(\fH)    \right\}.
\end{align*}
The notation is justified by the following fact: the pairing
$$
\PI{ \, \cdot  \,}{ \, \cdot \,}: ( \ub)_*     \times  \ub  \to  \R, \, \, \, \, \, \PI{X}{Y}:=\Re \Tr(XY) 
$$
induces a topological isomorphism $((\ub)_*)^* \cong  \ub$. 
This is indeed an immediate consequence of \cite[Prop. 2.1]{BRT}, where it was proved that
$$
\mathfrak{u}_{\mathrm{res}}:=\left\{   \begin{pmatrix}   x_{11}   &   x_{12}  \\  - x_{12}^*   &   x_{22}    \end{pmatrix} \in \cB(\fH \oplus \fH)  : x_{11}=-x_{11}^*, \, x_{22}=-x_{22}^*, \,  x_{12} \in \cB_2(\fH)           \right\}
$$
and 
$$
( \mathfrak{u}_{\mathrm{res}})_*:=\left\{   \begin{pmatrix}   x_{11}   &   x_{12}  \\  - x_{12}^*   &   x_{22}    \end{pmatrix} \in \cB(\fH \oplus \fH)  :  x_{11}=-x_{11}^* \in \cB_1(\fH), \, x_{22}=-x_{22}^* \in \cB_1(\fH), \,   x_{12} \in \cB_2(\fH)           \right\}
$$
satisfy $(( \mathfrak{u}_{\mathrm{res}})_*)^*\cong \mathfrak{u}_{\mathrm{res}}$, where the duality pairing is given by $\PI{X}{Y}=\Tr(XY) = \Re \Tr(XY) $. In order to prove the topological isomorphism $(( \ub)_*)^* \cong  \ub$,  given $f \in (( \ub)_*)^*$, one extends this functional by the Hahn-Banach theorem to a functional in $(( \mathfrak{u}_{\mathrm{res}})_*)^*$. Therefore there exists $Y \in  \mathfrak{u}_{\mathrm{res}}$ such that $f(X)=\Re\Tr(XY)$, for all $X \in ( \ub)_*$. We only have to show that the element $Y$ can be changed for another element belonging to $ \ub$. To this end note that for any $X \in ( \ub)_*$, one has that $f(X)=f(IXI)=\Re \Tr((IXI) Y)=\Re \Tr(X (IYI))$. Thus, $f(X)=f(\frac{X+IXI}{2})=\Re \Tr(X(\frac{Y + IYI}{2}))$, where  $\frac{Y + IYI}{2} \in  \ub$.
\end{rem}




We now show that g1-pdms associated to quasi-free states give continuous real 2-cocycles of $\ub$. 




\begin{lem}\label{lema 1 sympl}
For $\Gamma \in  \DF$,  let $s_\Gamma:\ub \times \ub \to \R$ be defined by
$$
s_\Gamma(X,Y):=   \Tr(X[i\Gamma,Y])=\Tr(X\delta_\Gamma(Y)).
$$ 
Then $s_\Gamma$ is a continuous real 2-cocycle of $\ub$, which is cohomologous to $-s_+$. 
\end{lem}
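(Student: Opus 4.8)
The plan is to establish first that $s_\Gamma$ is a well-defined, jointly continuous, alternating $\R$-bilinear form, and then to obtain the cocycle identity and the cohomology class at the same time by comparing $s_\Gamma$ with the generator $s_+$ of $H^2_c(\ub,\R)$ recalled above. The comparison route is far shorter than verifying the Jacobi-type cocycle identity for $s_\Gamma$ directly.

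For well-definedness I would use the decomposition $\Gamma=\Gamma_0+P_-$ with $\Gamma_0=\Gamma[\gamma,\alpha]\in i\ub$: since $\gamma\in\cB_1(\fH)$ and $\alpha\in\cB_2(\fH)$ (see Remark \ref{d trace rem}), $\Gamma_0$ has trace-class diagonal blocks and Hilbert--Schmidt off-diagonal blocks, so $i\Gamma_0\in(\ub)_*\subseteq\cB_{1,2}(\fH\oplus\fH,D)$. Then $\delta_\Gamma(Y)=[i\Gamma_0,Y]+[iP_-,Y]$, where the first commutator lies in $\cB_{1,2}(\fH\oplus\fH,D)$ and the second is purely off-diagonal and Hilbert--Schmidt; hence $\delta_\Gamma(Y)\in(\ub)_*$. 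As $X\in\ub\subseteq\cB_2(\fH\oplus\fH,D)$, the product $X\delta_\Gamma(Y)$ has trace-class diagonal blocks, so its (block) trace is defined, and a short computation using that $X$ and $\delta_\Gamma(Y)$ are skew-adjoint shows the value is real. Bilinearity is immediate from linearity of $\delta_\Gamma$ and of the trace, while continuity follows from the duality pairing $(\ub)_*\times\ub\to\R$ of Remark \ref{1 2 banach lg} together with an estimate $\|\delta_\Gamma(Y)\|_{1,2}\le C_\Gamma\|Y\|_{\res}$ with $C_\Gamma$ depending only on $\|\gamma\|_1$ and $\|\alpha\|_2$.

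The crucial step is the identity
\[
s_\Gamma(X,Y)+s_+(X,Y)\;=\;\Tr\bigl(X[i\Gamma_0,Y]\bigr)\;=\;-\,\Tr\bigl(i\Gamma_0\,[X,Y]\bigr),\qquad X,Y\in\ub .
\]
The first equality holds because $\Gamma+P_+=\Gamma_0+\d1$ and $[i\d1,Y]=0$. For the second I would write $X[i\Gamma_0,Y]=Xi\Gamma_0Y-XYi\Gamma_0$, observe that each summand has trace-class diagonal blocks since $i\Gamma_0\in\cB_{1,2}(\fH\oplus\fH,D)$, and then invoke cyclicity of the block trace on products all of whose mixed block entries are individually trace-class; this gives $\Tr(Xi\Gamma_0Y)=\Tr(i\Gamma_0YX)$ and $\Tr(XYi\Gamma_0)=\Tr(i\Gamma_0XY)$, whence $\Tr(X[i\Gamma_0,Y])=\Tr(i\Gamma_0(YX-XY))=-\Tr(i\Gamma_0[X,Y])$. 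The functional $\xi(Z):=\Tr(i\Gamma_0Z)$ is a continuous real linear form on $\ub$ (again by Remark \ref{1 2 banach lg}, as $i\Gamma_0\in(\ub)_*$), so the displayed identity says exactly that $s_\Gamma+s_+$ is, up to sign, the Chevalley--Eilenberg coboundary $d\xi$, in particular a continuous $2$-coboundary; it also shows $s_\Gamma$ is alternating, since $s_+$ is and $(X,Y)\mapsto\Tr(i\Gamma_0[X,Y])$ is.

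Finally, since $s_+$ is a continuous $2$-cocycle (it generates $H^2_c(\ub,\R)$) and coboundaries are cocycles, $s_\Gamma=d\xi-s_+$ is a continuous $2$-cocycle; and since $s_\Gamma-(-s_+)=d\xi$ is a coboundary, $s_\Gamma$ is cohomologous to $-s_+$. The main obstacle I anticipate is purely one of bookkeeping: keeping careful track at each step of which block products are trace-class, which are Hilbert--Schmidt, and which are merely bounded, so that the block trace is legitimately additive and cyclic wherever it is used — once that is under control, the cohomological conclusion is essentially formal.
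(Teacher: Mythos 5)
Your proof is correct and follows essentially the same route as the paper: the decomposition $\Gamma=\Gamma_0+P_-$, the identification of $(X,Y)\mapsto\Tr(X[i\Gamma_0,Y])$ as the coboundary of $Z\mapsto-\Tr(i\Gamma_0 Z)$, and the comparison with the generator $s_+$ via $s_-=-s_+$. The only divergence is in how the rearrangement $\Tr(X[i\Gamma_0,Y])=-\Tr(i\Gamma_0[X,Y])$ is justified: the paper first verifies it for finite-rank $\Gamma_{00}$ and passes to the limit in $\|\cdot\|_{1,2}$ using the continuity estimates, whereas you invoke cyclicity of the block trace directly for products of restricted operators with elements of $\cB_{1,2}(\fH\oplus\fH,D)$ -- both are valid, and your version is slightly more direct at the cost of the block-by-block bookkeeping you already flag.
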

\begin{proof}
 We first prove that $s_\Gamma(X,Y)<\infty$. If $X=\begin{pmatrix}
x_1 & x_2 \\
\bar{x}_2 & \bar{x}_1
\end{pmatrix} \in \ub$, 
$Y= \begin{pmatrix}
y_1 & y_2 \\
\bar{y}_2 & \bar{y}_1
\end{pmatrix} \in \ub
$, $\Gamma=\Gamma[\gamma, \alpha]$, 
then
$$
[\Gamma,Y]=
\begin{pmatrix}  
[\gamma,y_1] + \alpha \bar{y_2}-y_2 \alpha^*     &    \gamma y_2- y_2(\d1 - \bar{\gamma})+ \alpha \bar{y}_1 - y_1 \alpha  \\
\alpha^*y_1 - \bar{y}_1\alpha^* + (\d1 -\bar{\gamma})\bar{y}_2 - \bar{y}_2\gamma       &     \alpha^*y_2- \bar{y}_2\alpha   + [\d1 - \bar{\gamma},\bar{y}_1]
\end{pmatrix}:=\begin{pmatrix}  z_1    &   z_2  \\   - \bar{z}_2   &    - \bar{z}_1  \end{pmatrix},
$$
where $z_1 \in \cB_1(\fH)$, $z_2 \in \cB_2(\fH)$. In fact, the form of the right-hand matrix is derived by using that $\alpha^*=-\bar{\alpha}$. 
Then, it follows that
\begin{align*}
s_\Gamma(X,Y) & = i (\Tr(x_1z_1) - \Tr(\bar{x}_1\bar{z}_1)) + i(\Tr(\bar{x}_2 z_2)-\Tr(x_2 \bar{z}_2)) \\
& = - 2\Im \left( \Tr(x_1z_1)  + \Tr(\bar{x}_2 z_2) \right) <\infty
\end{align*}
because $x_2, z_2 \in \cB_2(\fH)$, $z_1 \in \cB_1(\fH)$ and $x_1 \in \cB(\fH)$, which implies $\bar{x}_2 z_2, x_1 z_1 \in \cB_1(\fH)$. The above expression also shows that $s_\Gamma(X,Y) \in \R$.
Let us show that $s_\Gamma$ is a continuous 2-cocycle. For we write 
$$
\Gamma=\begin{pmatrix} \gamma & \alpha   \\   -\bar{\alpha}   &  - \bar{\gamma}  \end{pmatrix}+ \begin{pmatrix} 0  &   0   \\   0   & \d1  \end{pmatrix}=:\Gamma_0 + P_-\, ,
$$
and 
\begin{equation*}
s_\Gamma(X,Y)= \Tr(X[i\Gamma_0 , Y])   +   \Tr(X[iP_- , Y])=:s_{\Gamma_0}(X,Y) +  s_{-}(X,Y)\,.
\end{equation*}
We  estimate the first term
\begin{align}
|s_{\Gamma_0}(X,Y)|  & = 
\left| \Tr \left( \begin{pmatrix}  x_1    &   x_2  \\    \bar{x}_2   &     \bar{x}_1  \end{pmatrix}  
\begin{pmatrix}  
[\gamma,y_1] + \alpha \bar{y_2}-y_2 \alpha^*     &    \gamma y_2+ y_2  \bar{\gamma}+ \alpha \bar{y}_1 - y_1 \alpha  \\
\alpha^*y_1 - \bar{y}_1\alpha^*   -\bar{\gamma}\bar{y}_2 - \bar{y}_2\gamma       &     \alpha^*y_2- \bar{y}_2\alpha   - [ \bar{\gamma},\bar{y}_1]
\end{pmatrix}  \right) \right| \nonumber  \\
& \leq   4  (   \| \gamma \|_1   +  \| \alpha \|_2   ) \|X\|_{\mathrm{res}}  \|Y\|_{\res} \nonumber \\
& \leq 4 \| \Gamma_0 \|_{1,2}      \|X\|_{\mathrm{res}}   \|Y\|_{\mathrm{res}} \, .  \label{estimate 12 res}
\end{align}
The second term can be estimated as follows
\begin{align*}
|s_-(X,Y)|  
  =   2 \left| \Im \Tr(\bar{x}_2 y_2)     \right|   \leq 2 \|x_2 \|_2 \|y_2\|_2 \leq  \|X\|_{\mathrm{res}}   \|Y\|_{\mathrm{res}} \, .  
\end{align*}
From these estimates we obtain that $s_\Gamma$ is continuous.  
 
We claim that $s_{\Gamma_0}$ is a coboundary. Notice that the space of finite-rank operators is dense in the Banach algebra $\cB_{1,2}(\fH\oplus \fH, D)$ given in Remark \ref{1 2 banach lg}. 
For $\Gamma_{00}$ a finite-rank operator, it clearly holds that $\Tr((X\Gamma_{00})Y)=\Tr(Y(X\Gamma_{00}))$, which implies that $s_{\Gamma_{00}}(X,Y)=-\Tr([X,Y]i\Gamma_{00}):=f_{\Gamma_{00}}([X,Y])$. Similar to the above estimates, one can see that $f_{\Gamma_0}(Z)=-\Tr(Z i \Gamma_0)$, where $\Gamma_0 \in \cB_{1,2}(\fH\oplus \fH, D)$, satisfies $|f_{\Gamma_0}(Z)| \leq  \|\Gamma_0\|_{1,2} \|Z\|_{\res}$. 
Combining this with \eqref{estimate 12 res}, we can take the limit of finite-rank operators in the norm $\| \, \cdot \, \|_{1,2}$ to get that $s_{\Gamma_0}(X,Y)=f_{\Gamma_0}([X,Y])$, which proves our claim. 

A simple computation shows that $s_-=-s_+$. Since $s_{\Gamma_0}$ is a coboundary and $s_+$ is a  2-cocyle, it follows that $s_{\Gamma}$ is a  2-cocyle. This also shows that that $s_{\Gamma}$ and $-s_+$ are  cohomologous.    
\end{proof}


\begin{lem}\label{lema 2 symplectic}
$
\ubg=\{ X \in \ub   :  s_\Gamma(X,Y)=0, \, \forall Y \in \ub \, \}.
$
\end{lem}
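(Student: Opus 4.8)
The plan is to deduce the statement from two facts already available in the excerpt, without ever having to diagonalize $\Gamma$. First, by Lemma~\ref{lema 1 sympl} the form $s_\Gamma$ is a continuous $2$-cocycle of $\ub$, hence alternating: $s_\Gamma(Y,X)=-s_\Gamma(X,Y)$. Combined with the definition $s_\Gamma(X,Y)=\Tr(X\,\delta_\Gamma(Y))$, this gives the identity I will lean on,
$$
s_\Gamma(X,Y)=-s_\Gamma(Y,X)=-\Tr\bigl(Y\,\delta_\Gamma(X)\bigr),\qquad X,Y\in\ub .
$$
Second, I claim $\delta_\Gamma$ actually maps $\ub$ into the subspace $(\ub)_*=\ub\cap\cB_{1,2}(\fH\oplus\fH,D)$ of Remark~\ref{1 2 banach lg}, so in particular $\delta_\Gamma(X)$ is a Hilbert--Schmidt operator. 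To see this I write $\Gamma=\Gamma_0+P_-$ with $\Gamma_0=\Gamma[\gamma,\alpha]-P_-\in\cB_{1,2}(\fH\oplus\fH,D)$ (using $\gamma\in\cB_1(\fH)$, $\alpha\in\cB_2(\fH)$): then $[iP_-,X]$ has vanishing diagonal blocks, while a one-line block computation — using the entries $x_2\in\cB_2(\fH)$ and $\cB_2(\fH)\cdot\cB_2(\fH)\subseteq\cB_1(\fH)$ — shows the diagonal blocks of $[i\Gamma_0,X]$ are trace-class. The remaining requirements for membership in $(\ub)_*$ (skew-adjointness, $I$-invariance, Hilbert--Schmidt off-diagonal blocks) are already contained in the fact, noted right after the definition of $\delta_\Gamma$, that $\delta_\Gamma(X)\in\ub$.

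With these in hand the inclusion ``$\subseteq$'' is immediate: if $X\in\ubg$ then $X\Gamma=\Gamma X$ by \eqref{ubg as a commutator}, hence $\delta_\Gamma(X)=[i\Gamma,X]=0$, and the displayed identity gives $s_\Gamma(X,Y)=-\Tr(Y\cdot 0)=0$ for every $Y\in\ub$.

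For the reverse inclusion I would proceed as follows. Assume $s_\Gamma(X,Y)=0$ for all $Y\in\ub$ and put $Z:=\delta_\Gamma(X)$. By the observation above, $Z\in(\ub)_*\subseteq\ub$, so $Z$ is itself an admissible choice of $Y$; substituting $Y=Z$ into the displayed identity and using $Z^*=-Z$ yields
$$
0=s_\Gamma(X,Z)=-\Tr(Z\,Z)=\Tr(Z^*Z)=\|Z\|_2^2 .
$$
Hence $Z=\delta_\Gamma(X)=0$, i.e. $[i\Gamma,X]=0$, i.e. $X\Gamma=\Gamma X$, so $X\in\ubg$ by \eqref{ubg as a commutator}.

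The only genuine obstacle — and the step I would write out with care — is the verification that $\delta_\Gamma(X)\in\cB_{1,2}(\fH\oplus\fH,D)$, i.e. that the diagonal blocks of $[i\Gamma,X]$ are trace-class; this is exactly where the ideal structure of g1-pdms ($\gamma\in\cB_1$, $\alpha,x_2\in\cB_2$) enters, and it is what makes $Z$ a legitimate test vector in $\ub$ with finite Hilbert--Schmidt norm. A more hands-on alternative would be to exploit the $\UB$-equivariance of both sides ($s_{W\Gamma W^*}(WXW^*,WYW^*)=s_\Gamma(X,Y)$ and $W\,\ubg\,W^*=\ub^{W\Gamma W^*}$) to reduce to a diagonal $\Gamma=\Gamma[\Lambda,0]$, and then to test $s_\Gamma(X,\cdot)$ against $Y=\diag(y_1,\bar y_1)$ and against purely off-diagonal $Y$ to force $[x_1,\Lambda]=0$ and $x_2(\d1-\Lambda)=\Lambda x_2$, respectively — recovering the explicit description of $\ubg$ from Lemma~\ref{lemma Lie subg g1-pdms}; but the cocycle argument above is shorter and avoids the reduction.
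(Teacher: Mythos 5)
Your proof is correct, and it takes a genuinely different route from the paper's. The paper first invokes the equivariance $s_\Gamma(\Ad_V X,\Ad_V Y)=s_{V^*\Gamma V}(X,Y)$ to reduce to a diagonal $\Gamma=\Gamma[\Lambda,0]$, computes $s_\Gamma(X,Y)=-2\Im\bigl(\Tr(x_1[\Lambda,y_1])+\Tr(x_2(\bar y_2-\{\Lambda,\bar y_2\}))\bigr)$ in block form, and then for the reverse inclusion tests against $y_2=0$ with $y_1=i[x_1,\Lambda]$ and against $y_2=i(x_2(\d1-\Lambda)-\Lambda x_2)$ to force the two commutation relations of \eqref{2nd charac} separately. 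Your test element $Y=\delta_\Gamma(X)$ is exactly the coordinate-free packaging of those two block-wise choices, so the two arguments are close in spirit; but by combining the skew-symmetry of $s_\Gamma$ (available from Lemma \ref{lema 1 sympl}) with the observation that $\delta_\Gamma$ maps $\ub$ into $(\ub)_*$, you avoid the diagonalization and the appeal to the explicit description of $\ubg$ altogether, and you make the structural reason transparent: the radical of $s_\Gamma$ is $\ker\delta_\Gamma$ because $\delta_\Gamma$ lands in a space on which the trace pairing is definite. The one step you rightly flag as needing care --- that the diagonal blocks of $[i\Gamma,X]$ are trace-class --- is in fact already carried out in the paper's proof of Lemma \ref{lema 1 sympl}, where $[\Gamma,Y]$ is computed in blocks and the entries $z_1\in\cB_1(\fH)$, $z_2\in\cB_2(\fH)$ are identified using $\gamma\in\cB_1(\fH)$, $\alpha\in\cB_2(\fH)$ and $\cB_2\cdot\cB_2\subseteq\cB_1$; so you could simply cite that computation. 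What the paper's version buys is a self-contained verification that reuses the spectral description of $\ubg$ already established in Lemma \ref{lemma Lie subg g1-pdms}; what yours buys is brevity, independence from the diagonalization theorem, and a forward inclusion that is a one-line consequence of $\delta_\Gamma(X)=0$.
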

\begin{proof}
Straightforward computations show the following: 
$$
s_\Gamma(\Ad_V(X),\Ad_V(Y))=s_{\Ad_{V^*(\Gamma)}}(X,Y) \, ,  \, \, \, \, \, \, V \in \UB, \, X,Y \in \ub  .
$$
So we may assume that $\Gamma$ is a diagonal operator.
Thus,  to prove one inclusion, we take 
$$
X =  \begin{pmatrix}  x_1   &   x_2 \\  \bar{x}_2  &  \bar{x}_1 \end{pmatrix} \in \ubg, \, \,  \, \, \, Y=\begin{pmatrix}
 y_1 & y_2 \\
\bar{y}_2 &  \bar{y}_1
\end{pmatrix} \in \ub, \, \,  \, \, \,   \Gamma=\begin{pmatrix}   \Lambda   &   0   \\  0  &  \d1 - \Lambda  \end{pmatrix}.
 $$ 
We are going to show that $s_\Gamma(X,Y)=0$. 
We denote by $\lbrace \cdot , \cdot \rbrace$  the anticommutator of operators. 
Then,
\begin{align*}
s_\Gamma(X,Y) & = i \Tr \left(    
\begin{pmatrix}  x_1   &   x_2 \\  \bar{x}_2  &  \bar{x}_1 \end{pmatrix}   
\begin{pmatrix}
 [\Lambda , y_1] & \lbrace  \Lambda , y_2 \rbrace - y_2   \\
  \bar{y}_2   -   \lbrace  \Lambda  ,  \bar{y}_2 \rbrace  & [\bar{y}_1, \Lambda ]
\end{pmatrix}     
\right) \\
& = -2 \Im \left( \Tr(x_1 [\Lambda,y_1]) +   \Tr(x_2(\bar{y}_2 - \lbrace  \Lambda   , \bar{y}_2  \rbrace )) \right). 
\end{align*}
Now we use the characterization of the Lie algebra $\ubg$ proved in \eqref{2nd charac}. Since $\Lambda \in \cB_1(\fH)$ and $x_1 \Lambda=\Lambda x_1$, it follows that the first term vanishes. The second term also vanishes using that $x_2(\d1 - \Lambda) =\Lambda x_2$. 
This proves $s_\Gamma(X,Y)=0$. 

In order to show the reversed inclusion, we take an operator $X \in \ub$ such that $s_\Gamma(X,Y)= 0$ for all $Y \in \ub$.
As usual, we write
$$
X =  \begin{pmatrix}
 x_1 & x_2 \\
\bar{x}_2 & \bar{x}_1
\end{pmatrix}, \, \, \, \, \, \,  Y =  \begin{pmatrix}
 y_1 & y_2 \\
\bar{y}_2 & \bar{y}_1
\end{pmatrix},
$$ 
with $x_1^*=-x_1$, $-x_2^T = x_2 \in \cB_2(\fH)$, $y_1^* = -y_1$ and $-y_2^T = y_2 \in \cB_2(\fH)$. By the same calculation as above, 
we have
\begin{equation}\label{ig to be zero}
0=s_\Gamma(X,Y)=-2 \Im \left( \Tr(x_1 [\Lambda,y_1]) +   \Tr(x_2(\bar{y}_2 - \lbrace  \Lambda   , \bar{y}_2  \rbrace )) \right). 
\end{equation}
First, we take $y_2=0$, which gives
$
0=\Im  \Tr(x_1 [\Lambda,y_1])=\Im \Tr([x_1, \Lambda] y_1) 
$
for every $y_1^* = -y_1$. Put $z:=[x_1 ,\Lambda]$, which satisfies $z=z^*$. Take $y_1=iz$ to get $0=\Im \Tr(ziz)=\|z\|_2^2$, and consequently, 
$z=0$. That is, $x_1 \Lambda=\Lambda x_1$. 

Now we  have that \eqref{ig to be zero} turns into $0=\Im  \Tr(x_2(\bar{y}_2 - \lbrace  \Lambda   , \bar{y}_2  \rbrace ))=\Im \Tr((x_2(\d1-\Lambda)-\Lambda x_2)\bar{y}_2)$, for all $-y_2^T=y_2 \in \cB_2(\fH)$. Note that $z:=x_2(\d1-\Lambda)-\Lambda x_2$ satisfies $-z^T=z \in \cB_2(\fH)$. Then, take $y_2=iz$ to find that $0=\Im \Tr(z (-i\bar{z}))=\Im i\Tr(zz^*)=\|z\|_2^2$. Thus, $\Lambda x_2=x_2(\d1-\Lambda)$. 
Hence $X \in \ubg$. 
\end{proof}

Now we write the symplectic form of Theorem \ref{symplectic form construction} in our context. We take $M=\cO(\Gamma)$, $G=\UB$, $K=\UB^\Gamma$ and $s=s_\Gamma$ in the mentioned theorem. Recall that we may identify $T_\Gamma\cO(\Gamma)=\{ [X,\Gamma] : X \in \ub \}$. So  at the point $\Gamma$, set 
$$
\omega_\Gamma([X, \Gamma] , [Y,\Gamma]):= s_\Gamma(X,Y), \, \, \, \, \, X, Y \in \ub.
$$
Then, for $U \in \UB$,  
$$
\omega_{U\Gamma U^*}([X,U\Gamma U^*],[Y,U\Gamma U^*]):= \omega_\Gamma ([U^*XU,\Gamma],[U^*YU,\Gamma]), \, \, \,  \, \, X,Y \in \ub.
$$
Thus, $\omega=\Sigma(s_\Gamma)$ is defined by $\omega:=\{ \omega_{U\Gamma U^*}\}_{U \in \UB}$.

\begin{teo}\label{symplectic structure}
Let $\Gamma \in \DF$. Then $(\cO(\Gamma), \Sigma(s_\Gamma))$ is a weakly symplectic homogeneous space. Furthermore, if the spectrum of $\Gamma$ is finite, then $(\cO(\Gamma),\Sigma(s_\Gamma))$ is a strongly symplectic homogeneous space.
\end{teo}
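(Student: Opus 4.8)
The plan is to realize $\Sigma(s_\Gamma)$ through the general construction recalled in Theorem~\ref{symplectic form construction}, applied with $G=\UB$, $K=\UB^\Gamma$, $M=\cO(\Gamma)$ and the cocycle $s=s_\Gamma$. For the weak assertion I would just check the hypotheses of that theorem. First, $s_\Gamma$ is a continuous real $2$-cocycle of $\ub$ by Lemma~\ref{lema 1 sympl}. Second, its radical $\{X\in\ub:s_\Gamma(X,\cdot)=0\}$ equals $\ubg=\operatorname{Lie}(\UB^\Gamma)$ by Lemma~\ref{lema 2 symplectic}. Third, $s_\Gamma$ is $\Ad_{\UB^\Gamma}$-invariant: the identity $s_\Gamma(\Ad_V X,\Ad_V Y)=s_{\Ad_{V^*}(\Gamma)}(X,Y)$ from the proof of Lemma~\ref{lema 2 symplectic} gives, for $V\in\UB^\Gamma$, $\Ad_{V^*}(\Gamma)=V^*\Gamma V=\Gamma$. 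Since $\cO(\Gamma)\cong\UB/\UB^\Gamma$ is a smooth homogeneous space by Theorem~\ref{g1pdm smooth homog spaces}, Theorem~\ref{symplectic form construction} then yields the invariant closed $2$-form $\omega=\Sigma(s_\Gamma)$ with $\omega_\Gamma([X,\Gamma],[Y,\Gamma])=s_\Gamma(X,Y)$, weakly non-degenerate exactly because radical and isotropy Lie algebra coincide.

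For the strong statement I assume $\sigma(\Gamma)$ is finite. Using $\Ad_U\colon s_\Gamma\mapsto s_{U^*\Gamma U}$ together with $\UB$-invariance of $\omega$, I may conjugate and assume $\Gamma=\Gamma[\Lambda,0]$ is diagonal, and it then suffices to show that $v\mapsto\omega_\Gamma(v,\cdot)$ is a topological isomorphism $T_\Gamma\cO(\Gamma)\to(T_\Gamma\cO(\Gamma))^*$. It is injective by weak symplecticity, so by the open mapping theorem only surjectivity is at stake. I would identify $T_\Gamma\cO(\Gamma)$ with the closed complement $\mathfrak{m}_\Gamma=\ker(\cE_\Gamma)$ of $\ubg$ in $\ub$ via $X\mapsto[X,\Gamma]$, which is a topological isomorphism by Lemma~\ref{lemma Lie subg g1-pdms} and the description of tangent spaces in Theorem~\ref{g1pdm smooth homog spaces}; this reduces the claim to showing that every $\psi\in(\mathfrak{m}_\Gamma)^*$ has the form $X\mapsto s_\Gamma(X,Y_1)$ for some $Y_1\in\mathfrak{m}_\Gamma$.

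The three ingredients I would use are: (i) finiteness of $\sigma(\Gamma)$ makes $\Lambda$ finite rank, hence the $(1,1)$-entry $x_1$ of any element of $\mathfrak{m}_\Gamma$, having vanishing $\{p_i\}$-diagonal blocks, is finite rank with rank bounded in terms of $\rank(\d1-p_0)$; therefore $\mathfrak{m}_\Gamma\subseteq(\ub)_*$ and the norms $\|\cdot\|_{\res}$ and $\|\cdot\|_{1,2}$ are equivalent on $\mathfrak{m}_\Gamma$; (ii) a direct block computation with the explicit forms of $\ubg$ and $\mathfrak{m}_\Gamma$ from the proof of Lemma~\ref{lemma Lie subg g1-pdms} gives $\Re\Tr(WX)=0$ for all $W\in\ubg$, $X\in\mathfrak{m}_\Gamma$, the traces converging because $\sigma(\Gamma)$ is finite; (iii) by the proof of Proposition~\ref{closed range vs finite spectrum0}, $\ran(\delta_\Gamma)=\mathfrak{m}_\Gamma$, whence $\delta_\Gamma(\mathfrak{m}_\Gamma)=\mathfrak{m}_\Gamma$. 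Given $\psi\in(\mathfrak{m}_\Gamma)^*$, by (i), Hahn--Banach and the duality $((\ub)_*)^*\cong\ub$ of Remark~\ref{1 2 banach lg}, there is $Y_0\in\ub$ with $\psi(X)=\Re\Tr(XY_0)$ for all $X\in\mathfrak{m}_\Gamma$; decomposing $Y_0=Z_0+W_0$ with $Z_0\in\mathfrak{m}_\Gamma$, $W_0\in\ubg$, and writing $Z_0=\delta_\Gamma(Y_1)$ with $Y_1\in\mathfrak{m}_\Gamma$ by (iii), I obtain from $s_\Gamma(X,Y)=\Tr(X\delta_\Gamma(Y))$ and (ii)
\[
s_\Gamma(X,Y_1)=\Re\Tr\bigl(X\delta_\Gamma(Y_1)\bigr)=\Re\Tr(XZ_0)=\Re\Tr(XY_0)-\Re\Tr(XW_0)=\psi(X),\qquad X\in\mathfrak{m}_\Gamma .
\]
Thus $v\mapsto\omega_\Gamma(v,\cdot)$ is onto; being bounded and bijective it is a topological isomorphism, and by $\UB$-invariance the same holds at every point, so $(\cO(\Gamma),\Sigma(s_\Gamma))$ is strongly symplectic.

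The weak assertion is essentially bookkeeping once Lemmas~\ref{lema 1 sympl} and~\ref{lema 2 symplectic} are available. The real work lies in the strong case, and there the main obstacle is steps (i)--(ii): showing that finiteness of $\sigma(\Gamma)$ compresses the tangent model $\mathfrak{m}_\Gamma$ into the predual $(\ub)_*$ with an equivalent norm, and verifying the orthogonality $\ubg\perp\mathfrak{m}_\Gamma$ with respect to $\Re\Tr$, since this is precisely what lets one correct the Hahn--Banach representative $Y_0$ back into $\ran(\delta_\Gamma)$. The closed-range input of Proposition~\ref{closed range vs finite spectrum0} and the predual duality of Remark~\ref{1 2 banach lg} then do the rest.
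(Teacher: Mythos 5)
Your argument is correct and follows the paper's proof in all essentials: the weak case is exactly Theorem~\ref{symplectic form construction} combined with Theorem~\ref{g1pdm smooth homog spaces} and Lemmas~\ref{lema 1 sympl}, \ref{lema 2 symplectic}, while the strong case rests, as in the paper, on the inclusion $T_\Gamma\cO(\Gamma)\subseteq i(\ub)_*$ forced by finite spectrum, the comparison of the quotient, restricted and $\|\cdot\|_{1,2}$ norms, Hahn--Banach, and the duality $((\ub)_*)^*\cong\ub$ of Remark~\ref{1 2 banach lg}. The only divergence is that your steps (ii)--(iii) (the $\Re\Tr$-orthogonality of $\ubg$ and $\mathfrak{m}_\Gamma$, and $\delta_\Gamma(\mathfrak{m}_\Gamma)=\mathfrak{m}_\Gamma$) are a correct but superfluous detour: since $T_\Gamma\cO(\Gamma)=\{[X,\Gamma]:X\in\ub\}$ with $X$ ranging over all of $\ub$, the Hahn--Banach representative $Z_0\in i\ub$ already gives $f([Y,\Gamma])=\Tr(iZ_0[i\Gamma,Y])=\omega_\Gamma([iZ_0,\Gamma],[Y,\Gamma])$ without having to project the representative back into $\mathfrak{m}_\Gamma$, which is how the paper concludes.
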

\begin{proof}
All the necessary conditions to apply Theorem \ref{symplectic form construction}  have been proved in Theorem \ref{g1pdm smooth homog spaces}, and Lemmas  \ref{lema 1 sympl} and \ref{lema 2 symplectic}. Hence  $(\cO(\Gamma), \Sigma(s_\Gamma))$ becomes a weakly symplectic homogeneous space.

In order to show the second assertion, notice that by the invariance of the symplectic form it is enough to prove that for $X \in \ub$ the injective map 
$$
T_\Gamma \cO(\Gamma) \to T_\Gamma ^*\cO(\Gamma), \, \, \, \, \, [X,\Gamma ] \mapsto \Tr(X[i\Gamma, \, \cdot \, ]),
$$ 
is also surjective. In view of  \eqref{tangent space as range case 1} and \eqref{tangent space as range case 2} we note that 
$\ran(\delta_\Gamma)=T_\Gamma \cO(\Gamma) \subseteq i (\ub)_* \subseteq \cB_{1,2}(\fH \oplus \fH , D)$ if $\Gamma$ has finite spectrum. Since $\|x\| \leq \|x\|_1$ for any operator $x$, we derive the following relations between norms on the tangent space: for $X \in \ub$,
$$
\|[X,\Gamma] \|_{\Gamma}=\inf \{ \| X + Y\|_{\res}  : Y\Gamma = \Gamma Y   \} \leq \| [X, \Gamma] \|_{\res} \leq \| [X,\Gamma] \|_{1,2} \, .
$$ 
Now take $f \in T_\Gamma ^* \cO(\Gamma)$. From the previous relations between the norms, it follows that $f$ must be also continuous with respect to the norm $\| \, \cdot \, \|_{1,2}$. By the Hahn-Banach theorem, there exists a functional $\tilde{f} \in (i (\ub)_*)^*$ such that
$\tilde{f}|_{T_\Gamma \cO(\Gamma)}=f$. According to Remark \ref{1 2 banach lg} $ii)$, we have $(i (\ub)_*)^* \cong i \ub$, so that there is an operator $Z_0 \in i \ub$ such that $\tilde{f}=\Tr(Z_0  \, \cdot \,)$. Therefore, we get
$f([Y,\Gamma])= \tilde{f}([Y,\Gamma])  =\Tr( Z_0 [Y, \Gamma])=\Tr(iZ_0 [i \Gamma, Y])$, where $i Z_0 \in \ub$.
\end{proof}

\subsection{Orbits of g1-pdms as K\"ahler homogeneous spaces}\label{Complex struct}







We now show that orbits of g1-pdms are K\"ahler homogenous spaces of $\UB$ using a characterization of these spaces in Lie algebraic terms.
Thus we will construct  K\"ahler polarizations. We consider the finite and infinite spectrum cases separately.    

\begin{rem}
$i)$ Notice that $(\ub)_\C=\mathfrak{gl}_2(\fH \oplus \fH,I,D)=:\fG$ (see \eqref{complexif Lie alg}), meanwhile $X \mapsto \overline{X}=-X^*$ is the involution on $\fG$ whose set of fixed points is $\ub$. 
\end{rem}

\begin{rem}
 In the following lemma we assume that $\Gamma$ is expressed as 
\begin{equation}\label{diag form kahler}
\Gamma=\begin{pmatrix}  \Lambda  & 0  \\   0  &   \d1  - \Lambda \end{pmatrix}, \, \, \, \, \, \, \, \,  \Lambda=\sum_{i=1}^{r} \lambda_i p_i \, ,
\end{equation}
where $\{ p_i\}_{i=1}^{r+1}$ ($r < \infty$) is a decomposition of the identity with the properties stated in Remark \ref{proj ubog diag}   for $i=1, \ldots, r$, but we  modify the notation $p_{r+1}:=p_0$  and $\lambda_{r+1}:=\lambda_0=0$. In particular, recall that each $p_i$ is the projection onto the finite-dimensional subspace $\ker(\Lambda - \lambda_i \d1)$ for $i=1, \ldots, r$, and $p_{r+1}$ is the projection onto the infinite-dimensional subspace $\ker(\Lambda)$.  
 Further, we suppose that the eigenvalues satisfy $\lambda_i > \lambda_j$ if $i <j$, for all $i,j=1, \ldots, r+1$.  
\end{rem}

\begin{lem}[Finite spectrum]\label{fin spec polar}
Suppose that $\Gamma=\Gamma[\Lambda,0] \in \DF$ has finite spectrum and $\Gamma$ can be written as in \eqref{diag form kahler}. Then the following hold:
\begin{itemize}
\item[i)] If $\frac{1}{2} \notin \sigma(\Gamma)$, then 
$$
\fP:=\left\{  \begin{pmatrix} x   &   0      \\  y       &  - x^T          \end{pmatrix} \in \fG :   p_i x p_j=0 \text{ if }  i=1, \ldots, r+1, \, j <i     \right\}
$$
is a strongly K\"ahler polarization of $\ub$ in $s_\Gamma$. 
\item[ii)] If $\frac{1}{2} \in \sigma(\Gamma)$, then 
$$
\fP:=\left\{           \begin{pmatrix} x   &   z      \\  y       &  - x^T          \end{pmatrix} \in \fG :  p_i x p_j=0 \text{ if }  i=1, \ldots, r+1, \, j <i , \, z= p_1 z p_1               \right\}
$$
is a strongly K\"ahler polarization of $\ub$ in $s_\Gamma$. 
\end{itemize} 
\end{lem}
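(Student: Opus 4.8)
The goal is to verify the three defining properties of a (strongly) Kähler polarization for the complex subalgebra $\fP \subseteq \fG = (\ub)_\C$ with respect to the cocycle $s_\Gamma$: (a) $\fP$ is a closed complex Lie subalgebra of $\fG$; (b) $\fP + \overline{\fP} = \fG$ and $\fP \cap \overline{\fP} = (\ubg)_\C$ (so the polarization is "complementary", inducing a complex structure on $T_\Gamma\cO(\Gamma)\cong \fG/(\fP+\overline{\fP})^{\perp}$ appropriately); and (c) the positivity/non-degeneracy condition for $s_\Gamma$, namely that the Hermitian form $(X,Y) \mapsto i\, s_\Gamma(X, \overline{Y})$ is positive definite (strongly, since $\Gamma$ has finite spectrum) on $\fP/(\fP\cap\overline{\fP})$. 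Since all statements are invariant under conjugation by Bogoliubov transformations (as in Lemma~\ref{lema 2 symplectic}), working with the diagonal form \eqref{diag form kahler} is legitimate.

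First I would write $\fP$ explicitly in block-spectral coordinates using the decomposition $\{p_i\}_{i=1}^{r+1}$: an element of $\fP$ has $x_1$-block "block upper triangular" ($p_i x p_j = 0$ for $j<i$, i.e. only $i\le j$ survive), $x_{21}$-block arbitrary Hilbert--Schmidt, $x_{12}$-block zero (case i) or supported on $p_1\,\cdot\,p_1$ (case ii), and lower-right block $-x^T$ forced by membership in $\fG$. That $\fP$ is a closed subspace is immediate from these linear conditions. For the Lie algebra property I would check closure under the commutator: the upper-triangularity of the $x$-block is preserved by matrix multiplication of block-upper-triangular matrices; the vanishing (resp. $p_1$-localization) of the $(1,2)$-block and the $\fG$-constraint $x_{22}=-x_{11}^T$ are preserved by a short bracket computation — this is routine but must be done for the two cases, using $p_i = p_i^T = \bar p_i$. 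To identify $\overline{\fP}$, recall $\overline{X} = -X^*$; conjugating the defining conditions turns "block upper triangular" into "block lower triangular" for the $x$-block and moves the arbitrary block from $(2,1)$ to $(1,2)$. Then $\fP \cap \overline{\fP}$ consists of block-diagonal $x$ (i.e. $x = \sum p_i x p_i$) with the $(2,1)$- and $(1,2)$-blocks both zero (case i) or both $p_1$-localized (case ii) — which by \eqref{2nd charac} and the description of $\ubg$ in the proof of Lemma~\ref{lemma Lie subg g1-pdms} is exactly $(\ubg)_\C$. That $\fP + \overline{\fP} = \fG$ follows by decomposing an arbitrary block matrix into its upper-triangular, diagonal, and lower-triangular parts for the $x$-block, and splitting the off-diagonal $(1,2)$ and $(2,1)$ blocks between $\fP$ and $\overline{\fP}$ (in case ii the $p_1$-localized part of the $(1,2)$-block lands in $\fP\cap\overline{\fP}$, and its complement is distributed).

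The core computation — and the step I expect to be the main obstacle — is the positivity of $i\, s_\Gamma(X,\overline X)$ on $\fP$ and its strict positivity modulo $\fP\cap\overline{\fP}$. Using the formula derived in the proof of Lemma~\ref{lema 2 symplectic}, for $X = \begin{pmatrix} x_1 & x_2 \\ \bar x_2 & \bar x_1\end{pmatrix}$ one has $s_\Gamma(X,\overline X) = -2\,\Im\big(\Tr(x_1[\Lambda, (\overline X)_1]) + \Tr(x_2((\overline{\overline X})_2 - \{\Lambda, (\overline{\overline X})_2\}))\big)$; substituting $\overline X = -X^*$ and expanding in the spectral projections gives, after collecting terms, an expression of the shape $\sum_{i<j}(\lambda_i - \lambda_j)\,\|p_i x_1 p_j\|_2^2 \;+\; \sum_{i,j}(\text{sign depending on }\lambda_i+\lambda_j-1)\,\|p_i x_2 p_j\|_2^2$ over exactly the index ranges allowed in $\fP$. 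Since the eigenvalues are ordered $\lambda_i > \lambda_j$ for $i<j$ with $\lambda_i \in (0,\tfrac12]$, each coefficient $\lambda_i - \lambda_j$ is strictly positive for $i<j$, and $1 - \lambda_i - \lambda_j > 0$ except precisely when $\lambda_i = \lambda_j = \tfrac12$ (the $(1,1)$-block, which is excluded from $\fP$ in case i and lies in $\fP\cap\overline{\fP}$ in case ii). Hence the Hermitian form is a sum of positive multiples of squared Hilbert--Schmidt norms, which vanishes exactly on $(\ubg)_\C \cap \fP = \fP\cap\overline{\fP}$; this gives positive definiteness on the quotient. Strongness (i.e. that the induced map to the dual of $\fP/(\fP\cap\overline{\fP})$ is a topological isomorphism, not merely injective) follows because $\Gamma$ has finite spectrum: the finitely many coefficients are bounded away from $0$, so the form is equivalent to the restricted-norm squared on the relevant blocks, exactly as in the strong-symplectic part of Theorem~\ref{symplectic structure}. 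I would close by remarking that the two cases are handled by the same computation, the only difference being the treatment of the $p_1\,\cdot\,p_1$ block of $x_2$, which in case ii contributes zero to $s_\Gamma(X,\overline X)$ and is therefore correctly placed inside $\fP\cap\overline{\fP}$.
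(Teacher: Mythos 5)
Your overall strategy coincides with the paper's: reduce to the diagonal form \eqref{diag form kahler}, describe $\fP$ block-by-block in the spectral projections $\{p_i\}_{i=1}^{r+1}$, verify that $\fP$ is a closed complemented subalgebra with $\fP+\overline{\fP}=\fG$ and $\fP\cap\overline{\fP}=(\ubg)_\C$, and establish positivity of $-i\,s_\Gamma(X,\overline{X})$ by expanding in the blocks $p_i(\,\cdot\,)p_j$. Your decomposition of the contribution of the $(2,1)$-block as $\sum_{i,j}(1-\lambda_i-\lambda_j)\|p_iyp_j\|_2^2$ is correct and in fact slightly sharper than the paper's cruder estimate $B\geq(1-2\lambda_1)\|y\|_2^2$, and the $x$-block term $2\sum_{i<j}(\lambda_i-\lambda_j)\|p_ixp_j\|_2^2$ agrees with the paper's. (One small slip: in case $i)$ it is the $(1,2)$-block $z$ that is excluded from $\fP$, not the potentially degenerate component of $y$; in case $i)$ no coefficient $1-\lambda_i-\lambda_j$ vanishes simply because $\tfrac12\notin\sigma(\Gamma)$.)

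There are, however, genuine omissions. Most importantly, your list of ``the three defining properties'' leaves out the isotropy condition $s_\Gamma(\fP\times\fP)=0$, which is the first half of \eqref{com pol} and is an essential hypothesis of Theorem \ref{kahler en terminos de alg Lie}; the paper devotes a separate computation to it. It is not automatic: for $X_1,X_2\in\fP$ one must show that $\Tr\bigl(x_1[\Lambda,x_2]-x_1^T[\Lambda,x_2^T]\bigr)=0$, which uses the upper-triangularity of $x_1,x_2$ together with cyclicity of the trace to reduce everything to the diagonal blocks $p_i(\,\cdot\,)p_i$, where the two traces cancel; in case $ii)$ there are additional cross terms pairing $z$ against $y$, which vanish only because $z=p_1zp_1$ and $\Lambda p_1=\tfrac12 p_1$. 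Secondly, you do not verify $\Ad_U\fP\subseteq\fP$ for $U\in\UB^\Gamma$, the third condition in \eqref{complex and p}, which is needed for the induced complex structure to be well defined on the orbit. Finally, your argument for strongness (``the form is equivalent to the restricted-norm squared on the relevant blocks'') is not the relevant criterion: strongness in the sense of Theorem \ref{kahler en terminos de alg Lie} is the surjectivity of $[X,\Gamma]\mapsto\omega_\Gamma([X,\Gamma],\,\cdot\,)$ onto $T_\Gamma^*\cO(\Gamma)$, and the paper obtains it by invoking the strongly symplectic statement of Theorem \ref{symplectic structure}, which rests on the duality $((\ub)_*)^*\cong\ub$ rather than on a norm equivalence of the Hermitian form.
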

\begin{proof}
$i)$ We say that  an operator $x$ is upper triangular with respect to the decomposition of the identity $\{ p_i\}_{i=1}^{r+1}$ if $ p_i x p_j=0$  if   $i=1, \ldots, r+1$, $j <i$. Observe that  that $\fP$ is a closed subalgebra of $\fG$ since it is clearly a complex subspace, and
$$
\left[\begin{pmatrix}  x_1   &   0  \\  y_1   &   -x_1^T \end{pmatrix},   \begin{pmatrix}  x_2   &   0  \\  y_2   &   -x_2^T \end{pmatrix} \right] 
=\begin{pmatrix}  [x_1 , x_2]  &   0  \\  w   &   -[x_1,x_2]^T \end{pmatrix}, 
$$
where $[x_1, x_2]$ is upper triangular with respect to the decomposition of the identity $\{ p_i\}_{i=1}^{r+1}$ if $x_1$, $x_2$ are, and $w=y_1 x_2 - x_1^Ty_2 - y_2 x_1 + x_2^Ty_1$ satisfies $w^T=-w \in \cB_2(\fH)$ if $y_i^T=-y_i \in \cB_2(\fH)$, $i=1,2$.  Also it is clear that $\fP$ is closed in $\fG$, and $\fP$ admits the  closed complement 
$$
\fN:=\left\{  \begin{pmatrix} x   &   z      \\  0       &  - x^T          \end{pmatrix} \in \fG :   p_i x p_j=0 \text{ if }  i=1, \ldots, r+1, \, j \geq i    \right\}.
$$
Recall the description of $\ubg$ in the proof of Lemma \ref{lemma Lie subg g1-pdms}, which after taking into account the notation change $p_0:=p_{r+1}$, leads to 
$$
(\ubg)_\C=\left\{  \begin{pmatrix} x    &   0     \\   0     &   -x^T  \end{pmatrix} \in \fG : x=\sum_{i=1}^{r+1}p_i xp_i    \right\}.   
$$
Straightforward computations show that $\fP + \overline{\fP}=\fG$, $\fP \cap \overline{\fP}=(\ubg)_\C$ and $\Ad_U\fP \subseteq \fP$ for all $U \in \UB^\Gamma$ (see \eqref{isot group sin medio}).

Now take 
$$
X_i=\begin{pmatrix}  x_i   &   0  \\  y_i   &   -x_i^T \end{pmatrix} \in \fP, \, \, \, \, i=1,2.
$$
Recall that $\Tr(x)=\Tr(x^T)$ for any $x \in \cB_1(\fH)$. By this fact we obtain
\begin{align*}
s_\Gamma(X_1,X_2)& =i \Tr ( X_1[\Gamma, X_2]) = i \Tr \left( \begin{pmatrix} x_1[\Lambda,x_2]   &   0   \\  *   &   -x_1^T[\Lambda,x_2^T] 
 \end{pmatrix} \right) \\
& = i \Tr(x_1[\Lambda , x_2] -  x_1^T[\Lambda, x_2^T]) \\
& =i ( \Tr(x_1 \Lambda x_2 - x_2 \Lambda x_1)=i \sum_{i=1}^r \lambda_i (\Tr(p_ix_1p_ix_2p_i) - \Tr(p_ix_2p_ix_1p_i))=0.   
\end{align*}
Hence $s_\Gamma(\fP \times \fP)=0$. 

Let us show the last condition:
$$
-i s_\Gamma(X,\overline{X})>0, \, \, \, \, \, X=\begin{pmatrix} x   &   0      \\  y       &  - x^T          \end{pmatrix} \in \fP \setminus (\ubg)_\C \, .
$$
To this end, recall that $\Lambda=\sum_{i=1}^{r+1}\lambda_i p_i=\sum_{i=1}^r \lambda_i p_i$, and notice
\begin{align*}
-i s_\Gamma(X,\overline{X}) & =-\Tr( X [\Gamma, X^*]) 
= - \Tr \left(  \begin{pmatrix} x   &   0      \\  y       &  - x^T          \end{pmatrix}   
\begin{pmatrix} [\Lambda,x^*]   &   \Lambda y^*  - y^* (\d1 - \Lambda)      \\  0       &  [\Lambda,\bar{x}]          \end{pmatrix}    \right) \\
& = -\Tr \left( \begin{pmatrix}  x[\Lambda, x^*]   &  *     \\ *   &  y\Lambda y^* - yy^* (\d1 - \Lambda) - x^T[\Lambda, \bar{x}] \end{pmatrix} \right) \\
& = - 2 \Tr(x [\Lambda, x^*]) + \Tr(yy^*- (y^*y+yy^*)\Lambda):= A + B.
\end{align*}
In the last line we have used that $\Tr(x [\Lambda, x^*]) =-\Tr(x^T[\Lambda, \bar{x}])$. 
Now observe $x$ is upper triangular with respect to $\{ p_i \}_{i=1}^{r+1}$ if and only if $x=\sum_{i=1}^{r+1} \sum_{j \geq i} p_i x p_j$. 
Then, we have
\begin{align*}
A & = - 2 \Tr(x [\Lambda, x^*]) = 2 \Tr((xx^*- x^*x) \Lambda) \\
& = \Tr\left(\sum_{i=1}^{r} \sum_{j \geq i} \lambda_i p_i x p_j x^* p_i\right) - \Tr\left(\sum_{i=1}^{r+1} \sum_{j \geq i} \lambda_j p_j x^* p_i x p_j\right)  \\
& = 2 \sum_{i=1}^{r} \sum_{j > i} (\lambda_i - \lambda_j) \|p_i x p_j\|_2^2 .
\end{align*}
Since  $\lambda_1 > \ldots >\lambda_r>\lambda_{r+1}=0$, it follows from this expression  that $A>0$ when $p_i x p_j \neq 0$ for some $i=1, \ldots , r$, $j=i+1, \ldots , r+1$. 

On the other hand, the second term can be estimated as follows
\begin{align*}  
B & =\Tr(yy^*- (y^*y+yy^*)\Lambda) = \|y\|_2^2 - \sum_{i=1}^r (\lambda_i \Tr(y^*y p_i) +  \lambda_i \Tr(yy^* p_i)) \\
& \geq \|y\|_2^2 - \lambda_1 \left(\sum_{i=1}^r \Tr(p_iy^*yp_i) +   \Tr(p_iyy^*p_i) \right) \\
& \geq (1 - 2 \lambda_1 )\| y \|_2^2 .
\end{align*}
The last inequality is a consequence of the fact $\| \sum_{i=1}^r p_i z p_i  \|_1 \leq \| z \|_1 $, for all $z \in \cB_1(\fH)$ (see e.g. \cite[Chap. III Thm. 4.2]{GK60}) and  $\| \sum_{i=1}^r p_i z p_i  \|_1=\Tr(\sum_{i=1}^r p_i z p_i)$ for $z \geq 0$. Notice that $B>0$ if $y \neq 0$ because $\lambda_1=\max_{i=1, \ldots , r} \lambda_i < \frac{1}{2}$. 
We conclude that $-i s(X,\overline{X})>0$ whenever $X \in \fP \setminus (\ubg)_\C$, which occurs when $p_i x p_j \neq 0$ for some $i=1, \ldots , r$, $j=i+1, \ldots , r+1$, or when $y \neq 0$. Hence $\fP$ is a  polarization of $\ub$ in $s_\Gamma$, which is strong by Theorem \ref{symplectic structure}. 

\medskip

\noi $ii)$ The definition of $\fP$ is changed for this case where $\frac{1}{2} \in \sigma(\Gamma)$. In order to see that $\fP$ is a subalgebra of $\fG$, we consider the four entries $[X_1, X_2]_{ij}$ of the following operator:
$$
[X_1, X_2]= \begin{pmatrix} x_1   &   z_1      \\  y_1       &  - x_1^T          \end{pmatrix} 
\begin{pmatrix} x_2   &   z_2      \\  y_2       &  - x_2^T          \end{pmatrix}  -   
\begin{pmatrix} x_2   &   z_2      \\  y_2       &  - x_2^T          \end{pmatrix}       
\begin{pmatrix} x_1   &   z_1      \\  y_1       &  - x_1^T          \end{pmatrix}. 
$$
First, we see that
$$
[X_1,X_2]_{11}=[x_1,x_2] + p_1(z_1p_1y_2 - z_2p_1y_1),
$$
where both terms are clearly upper triangular with respect to the decomposition $\{ p_i\}_{i=1}^{r+1}$. Next note 
\begin{align*}
[X_1,X_2]_{12} & =x_1 z_2 - z_1 x_2^T - (x_2 z_1 - z_2 x_1^T) \\
& =p_1(x_1p_1 z_2 + x_2p_1 z_1 - z_1p_1 x_2^T  - z_2 p_1 x_1^T )p_1 :=p_1 w p_1,
\end{align*}
where $w^T=-w$. The next term is given by
$$
[X_1,X_2]_{21}=y_1 x_2 - x_1^Ty_2 - (y_2x_1 - x_2^Ty_1).
$$
Here we observe that $[X_1,X_2]_{21} \in \cB_2(\fH)$ because $y_i \in \cB_2(\fH)$, and $[X_1,X_2]_{21}^T=- [X_1,X_2]_{21}$.
The last term satisfies
$$
[X_1,X_2]_{22}=[x_1^T, x_2^T] + (y_1p_1 z_2 - y_2 p_1 z_1)p_1=-[X_1,X_2]_{11}^T.
$$
This completes the proof that $\fP$ is a subalgebra of $\fG$. Clearly, a closed supplement of $\fP$ in $\fG$ is given by
$$
\fN:=\left\{  \begin{pmatrix} x   &   z      \\  0       &  - x^T          \end{pmatrix} \in \fG :   p_i x p_j=0 \text{ if }  i=1, \ldots, r+1, \, j \geq i, \, \text{ and } p_1zp_1=0      \right\}.
$$
In this case, the complexification of $\ubg$ turns out to be
\begin{equation}\label{compl ubg sec case}
(\ubg)_\C=\left\{  \begin{pmatrix} x    &   z     \\   y     &   -x^T  \end{pmatrix} \in \fG : x=\sum_{i=1}^{r+1}p_i xp_i, \, z=p_1zp_1, \, z=-z^T, \, y=p_1yp_1, \, y=-y^T    \right\}.   
\end{equation}
Again the reader can check the three conditions $\fP + \overline{\fP}=\fG$, $\fP \cap \overline{\fP}=(\ubg)_\C$ and $\Ad_U\fP \subseteq \fP$ for all $U \in \UB^\Gamma$ (see \eqref{isot group sec case}). Now we show that $s_\Gamma(X_1,X_2)=0$, for all $X_1 , X_2 \in \fP$. 
For  notice
\begin{align*}
s_\Gamma(X_1, X_2) & = \Tr\left(  \begin{pmatrix} x_1   &   z_1      \\  y_1       &  - x_1^T          \end{pmatrix} 
\left[i \begin{pmatrix} \Lambda   &   0      \\  0       &  \d1 - \Lambda          \end{pmatrix}, 
\begin{pmatrix} x_2   &   z_2      \\  y_2       &  - x_2^T          \end{pmatrix}  \right] \right)  \\
& = i\Tr(x_1 [\Lambda , x_2] - x_1^T[\Lambda, x_2^T])  +   i\Tr(z_1( (\d1 - \Lambda)y_2 - y_2\Lambda)) + i\Tr(y_1(\Lambda z_2 - z_2(\d1 - \Lambda))) \\
& =0 
\end{align*}
This follows because the first term vanishes as in the previous case. For the  second term observe that $z_1=p_1z_1p_1$, where $p_1$ is the projection onto $\ker(\Lambda - \frac{1}{2})$. Therefore,
$$
\Tr(z_1( (\d1 - \Lambda)y_2 - y_2\Lambda))=\Tr(z_1 p_1((\d1 - \Lambda)y_2 - y_2 \Lambda)p_1)=0.
$$
For the third term, note $\Lambda z_2 - z_2(\d1 - \Lambda)=\frac{1}{2} (p_1 z_2p_1 - p_1 z_2p_1 )=0$. 

 In order to prove the last condition of a  K\"ahler polarization, we compute  for $X \in \fP \setminus (\ubg)_\C$
\begin{align*}
-i s_\Gamma(X,\overline{X}) & =-\Tr( X [\Gamma, X^*]) 
= - \Tr \left(  \begin{pmatrix} x   &   z      \\  y       &  - x^T          \end{pmatrix} 
\left[ \begin{pmatrix} \Lambda   &   0      \\  0       &  \d1 - \Lambda          \end{pmatrix} , 
\begin{pmatrix} x^*   &   y^*      \\  z^*      &  - \bar{x}          \end{pmatrix} \right] \right) \\
& = - 2 \Tr(x [\Lambda, x^*]) + \Tr(yy^*- (y^*y+yy^*)\Lambda) - \Tr(z((\d1 - \Lambda)z^*-z^*\Lambda)) \\
& := A + B + C.
\end{align*}
Again since $z=p_1 z p_1$, it follows that $C=0$. Using the description of $(\ubg)_\C$  in \eqref{compl ubg sec case}, we have that $X \in \fP \setminus (\ubg)_\C$ if and only if $p_i x p_j \neq 0$ for some $i=1, \ldots , r$, $j>i$, or when $y \neq 0$, $y \neq p_1 y p_1$. The term $A$ can be estimated as in the first case, but for the other term note $\|y\|_2^2=\|p_1 y \|_2^2 + \|(\d1 -p_1) y\|_2^2$, which yields 
\begin{align*}  
B & =\Tr(yy^*- (y^*y+yy^*)\Lambda) = \|(\d1-p_1)y\|_2^2 - \sum_{i=2}^r (\lambda_i \Tr(y^*y p_i) +  \lambda_i \Tr(yy^* p_i)) \\
& \geq \|(\d1 - p_1)y\|_2^2 - \lambda_2 \left(\sum_{i=2}^r \Tr(p_i(y(\d1-p_1))^*y(\d1-p_1)p_i) +   \Tr(p_i(\d1-p_1)y((\d1-p_1)y)^*p_i) \right) \\
& \geq (1 - 2 \lambda_2 )\| (\d1-p_1)y \|_2^2 ,
\end{align*}
where   $\lambda_2=\max_{i=2, \ldots , r} \lambda_i < \frac{1}{2}$ and $(\d1-p_1)y \neq 0$, for $y \neq p_1yp_1$. Hence
$-i s_\Gamma(X,\overline{X})>0$ for $X \in \fP \setminus (\ubg)_\C$. 
\end{proof}

Now we consider the infinite spectrum case.

\begin{rem}
  For following lemma we suppose that $\Gamma$ has infinite spectrum and $\ker(\Gamma)=\{0\}$. Thus, it can be written as  
\begin{equation}\label{diag form kahler2}
\Gamma=\begin{pmatrix}  \Lambda  & 0  \\   0  &   \d1  - \Lambda \end{pmatrix}, \, \, \, \, \, \, \, \,  \Lambda=\sum_{i=1}^\infty \lambda_i p_i \, ,
\end{equation}
where $\lambda_i \neq 0$, $i \geq 1$, and  $\{ p_i\}_{i=1}^\infty$ is a decomposition of the identity with the properties stated in Remark \ref{proj ubog diag}. Thus, each $p_i$ is the projection onto the finite-dimensional subspace $\ker(\Lambda - \lambda_i \d1)$ for $i \geq 1$. We again  suppose that the eigenvalues satisfy $\lambda_i > \lambda_j$ if $i <j$, for all $i,j \geq 1$.  
\end{rem}

\begin{lem}[Infinite spectrum]\label{inf spec polar}
Suppose that $\Gamma=\Gamma[\Lambda,0] \in \DF$ has infinite spectrum, $\mathrm{ker}(\Gamma)=\{ 0\}$ and $\Gamma$ can be expressed as in \eqref{diag form kahler2}.   Then the following hold:
\begin{itemize}
\item[i)] If $\frac{1}{2} \notin \sigma(\Gamma)$, then 
$$
\fP:=\left\{  \begin{pmatrix} x   &   0      \\  y       &  - x^T          \end{pmatrix} \in \fG :   p_i x p_j=0 \text{ if }  i \geq 1, \, j <i     \right\}
$$
is a weakly K\"ahler polarization of $\ub$ in $s_\Gamma$. 
\item[ii)] If $\frac{1}{2} \in \sigma(\Gamma)$, then 
$$
\fP:=\left\{           \begin{pmatrix} x   &   z      \\  y       &  - x^T          \end{pmatrix} \in \fG :  p_i x p_j=0 \text{ if }  i \geq 1, \, j <i , \, z= p_1 z p_1               \right\}
$$
is a weakly K\"ahler polarization of $\ub$ in $s_\Gamma$. 
\end{itemize} 
\end{lem}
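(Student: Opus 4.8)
The plan is to imitate the proof of Lemma~\ref{fin spec polar} almost line by line, verifying that $\fP$ satisfies the hypotheses of the weak K\"ahler polarization criterion from the appendix; the essential difference from the finite-spectrum case is that the spectral resolution $\{p_i\}_{i\geq 1}$ of $\Lambda$ is now infinite and $\ker(\Gamma)=\{0\}$ (so there is no infinite-rank block $p_0$), which forces the series below to be read in the strong operator topology and rules out the uniform estimates that made the polarization strong in Lemma~\ref{fin spec polar}. We are in the weakly symplectic situation of Theorem~\ref{symplectic structure}, so only the weak criterion is available.

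First I would dispatch the algebraic conditions, which go through unchanged. As before $\fP$ is a complex subspace, closed in $\fG$ because each requirement $p_ixp_j=0$ (and $z=p_1zp_1$ in case~ii)) is a closed condition; and the block computation of $[X_1,X_2]$ shows $\fP$ is a subalgebra, the only new point being that to see $p_i[x_1,x_2]p_j=0$ for $j<i$ one inserts $\d1=\sum_k p_k$ (SOT) between the two factors — legitimate since $\rank(p_i)<\infty$ makes the truncated sums converge in norm after left multiplication by $p_i$ — and observes that every surviving term would need $k\geq i$ and $k\leq j$. The complexification $(\ubg)_\C$ is read off from \eqref{isot group sin medio}--\eqref{isot group sec case} with $p_0$ deleted, and the identities $\fP\cap\overline{\fP}=(\ubg)_\C$ and $\Ad_U\fP\subseteq\fP$ for $U\in\UB^\Gamma$ are obtained exactly as in Lemma~\ref{fin spec polar}. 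Likewise $s_\Gamma(X_1,X_2)=0$ for $X_1,X_2\in\fP$: expand $\Tr(X_1[i\Gamma,X_2])$ in blocks, use $\Tr(x)=\Tr(x^T)$, and use $z_1=p_1z_1p_1$ together with $\Lambda z_2-z_2(\d1-\Lambda)=0$ in case~ii); finiteness of every trace is guaranteed by Lemma~\ref{lema 1 sympl}.

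The core is the positivity $-is_\Gamma(X,\overline X)>0$ for $X\in\fP\setminus(\ubg)_\C$, with upper-left block $x$ and lower-left block $y$. Splitting $-is_\Gamma(X,\overline X)=A+B$ in case~i) and $=A+B+C$ in case~ii), where $C=0$ because $z=p_1zp_1$ and $\Lambda p_1=\tfrac12 p_1$, upper triangularity gives $x=\sum_{i\leq j}p_ixp_j$ and one computes the convergent (possibly infinite) sum $A=-2\Tr(x[\Lambda,x^*])=2\sum_{i<j}(\lambda_i-\lambda_j)\|p_ixp_j\|_2^2\geq0$, using $\lambda_i>\lambda_j$ for $i<j$, with equality iff $x$ is block diagonal. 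For $B=\Tr(yy^*-(y^*y+yy^*)\Lambda)$ the estimate is even simpler than in the finite case: since $\sum_{i\geq1}p_i=\d1$ we get $\sum_i\lambda_i\Tr(p_iy^*yp_i)\leq\lambda_1\|y\|_2^2$ (and similarly for $yy^*$), so $B\geq(1-2\lambda_1)\|y\|_2^2\geq0$ with $\lambda_1=\sup_i\lambda_i<\tfrac12$ in case~i); in case~ii), writing $\|y\|_2^2=\|p_1y\|_2^2+\|(\d1-p_1)y\|_2^2$ gives $B\geq(1-2\lambda_2)\|(\d1-p_1)y\|_2^2$ with $\lambda_2<\tfrac12$. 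Since $A,B\geq0$ with simultaneous equality exactly when $X\in(\ubg)_\C$, we conclude $-is_\Gamma(X,\overline X)>0$ off $(\ubg)_\C$; thus $\fP$ is a K\"ahler polarization of $\ub$ in $s_\Gamma$, and it is only \emph{weak} because these bounds control $x$ merely through the Hilbert--Schmidt norms of its blocks and not through $\|X\|_{\res}$ — consistent with $\ran(\delta_\Gamma)$ being non-closed and $\Sigma(s_\Gamma)$ being only weakly symplectic for infinite $\sigma(\Gamma)$ (Proposition~\ref{closed range vs finite spectrum0}, Theorem~\ref{symplectic structure}).

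I expect the genuine obstacle to lie in the analytic input behind the \emph{weak} criterion rather than in the algebra or the sign computations: unlike in the finite-spectrum case, $\fP$ is not topologically complemented in $\fG$ — the strict triangular truncation with respect to $\{p_i\}_{i\geq1}$ is an unbounded operation on $\cB(\fH)$, so there is no closed supplement $\fN$ as in Lemma~\ref{fin spec polar} and $\fP+\overline{\fP}\subsetneq\fG$. Consequently one must check exactly the (weaker) spanning/density hypothesis on $\fP+\overline{\fP}$ that the appendix's weak K\"ahler polarization theorem demands, and it is precisely here that the infinitely many spectral projections and the missing norm control enter; everything else is a faithful transcription of the finite-spectrum argument.
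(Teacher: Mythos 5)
Most of your proposal tracks the paper's argument faithfully: the verification that $\fP$ is a closed subalgebra, the identities $\fP\cap\overline{\fP}=(\ubg)_\C$ and $\Ad_U\fP\subseteq\fP$ for $U\in\UB^\Gamma$, the vanishing $s_\Gamma(\fP\times\fP)=0$, and the positivity computation $-is_\Gamma(X,\overline{X})=A+B\,(+C)$ with $A=2\sum_{i<j}(\lambda_i-\lambda_j)\|p_ixp_j\|_2^2$ and $B\geq(1-2\lambda_1)\|y\|_2^2$ are exactly what the paper does, including the role of $\ker(\Gamma)=\{0\}$ in guaranteeing $\lambda_i-\lambda_j>0$ for $i<j$. (The paper is more careful than you on one point: rearranging $\Tr(xx^*\Lambda)$ into the double sum requires justification, since $x=\sum_{i}\sum_{j\geq i}p_ixp_j$ converges only in the WOT; it invokes WOT-continuity on the unit ball of $y\mapsto\Tr(yz)$ for $z\in\cB_1(\fH)$, the partial sums being uniformly bounded because for $x\in\fP$ they equal $Q_NxQ_M$ with $Q_N=\sum_{i\leq N}p_i$.)

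The genuine gap is your last paragraph. In this paper a \emph{weakly} K\"ahler polarization is still, by definition, a \emph{complemented} subalgebra $\fP$ of $\fG_\C$ satisfying \eqref{complex and p} verbatim, in particular $\fP+\overline{\fP}=\fG_\C$; the adjectives weak/strong refer only to whether the map $X+\fK\mapsto\omega_p(X+\fK,\,\cdot\,)$ is onto the cotangent space (Theorem \ref{kahler en terminos de alg Lie}), not to any relaxation of the spanning or complementation conditions, and no ``density'' variant of these conditions appears in the appendix. By asserting that $\fP$ admits no closed supplement and that $\fP+\overline{\fP}\subsetneq\fG$, and then deferring to a hypothesis that does not exist, you leave unproved --- indeed you deny --- one of the defining properties of the object you are constructing, so the proof is incomplete as written. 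The paper's own route at this step is to exhibit the explicit closed supplement $\fN$ consisting of the elements of $\fG$ with vanishing lower-left block and with upper-left block $x$ satisfying $p_ixp_j=0$ for $j\geq i$, justified by writing $x=\sum_{i}\sum_{j\geq i}p_ixp_j+\sum_{i}\sum_{j<i}p_ixp_j$ with both series WOT-convergent; this is what yields both the complementation and $\fP+\overline{\fP}=\fG_\C$. Your observation that triangular truncation relative to an infinite decomposition of the identity is an unbounded operation on $\cB(\fH)$ is a substantive challenge to precisely this step of the paper's proof; if you want to press it, it must be confronted directly (by repairing the decomposition argument or by reproving the criterion of Theorem \ref{kahler en terminos de alg Lie} under genuinely weaker hypotheses), not absorbed into the word ``weakly''.
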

\begin{proof}
$i)$ The proof follows the same lines as the finite-dimensional case with the following additional remarks. For a decomposition of the identity $\{ p_i \}_{i \geq 1}$ and  $x \in \cB(\fH)$, it is well-known that $x=\sum_{i,j\geq 1}p_ixp_j$, where the convergence of this series is in the WOT (weak operator topology). Furthermore, $\|\sum_{i=1}^N\sum_{j=1}^Mp_ixp_j\| \leq \|x\|$ for all $M,N\geq 1$. 
Notice that $x$ is then upper triangular with respect to $\{ p_i \}_{i \geq 1}$ if and only if $x=\sum_{i=1}^{\infty} \sum_{j \geq i} p_i x p_j$. From these facts we may write $x=\sum_{i=1}^{\infty} \sum_{j \geq i} p_i x p_j + \sum_{i=1}^{\infty} \sum_{j < i} p_i x p_j$, where both series converge in the WOT. Thus, the following subspace
$$
\fN:=\left\{  \begin{pmatrix} x   &   z      \\  0       &  - x^T          \end{pmatrix} \in \fG :   p_i x p_j=0 \text{ if }  i\geq 1, \, j \geq i       \right\}
$$
becomes a closed supplement for $\fP$ in $\fG$. Using that the adjoint and multiplication by a fixed operator are both continuous maps in the WOT, all but the last of the conditions of a  K\"ahler polarization are straightforward to check.

The last condition reads $-i s_\Gamma(X,\overline{X})=- 2 \Tr(x [\Lambda, x^*]) + \Tr(yy^*- (y^*y+yy^*)\Lambda)>0$ for all $X \in \fP \setminus (\ubg)_\C$. Following the finite spectrum case treated in Lemma \ref{fin spec polar}, in order to deal with the first term of $-i s_\Gamma(X,\overline{X})$ we need to compute $\Tr((xx^*-x^*x)\Lambda)$, where 
$x=\sum_{i=1}^{\infty} \sum_{j \geq i} p_i x p_j$ is now a WOT-convergent series whose partial sums are bounded in norm. To handle 
$\Tr(xx^*\Lambda)$ we apply twice the fact that functionals of the form $f(y)= \Tr( y z)$, $z \in \cB_1(\fH)$, $y \in \cB(\fH)$, are WOT continuous on the unit ball (\cite[Thm. 5.3]{EK98}). That is, we have 
\begin{align*}
\Tr(xx^* \Lambda) & = \sum_{i=1}^{\infty} \sum_{j \geq i} \Tr(p_ixp_j x^*\Lambda) \\
& = \sum_{i=1}^{\infty} \sum_{j \geq i} \Tr\left(p_ixp_j \left(\sum_{k=1}^{\infty} \sum_{l \geq k} p_lx^*p_k\right)\left(\sum_{s=1}^\infty \lambda_s p_s\right)\right) \\
& = \sum_{i=1}^{\infty} \sum_{j \geq i} \lambda_i \Tr(p_ixp_j x^* p_i).
\end{align*}
The term $\Tr(x^*x\Lambda)$ can be handled analogously. Hence, 
$$
- 2 \Tr(x [\Lambda, x^*]) =2 \sum_{i=1}^{\infty} \sum_{j > i} (\lambda_i - \lambda_j) \|p_i x p_j\|_2^2 .
$$
Since $\ker(\Gamma)=\{0\}$ and $\lambda_1 > \lambda_2 > \ldots $, we conclude that $\lambda_i - \lambda_j>0$ for all $j>i$. The second term of $-i s_\Gamma(X,\overline{X})$ can be treated as in the finite spectrum case. 
Finally, we observe that Theorem \ref{symplectic structure} implies now that $\fP$ is weakly polarization because the spectrum of $\Gamma$ is infinite.

\medskip

\noi $ii)$ Similar remarks apply to this case. 
\end{proof}




The previous results now imply the following.

\begin{teo}\label{kahler structures}
Let $\Gamma \in \DF$. Then the following assertions hold:
\begin{itemize}
\item[i)] If $\Gamma$ has finite spectrum, then $(\cO(\Gamma),\Sigma(s_\Gamma))$ is a strongly K\"ahler homogeneous space.
\item[ii)] If $\Gamma$ has infinite spectrum and $\mathrm{ker}(\Gamma)=\{0\}$, then $(\cO(\Gamma),\Sigma(s_\Gamma))$ is a weakly K\"ahler homogeneous space.
\end{itemize}
\end{teo}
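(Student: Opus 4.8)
The plan is to combine the structural results already established with the general Lie-theoretic criterion for K\"ahler homogeneous spaces phrased in terms of K\"ahler polarizations (see \cite{B06, N04} and the geometric background collected in the appendix): if $\fP$ is a K\"ahler polarization of $\ub$ in the continuous $2$-cocycle $s_\Gamma$ whose radical is $\ubg$, then the reductive homogeneous space $\cO(\Gamma)\cong\UB/\UB^\Gamma$ carries a $\UB$-invariant integrable complex structure $J$ such that $\Sigma(s_\Gamma)(\cdot,J\,\cdot)$ is an invariant Riemannian metric, i.e.\ $(\cO(\Gamma),\Sigma(s_\Gamma))$ is a K\"ahler homogeneous space, strong or weak according to the polarization.

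First I would reduce to the diagonal case. By Theorem \ref{ubog diagonalization} there is $W\in\UB$ with $W\Gamma W^{*}$ diagonal, of the form \eqref{diag form kahler} if $\sigma(\Gamma)$ is finite or \eqref{diag form kahler2} if $\sigma(\Gamma)$ is infinite with trivial kernel; the assignment $Z\mapsto WZW^{*}$ is a diffeomorphism $\cO(\Gamma)\to\cO(W\Gamma W^{*})$ of homogeneous spaces of $\UB$ intertwining the two actions, and by the covariance identity $s_\Gamma(\Ad_V X,\Ad_V Y)=s_{\Ad_{V^{*}}(\Gamma)}(X,Y)$ from the proof of Lemma \ref{lema 2 symplectic} it pulls $\Sigma(s_{W\Gamma W^{*}})$ back to $\Sigma(s_\Gamma)$. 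Since $\sigma(\Gamma)$ and $\ker(\Gamma)$ are invariant under unitary conjugation, it suffices to treat diagonal $\Gamma=\Gamma[\Lambda,0]$.

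For item $i)$ I would then check the hypotheses of the criterion one by one: \emph{(a)} $\cO(\Gamma)\cong\UB/\UB^\Gamma$ is a reductive homogeneous space and $\UB^\Gamma$ is a Lie subgroup of $\UB$ with Lie algebra $\ubg$ --- this is Theorem \ref{g1pdm smooth homog spaces} together with Lemma \ref{lemma Lie subg g1-pdms}; \emph{(b)} $\ubg=\{X\in\ub:s_\Gamma(X,\ub)=0\}$ is precisely the radical of the cocycle --- this is Lemma \ref{lema 2 symplectic}; \emph{(c)} $\Sigma(s_\Gamma)$ is a strongly symplectic invariant $2$-form --- this is the finite-spectrum part of Theorem \ref{symplectic structure}; \emph{(d)} the subspace $\fP$ of Lemma \ref{fin spec polar} is a strong K\"ahler polarization of $\ub$ in $s_\Gamma$, i.e.\ a closed, complemented, complex subalgebra of $\fG=(\ub)_\C$ with $\fP+\overline{\fP}=\fG$, $\fP\cap\overline{\fP}=(\ubg)_\C$, $\Ad_U\fP\subseteq\fP$ for $U\in\UB^\Gamma$, $s_\Gamma|_{\fP\times\fP}=0$ and $-i\,s_\Gamma(X,\overline{X})>0$ for $X\in\fP\setminus(\ubg)_\C$ --- this is exactly the content of Lemma \ref{fin spec polar}. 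Applying the criterion yields that $(\cO(\Gamma),\Sigma(s_\Gamma))$ is a strongly K\"ahler homogeneous space. Item $ii)$ is obtained verbatim, using the weakly symplectic part of Theorem \ref{symplectic structure} and replacing Lemma \ref{fin spec polar} by Lemma \ref{inf spec polar}; since the polarization there is only weak, one gets a weakly K\"ahler homogeneous space.

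The substantive content --- the positivity condition $-i\,s_\Gamma(X,\overline{X})>0$ off $(\ubg)_\C$ and the exact identification $\fP\cap\overline{\fP}=(\ubg)_\C$, which are what make the complex structure well defined and the associated metric nondegenerate (resp.\ positive definite) --- has already been disposed of in Lemmas \ref{fin spec polar} and \ref{inf spec polar}. Thus at the level of this theorem the only remaining point is bookkeeping: the polarization, complex structure and metric must be transported coherently from the base point $\Gamma$ to all of $\cO(\Gamma)$. This is legitimate because the reductive distribution $\{\mathfrak{m}_{\Gamma_1}\}_{\Gamma_1\in\cO(\Gamma)}$ and the associated conditional expectations depend smoothly on the base point by Theorem \ref{g1pdm smooth homog spaces}, so the translates $\Ad_U\fP$ assemble into a smooth $\UB$-invariant complex structure, its integrability being automatic from $\fP$ being a subalgebra. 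In other words, the main obstacle was already resolved in the preceding two lemmas, and what is left here is the routine verification that all hypotheses of the general criterion line up.
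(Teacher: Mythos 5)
Your proposal is correct and follows essentially the same route as the paper: reduce to a diagonal $\Gamma[\Lambda,0]$ via Theorem \ref{ubog diagonalization}, invoke the polarizations constructed in Lemmas \ref{fin spec polar} and \ref{inf spec polar}, and conclude by the Lie-algebraic criterion of Theorem \ref{kahler en terminos de alg Lie}. You spell out the verification of the hypotheses (radical of the cocycle, symplectic form, invariance of the polarization) in more detail than the paper's two-line argument, but the substance is identical.
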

 \begin{proof}
According to Theorem \ref{ubog diagonalization} there exists a diagonal g1-pdm $\Gamma=\Gamma[\Lambda, 0]$ satisfying the conditions of Lemmas \ref{fin spec polar} or \ref{inf spec polar}. Then the assertion of each item $i)$ or $ii)$ follows by Theorem \ref{kahler en terminos de alg Lie}. 
\end{proof}

\begin{rem}
Each orbit $\cO(\Gamma)$ can be endowed with a complex manifold structure  such that   the maps $\alpha_U: \cO(\Gamma) \to \cO(\Gamma)$, $\alpha_U(\Gamma_1)=U\Gamma_1U^*$, $\Gamma_1 \in \cO(\Gamma)$,  are holomorphic for every $U\in \UB$. Indeed, this follows by noting that the requirements of  Remark \ref{complex} have been be checked in Lemmas \ref{fin spec polar} and \ref{inf spec polar} without any assumption on the kernel of $\Gamma$.
\end{rem}




\appendix

\section{Appendix}\label{apendice hfb banach manifolds}

\subsection{Hartree-Fock-Bogoliubov  theory}\label{HFB}

In this appendix we provide the necessary background on Hartree-Fock-Bogoliubov (HFB) theory. Most of the material  is presented following \cite{BBKM14, BLS94, VFJ09}, where the reader can find the proofs omitted here. We  prefer the basis-dependent approach considered in these references, which has  the benefit of dealing with linear operators. A basis independent approach, where one deals with  conjugate-linear operators, is also used  in the literature (see, e.g., \cite{S07}).





\medskip

\noi \textit{Fock space formalism.}  
The one-particle Hilbert space $\fH$ is a complex separable Hilbert space endowed with an inner product $\PI{ \, }{ \, }$, which is linear in the second variable and conjugate-linear in the first. The fermionic Fock space $\cF$ over $\fH$ is defined as the orthogonal sum
$$
\cF:=\cF[\fH]=\bigoplus_{N=0}^{\infty} \cF^{(N)},
$$
where $\cF^{(N)}=\bigwedge_{n=1}^N \fH$ is the $N$-fold  skew-symmetric tensor product of $\fH$ for $N\geq 1$; and $\cF^{(0)}=\C \Omega$, where $\Omega=(1,0,0, \ldots) \in \cF$ is the  vacuum vector. Thus, each vector in $\cF^{(N)}$  is a possibly infinite combination of simple vectors of the form
$$
f_1 \wedge \ldots \wedge f_N := \frac{1}{\sqrt{N!}}\sum_{ \pi \in S_N} \text{sign}(\pi) f_{\pi(1)} \otimes f_{\pi(2)} \otimes \ldots \otimes f_{\pi(N)}, \, \, \, \,
f_i \in \fH,
$$
where $S_N$ denotes the symmetric group and $\text{sign}(\pi)$ the signature of the permutation $\pi$. 
Notice that the inner product between simple vectors on each $\cF^{(N)}$ is given by
$$
\PI{f_1 \wedge \ldots \wedge f_N}{g_1 \wedge \ldots \wedge g_N}_{\cF^{(N)}}=  \det(\PI{f_i}{g_j})_{1 \leq i,j \leq  N}, \, \, \, \, f_i, g_i \in \fH. 
$$
The Fock space $\cF$ becomes a Hilbert space equipped with the inner product defined by 
$$
\PI{\Psi_1}{\Psi_2}_\cF:=\sum_{N=0}^\infty\PI{f_1^{(N)}}{f_2^{(N)}}_{\cF^{(N)}}, \, \, \, \, \Psi_i=(f_i^{(0)}, f_i^{(1)}, \ldots), \, \, \,  i=1,2.
$$
We now introduce some useful linear operators on the Fock space. 
Associated to each vector $f \in \fH$, there are  creation operators $c^*(f)$ and annihilation operators $c(f)$  acting on $\cF$. The creation operator is defined on simple vectors by 
$$
c^*(f)\Omega=f; \, \, \, \, \, \,
 c^*(f)(f_1 \wedge \ldots \wedge f_N):=f \wedge f_1 \wedge \ldots \wedge f_N.
$$
Then, $c^*(f)$ extends to $\cF$ by linearity. The annihilation operator $c(f)$ is defined as the adjoint $c(f):=(c^*(f))^*$. 
It is noteworthy that $c^*(f)$ and $c(f)$ are linear and conjugate-linear in $f$, respectively. 
Further, we note $c(f)\Omega=0$. Using that $f_1 \wedge \ldots \wedge f_N = c^*(f_1) c^*(f_2) \ldots c^*(f_N)\Omega$, it follows that any vector of $\cF$ can be expressed as a limit of polynomials in creation operators acting on the vacuum vector. Creation and annihilation operators satisfy the \textit{canonical anticommutation relations (CARs):}
\begin{align*}
& \{c(f), c^*(g)\}=\PI{f}{g} \d1_\cF ; \\
& \{c^*(f), c^*(g)\}=\{c(f), c(g)\}=0.  
\end{align*}
for every  $f,g \in \fH$. Here $\{ A,B\}:=AB+BA$ is the anticommutator of  $A$ and $B$. 
Let $\cB(\cF)$ denote the algebra of bounded linear operators on $\cF$. 
In particular, the CARs imply that $c(f), c^*(f) \in \cB(\cF)$. Moreover, $c(f)$ and $c^*(f)$ are a partial isometries on $\cF$, and the map $c$  isometric, i.e. $\|c(f)\| = \|f\|$ and $\|c^*(f)\|=\|f\|$.
The unital $C^*$-algebra generated by $\{ c(f) : f \in \fH \}$ in $\cB(\cF)$ is a concrete construction of the \textit{CAR algebra} $\cA=\cA(\fH)$ of $\fH$. 


Another important operator on the Fock space is the \textit{particle number operator} $\widehat{\dN}$, which is defined by
$$
\widehat{\dN}:= \bigoplus_{N \geq 0} N \d1_{\cF^{(N)}}.
$$ 
This is an unbounded operator, its maximal domain is given by
those vectors
$\Psi= (f^{(0)}, f^{(1)}, \ldots )$ such that $\sum_{N=1}^\infty N^2 \|  f^{(N)}\|^2_{\cF^{(N)}}< \infty$.

\medskip

\noi \textit{Bogoliubov transformations.}  Throughout, we fix an orthonormal basis $\{ \varphi_k\}_{k \geq 1}$ of $\fH$.
 Associated to this basis, we define a  conjugation on $\fH$ by 
\begin{equation}\label{J complex c}
I_0 f:=\bar{f}:=\sum_{k \geq 1} \bar{\mu}_k \varphi_k
\end{equation}
if $f=\sum_{k \geq 1} \mu_k \varphi_k$. By a conjugation we mean that $I_0$ is a conjugate-linear isometry acting on $\fH$ such that $I_0^2=\d1$.  Then, for any operator $x$, we define the operators 
$$
\bar{x}:=I_0 x I_0 \, , \, \, \, \, \, \text{and} \, \, \, \, \, \,  x^T:=(\bar{x})^*=\overline{x^*}.
$$ 

Let $\cB(\fH)$ be the algebra of bounded operators on $\fH$. Consider two operators $u,v \in \cB(\fH)$ satisfying the following relations
\begin{align}
& uu^* + vv^*=\d1 =u^*u + v^T\bar{v}; \label{Bog 1}  \\
& u^*v + v^T\bar{u}=0=uv^T + v u^T \label{Bog 2}. 
\end{align}
Notice that equations \eqref{Bog 1} and \eqref{Bog 2} are equivalent to say that the following operator
\begin{equation}\label{Bog}
U=\begin{pmatrix}  u   &   v \\ \bar{v}  &   \bar{u}  \end{pmatrix}
\end{equation}
is unitary on $\fH \oplus \fH$. Unitary operators on $\fH \oplus \fH$ having this form are known as \textit{Bogoliubov transformations}.

For each Bogoliubov transformation $U$  as in \eqref{Bog},  the operators $d^*(f):=c^*(uf)+ c(v\bar{f})$, $d(f):=(d^*(f))^*$, also fulfill the CARs. 
Then there is a unique $*$-automorphism
$\beta_U$ of the CAR algebra  such that $\beta_U(c(f))=d(f)$, for all $f \in \fH$ (\cite[Thm. 5.2.5]{BR81}).   
It is of interest to know whether there exists  
a unitary operator that implements the automorphism $\beta_U$ on the Fock space. 
There is a well-known characterization  (see \cite{A70}):

\begin{teo}\label{bog trans and unit impl}
Let $U$ be a Bogoliubov transformation as in  \eqref{Bog}. Then there exists a unitary operator $\dU:=\dU_U:\cF \to \cF$ such that 
\begin{equation}\label{fock space}
\dU c^*(f) \dU^*= c^*(uf) + c(v\bar{f}), 
\end{equation}
for all $f \in \fH$, if and only if $\Tr(v^*v)<\infty$.
\end{teo}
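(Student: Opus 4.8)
The plan is to prove the Shale--Stinespring theorem in the form stated, namely that the Bogoliubov automorphism $\beta_U$ is unitarily implementable on the fermionic Fock space $\cF$ if and only if $v$ is Hilbert--Schmidt. I would treat the two implications separately, beginning with the easier necessity direction.

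\textbf{Necessity.} Suppose a unitary $\dU$ satisfies \eqref{fock space}. The idea is to test this identity on the vacuum and exploit the CARs. Applying $\dU^*$ to $\Omega$, write $\Psi:=\dU^*\Omega$. For each basis vector $\varphi_k$ one has $d(\varphi_k)\Omega=0$, where $d(f)=\dU^* c(f)\dU$, so $c(u\varphi_k)\Psi + c^*(v\overline{\varphi_k})\Psi$ would have to vanish after re-expressing things; more precisely, the cleanest route is to compute $\|c^*(v\overline{\varphi_k})\dU^*\Omega\|^2$. Using the CARs, $c(g)c^*(g)+c^*(g)c(g)=\|g\|^2\d1$, together with $c(u\varphi_k)\Psi=-c^*(v\overline{\varphi_k})\Psi$ (which follows from $d(\varphi_k)\dU^*\Omega = \dU^* c(\varphi_k)\Omega=0$), one obtains a relation of the form $\sum_k \|v\overline{\varphi_k}\|^2 = \sum_k \|v^*\text{(something)}\|^2 \le \|\Psi\|^2 \cdot(\text{const})$ after summing over $k$; in fact the standard computation yields $\sum_k \langle \Psi, c(v\overline{\varphi_k})c^*(v\overline{\varphi_k})\Psi\rangle$ controlling $\sum_k\|v\overline{\varphi_k}\|^2 = \Tr(v^*v)$ from above, so $v \in \cB_2(\fH)$. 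One must be a little careful to justify interchanging the infinite sum with inner products, but positivity of all terms makes this routine (monotone convergence).

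\textbf{Sufficiency.} Assume $\Tr(v^*v)<\infty$. Here the plan is the explicit construction. First reduce via a polar/Potapov--Ginzburg type normal form: using Proposition \ref{elementary properties}(i), $u$ is Fredholm of index zero, and one can write $U = U_1 \, U_0 \, U_2$ where $U_1,U_2$ are ``rotation-type'' Bogoliubov transformations with $v=0$ (block-diagonal unitaries $\mathrm{diag}(w,\bar w)$, which are manifestly implemented by second quantization $\Gamma(w)$), and $U_0$ is a Bogoliubov transformation in ``standard form'' built from a Hilbert--Schmidt skew-symmetric operator. Since implementability is clearly multiplicative (if $\beta_{U},\beta_{U'}$ are implemented by $\dU,\dU'$ then $\beta_{UU'}$ is implemented by $\dU\dU'$), it suffices to implement $U_0$. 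For $U_0$ one exhibits the implementer explicitly on the vacuum as a ``Gaussian'' vector: if $v u^{-1}$ (on the appropriate finite-codimension subspace) corresponds under $I_0$ to a Hilbert--Schmidt antisymmetric kernel $(\alpha_{ij})$, set
\[
\dU_{U_0}\Omega \;=\; c(\det)^{-1/2}\,\exp\!\Big(\tfrac12\sum_{i,j}\alpha_{ij}\,c^*(\varphi_i)c^*(\varphi_j)\Big)\Omega,
\]
which is a well-defined unit vector in $\cF$ precisely because $\sum_{i,j}|\alpha_{ij}|^2<\infty$ (the series of pair-creation terms converges in $\cF$, and the normalizing determinant $\det(\d1+\alpha^*\alpha)^{1/4}$ is finite by the trace-class property of $\alpha^*\alpha$). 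One then defines $\dU_{U_0}$ on the dense set of polynomials in creation operators applied to $\Omega$ by the requirement \eqref{fock space}, checks it is well-defined using the CARs for the $d$'s, and checks it is isometric with dense range, hence extends to a unitary. The main technical point is verifying that the Gaussian vector above is annihilated by all the $d(\varphi_k)=c(u\varphi_k)+c^*(v\overline{\varphi_k})$, which is a direct computation with creation/annihilation operators acting on the exponential series, again hinging on $\alpha\in\cB_2(\fH)$ so that all manipulations converge.

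\textbf{Main obstacle.} I expect the genuinely delicate step to be the sufficiency direction: making rigorous sense of the exponential (infinite pair-creation) vector and the normalization constant, and proving it lies in $\cF$ and is cyclic/annihilated as claimed — all the convergence there is exactly what the Hilbert--Schmidt hypothesis on $v$ buys, and getting the bookkeeping right (especially the reduction to normal form when $u$ has nontrivial kernel, so that $u$ is only invertible on a cofinite subspace) is where care is needed. The necessity direction and the multiplicativity of implementers are comparatively straightforward. Since this is a classical result (Shale--Stinespring; see \cite{A70}), in the actual write-up one could alternatively cite it, but the above is the self-contained route I would take.
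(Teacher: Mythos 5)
First, a point of comparison: the paper does not prove this statement at all --- Theorem \ref{bog trans and unit impl} appears in the appendix as quoted background, with the proof delegated to Araki \cite{A70}. So there is no internal argument to match; what you have sketched is the standard proof of the classical Shale--Stinespring theorem (necessity by testing the implementing relation on the transformed vacuum, sufficiency by reduction to a normal form and an explicit Gaussian pair-creation vector), and that is the right architecture. The sufficiency half is fine as a plan, with the caveats you already flag (convergence of the exponential vector, which is exactly what $\alpha\in\cB_2(\fH)$ buys since $\|\exp(\tfrac12\sum\alpha_{ij}c_i^*c_j^*)\Omega\|^2=\det(\d1+\alpha^*\alpha)^{1/2}$, and the bookkeeping for $\ker(u)$ via a finite particle--hole factor, using that $u$ is Fredholm of index $0$).

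There is, however, a genuine gap in your necessity direction. Setting $\Psi=\dU\Omega$, the relation $c(u\varphi_k)\Psi=-c^*(v\overline{\varphi_k})\Psi$ together with $\|c^*(g)\Psi\|^2=\|g\|^2-\|c(g)\Psi\|^2$ gives, after summing over $k$, an identity whose error terms are $\sum_k\|c(u\varphi_k)\Psi\|^2$ and $\sum_k\|c(v\overline{\varphi_k})\Psi\|^2$; both are comparable to $\PI{\Psi}{\widehat{\dN}\Psi}$, and the finiteness of that quantity is precisely what is at stake (indeed one finds $\PI{\Psi}{\widehat{\dN}\Psi}=\Tr(v^*v)$ for such a $\Psi$). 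So the claimed bound $\sum_k\|\cdot\|^2\le\|\Psi\|^2\cdot(\text{const})$ is not available, and the obstruction is not an interchange of limits that monotone convergence fixes --- the a priori upper bound is simply absent, and the argument as written is circular. The standard repair is to pass to the lowest nonvanishing particle-number component: let $m$ be minimal with $\psi_m:=\Pi^{(m)}\Psi\neq0$ and project the identity $c(u\varphi_k)\Psi+c^*(v\overline{\varphi_k})\Psi=0$ onto $\cF^{(m+1)}$, obtaining $c^*(v\overline{\varphi_k})\psi_m=-c(u\varphi_k)\psi_{m+2}$. On the fixed sectors $\cF^{(m)}$ and $\cF^{(m+2)}$ the number-operator expectations are trivially finite, so summing over $k$ yields $\Tr(v^*v)\,\|\psi_m\|^2\le\|u\|^2(m+2)\|\psi_{m+2}\|^2+\|v\|^2 m\|\psi_m\|^2<\infty$, whence $v\in\cB_2(\fH)$. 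That projection onto a fixed sector is the missing idea; with it, your outline becomes the standard complete proof.
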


Any such a unitary operator $\dU$ is called a \textit{unitary implementer of $U$}, or simply an \textit{implementer of $U$}; it is uniquely determined up to a scalar factor of modulus one.  The condition of the operator $v$ being a Hilbert-Schmidt operator is usually refer to as the \textit{Shale-Stinespring condition}. The group of all unitary implementers  is also known as the current group, and it is topological group in the strong operator topology (see \cite{A87}).



\medskip

\noi \textit{Quasi-free states.} A state $\o$ on the  algebra $\cB(\cF)$ is a complex linear map such that $\o(\d1_\cF)=1$ and $\o(A^*A)\geq 0$, for all $A \in \cB(\cF)$. In particular, this implies that $\o$ is continuous.  
A state is normal if there exists a density matrix $\rho$ (i.e. $\rho$ is a trace-class operator on $\cF$, $\rho \geq 0$ and $\Tr(\rho)=1$) such that 
$\o(A)=\Tr_\cF(\rho A)$, for all $A \in \cB(\cF)$. Important examples of normal states for us are the pure states, which have the form   $\o(A)=\PI{\Psi}{A \Psi}_\cF$, for all $A \in \cB(\cF)$, and some unit vector $\Psi \in \cF$.



We are interested in the following special class of states. A normal state $\o$ is said to be \textit{quasi-free} if it satisfies Wick's theorem, i.e. given vectors $h_1, \ldots , h_{2n} \in \fH$ and $2n$ creation and annihilation operators $e_1, \ldots, e_{2n} \in \{ \, c^*(h_1), \, c(h_1), \, \ldots, \, c^*(h_{2n}), \,  c(h_{2n}) \, \}$, we have $\o(e_1 \ldots e_{2n-1})=0$ and
$$
\o(e_1 \ldots e_{2n})=\sum_{\pi \in P_{2n}} \text{sign}(\pi) \, \o(e_{\pi(1)}e_{\pi(2)})\ldots \o(e_{\pi(2n-1)}e_{\pi(2n)}),
$$
where $P_{2n}$ denotes the set of all permutations $\pi \in S_{2n}$ such that $\pi(1)<\pi(3) < \ldots < \pi(2n-1)$ and $\pi(2j-1)< \pi(2j)$, for all $1 \leq j \leq n$. For instance, when $n=2$, this means that $
\o(e_1e_2e_3e_4)=\o(e_1e_4)\o(e_2e_3) -\o(e_1 e_3)\o(e_2e_4) + \o(e_1e_2)\o(e_3e_4).
$
Additionally, we will often restrict to the following class of states of physical interest. A state $\o$ is said to have \textit{finite particle number} if 
\begin{equation}\label{def de oN}
\o(\widehat{\dN}):=\sum_{N=1}^\infty N \o(\Pi^{(N)})< \infty. 
\end{equation}
where $\Pi^{(N)}$ is the orthogonal projection onto $\cF^{(N)}$.

\begin{exa}
In the case in which $\o$ is a pure state associated to a vector $\Psi \in \cF^{(N)}$ for some $N \geq 0$, then $\o$ is a quasi-free state
if and only if $\Psi=f_1 \wedge \ldots \wedge f_N$, for some orthonormal vectors $f_1,  \ldots , f_N$ (take $\Psi=\Omega$ when $N=0$). Vectors 
 having this form are known as  \textit{Slater determinants}. Clearly, the corresponding pure state has finite particle number. 
\end{exa}




\begin{nota}
We will use the following notation:
\begin{align*}
\cZ & :=\{ \o : \o \text{ is a normal state} \}; \\
\Z & :=\{ \o \in \cZ : \o \text{ is a quasi-free state with finite particle number} \}.
\end{align*}
\end{nota}




\medskip

\noi \textit{Generalized one-particle density matrices.} Each state $\o$ gives raise to a \textit{generalized one-particle density matrix (g1-pdm)} $\Gamma_\o$, which is defined as the self-adjoint operator on $\fH\oplus \fH$ satisfying
$$
\PI{\begin{pmatrix} f_1 \\  f_2 \end{pmatrix}}{\Gamma_\o \begin{pmatrix} g_1 \\ g_2\end{pmatrix}}_{\fH\oplus \fH}=\o([c^*(g_1)+c(\bar{g}_2)][c(f_1)+c^*(\bar{f}_2)]), \, \, \, f_i \,, \, g_i \in \fH. 
$$
 Since the conjugation $I_0$ might change (or not) when the basis $\{ \varphi_k \}_{k \geq 1}$ changes, the definition of $\Gamma_\o$ depends on the fixed basis. A straightforward computation using the CARs shows that 
\begin{equation}\label{low 1}
0 \leq \Gamma_\o \leq \d1_{\fH\oplus \fH} .
\end{equation}
It is convenient to express $\Gamma_\o$ as a $2 \times 2$ matrix of operators on $\fH\oplus \fH$,
\begin{equation}\label{adm op0}
\Gamma_\o=\begin{pmatrix} \gamma_\o   & \alpha_\o  \\  \alpha^*_\o  &  \d1 - \bar{\gamma}_\o  \end{pmatrix} =:\Gamma_\o[\gamma_\o, \alpha_\o].
\end{equation}
Recalling that $\{ \varphi_k\}_{k \geq 1}$ is our fixed orthonormal basis of $\fH$, we abbreviate $c_k^*:=c^*(\varphi_k)$ and $c_k:=c(\varphi_k)$.  The operator blocks of $\Gamma_\o$ can be expressed as 
\begin{equation}\label{adm op 0}
\PI{\varphi_m}{\gamma_\o \varphi_k}=\o(c_k^* c_m), \, \, \, \, \PI{\varphi_m}{\alpha^*_\o \varphi_k}=\o(c_k^* c_m^*). 
\end{equation}
Notice that 
\begin{equation}\label{adm op1}
\gamma^*_\o=\gamma_\o\, , \, \, \, \,  \alpha^T_\o=-\alpha_\o. 
\end{equation}
The operator $\gamma_\o$ is called \textit{one-particle density matrix (1-pdm)} of the state $\o$. In the special case of the pure state $\o_\Psi(A)=\PI{\Psi}{ A \Psi}_\cF$, for some unit vector $\Psi \in \cF^{(N)}$, the 1-pdm $\gamma_{\o_\Psi}$ satisfies $0 \leq \gamma_{\o_\Psi} \leq \d1$ and it  has trace equal to $N$. Conversely, Lieb \cite{L81} proved that for each self-adjoint operator $\gamma$ on $\fH$ such that $0 \leq \gamma \leq \d1$ and $\Tr(\gamma)=N$, there exists a unit vector $\Psi \in \cF^{(N)}$ such that $\gamma=\gamma_{\o_\Psi}$.   
The 1-pdm is related to the particle number operator. 
Noticing that $\widehat{\dN}=\sum_{k\geq 1} c^*_kc_k$ as a quadratic form, then
\begin{equation}\label{rel trac num}
\o(\widehat{\dN})=\sum_{k \geq 1} \o(c_k^* c_k) =  \Tr(\gamma_\o).
\end{equation}
It follows that $\gamma_\o$ is a trace-class operator if and only if $\Gamma_\o$ is the g1-pdm of a state $\o$ with finite particle number. 
The operator $\alpha_\o$ is called the \textit{pairing matrix}. When $\o(A)=\PI{\Psi}{A \Psi}$, for a vector $\Psi=(f^{(0)}, f^{(1)}, \ldots) \in \cF$, is a (pure) quasi-free state with finite particle number, 
then $\o(\widehat{\dN}^2)<\infty$, and its variance is expressed as
$$
\o(\widehat{\dN}^2) - \o(\widehat{\dN})^2= \sum_{n\geq 0} (n-N)^2\|f^{(N)}\|_{\cF^{(N)}}^2 =2 \Tr(\alpha_\o^* \alpha_\o).
$$
Thus, the Hilbert-Schmidt norm of $\alpha_\o$ measures the spreading of $\Psi$ among the subspaces $\cF^{(N)}$.

\medskip

 One can define generalized one-particle density matrices without reference to a  state, only taking into account the  properties that these operators have when they are associated to a state.
That is, a  g1-pdm can be defined as an operator of the form
 $$
\Gamma=\Gamma[\gamma, \alpha]= \begin{pmatrix} \gamma & \alpha   \\   \alpha^*   & \d1 - \bar{\gamma}  \end{pmatrix} \in \cB(\fH \oplus \fH)
 $$
 satisfying the obvious corresponding conditions \eqref{low 1}  and \eqref{adm op1}.
It is then natural to address the question of which g1-pdms are associated to a state. Using the relation in \eqref{rel trac num}, one can prove the following sufficient condition (\cite[Thm. 2.3]{BLS94}).



\begin{teo}\label{d trace} 
Let $\Gamma=\Gamma[\gamma , \alpha]$ be a g1-pdm such that $\Tr(\gamma)< \infty$. Then there exists a unique quasi-free state $\o$ with finite particle number such that $\Gamma=\Gamma_\o$.
\end{teo}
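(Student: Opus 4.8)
The plan is to build the quasi-free state $\o$ directly from $\Gamma$ and verify the three requirements: normality, the quasi-free (Wick) property, and finite particle number, with uniqueness following from the fact that a quasi-free state is completely determined by its two-point functions. First I would use the diagonalization result for g1-pdms by Bogoliubov transformations (Theorem \ref{ubog diagonalization}): there exists $W \in \UB$ with $W\Gamma W^* = \Gamma[\Lambda, 0]$, where $\Lambda$ is diagonal, $0 \le \Lambda \le \d1$, and crucially $\Tr(\Lambda) = \Tr(\gamma) < \infty$ (the trace is preserved since conjugation by a Bogoliubov transformation corresponds to a unitary implementer on Fock space). Since $W \in \UB$ satisfies the Shale-Stinespring condition, by Theorem \ref{bog trans and unit impl} there is a unitary implementer $\dW$ on $\cF$. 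Thus it suffices to construct a quasi-free state $\o_0$ with finite particle number whose g1-pdm is $\Gamma[\Lambda, 0]$, and then set $\o(A) := \o_0(\dW^* A \dW)$; the implementer intertwines the CARs in the way recorded in \eqref{fock space}, so $\o$ is again quasi-free with finite particle number, and a direct computation with the definition of the g1-pdm shows $\Gamma_\o = W^* \Gamma[\Lambda,0] W = \Gamma$.

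The heart of the matter is therefore the diagonal case $\Gamma[\Lambda, 0]$ with $\Lambda = \diag(\lambda_k)$, $\lambda_k \in [0,1]$, $\sum_k \lambda_k < \infty$, and $\alpha = 0$. Here I would exhibit $\o_0$ explicitly as the normal state given by a density matrix that is an infinite product over the modes: on the $k$-th fermionic mode (the two-dimensional Fock space spanned by $\Omega_k$ and $c_k^*\Omega_k$) take the density matrix $\rho_k = (1-\lambda_k)\,|\Omega_k\rangle\langle\Omega_k| + \lambda_k\, |c_k^*\Omega_k\rangle\langle c_k^*\Omega_k|$, and set $\rho = \bigotimes_{k\ge 1} \rho_k$. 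Because $\sum_k \lambda_k < \infty$, only finitely many $\lambda_k$ exceed $1/2$ and the tail is summable, so this infinite tensor product defines a genuine trace-class operator on $\cF$ with $\Tr(\rho) = 1$ (the tensor-product Fock space $\cF[\fH] = \bigotimes_k \cF[\C\varphi_k]$ is the relevant identification, valid since $\sum\lambda_k<\infty$ keeps us in the physical Fock representation). One then checks directly from \eqref{adm op 0} that $\o_0(c_k^* c_m) = \delta_{km}\lambda_k$ and $\o_0(c_k^* c_m^*) = 0$, i.e. $\gamma_{\o_0} = \Lambda$ and $\alpha_{\o_0} = 0$, and that $\o_0$ satisfies Wick's theorem — this last point follows because $\rho$ factorizes over the modes and a product of single-mode "Gibbs-like" states is quasi-free, a standard fact which can also be seen by computing higher moments directly and matching the Pfaffian/determinant structure. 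Finally $\o_0(\widehat{\dN}) = \sum_k \lambda_k = \Tr(\Lambda) < \infty$, so $\o_0 \in \Z$.

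For uniqueness, suppose $\o$ and $\o'$ are both quasi-free states with finite particle number and $\Gamma_\o = \Gamma_{\o'} = \Gamma$. By Wick's theorem every monomial $\o(e_1\cdots e_{2n})$ in creation and annihilation operators is a polynomial in the two-point functions $\o(e_ie_j)$, which by \eqref{adm op 0} are entries of $\Gamma$; hence $\o$ and $\o'$ agree on all such monomials, therefore on the dense $*$-subalgebra they generate in $\cA$, and by continuity of states on all of $\cA$. Since $\o$ and $\o'$ are normal states on $\cB(\cF)$ agreeing on $\cA$, and a quasi-free state on $\cA$ with finite particle number extends uniquely to a normal state on $\cB(\cF)$ (its density matrix is determined), we conclude $\o = \o'$.

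The step I expect to be the main obstacle is making rigorous the claim that the infinite tensor product $\rho = \bigotimes_k \rho_k$ genuinely defines a trace-class density operator on the ordinary fermionic Fock space $\cF[\fH]$ and that this $\o_0$ really is normal with the asserted two-point functions — one must be careful that summability $\sum_k \lambda_k < \infty$ is exactly what places us in the standard (rather than a non-Fock, GNS) representation, so that the infinite product converges in trace norm after the finitely many "large" modes are isolated. Once that is handled, verifying the Wick property and computing $\gamma_{\o_0}$, $\alpha_{\o_0}$, and $\o_0(\widehat{\dN})$ are routine, and the passage back to general $\Gamma$ via the implementer $\dW$ and the uniqueness argument are straightforward.
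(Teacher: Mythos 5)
Your proposal is essentially correct and follows the same route as the proof of this result in Bach--Lieb--Solovej \cite[Thm.\ 2.3]{BLS94}, which the paper only cites: diagonalize $\Gamma$ by a Bogoliubov transformation, build the quasi-free state explicitly in the diagonal case (your mode-by-mode product density matrix is exactly the Gibbs-type operator $\Pi_0 e^{-H}$, suitably normalized, with $(1+e^{e_k})^{-1}=\lambda_k$), pull back by the unitary implementer, and get uniqueness from Wick's theorem together with irreducibility of the Fock representation (so that normal states agreeing on $\cA$ agree on $\cB(\cF)=\cA''$).

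One slip worth correcting: the parenthetical claim $\Tr(\Lambda)=\Tr(\gamma)$ is false in general, because the particle number is \emph{not} invariant under Bogoliubov conjugation --- from \eqref{action ubog} the new one-particle density matrix is $u\gamma u^*+v\alpha^*u^*+u\alpha v^*+v(\d1-\bar\gamma)v^*$, whose trace differs from $\Tr(\gamma)$ by finite Hilbert--Schmidt contributions (e.g.\ $\cO(P_-)$ contains operators with $\Tr(\gamma)>0$ even though $\Lambda=0$). What you actually need is only $\Tr(\Lambda)<\infty$, and this is part of the conclusion of Theorem \ref{ubog diagonalization} (and follows directly from $\gamma\in\cB_1(\fH)$, $\alpha,v\in\cB_2(\fH)$), so the argument survives unchanged.
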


Any g1-pdm $\Gamma=\Gamma[\gamma, \alpha]$ such that $\Tr(\gamma)< \infty$ is called an \textit{admissible generalized one-particle density matrix}.

\begin{nota}
Let $\cB_1(\fH)$ and $\cB_2(\fH)$ be the ideals of trace-class and Hilbert-Schmidt operators on $\fH$, respectively.  The set of all admissible g1-pdms is denoted by
\begin{equation}\label{g1pdm}
\cD:=\left\{ \begin{pmatrix} \gamma & \alpha   \\   \alpha^*   & \d1 - \bar{\gamma}  \end{pmatrix} \in \cB(\fH \oplus \fH) :   0 \leq \begin{pmatrix} \gamma & \alpha   \\   \alpha^*   & \d1 - \bar{\gamma}  \end{pmatrix}    \leq \d1_{\fH\oplus \fH}, \, \gamma=\gamma^* \in \cB_1(\fH), \, \alpha^T=-\alpha   \right\}. 
\end{equation}
\end{nota}

\begin{rem}\label{d trace rem} 
 $i)$ Note that, by the previous result, there is a bijection between quasi-free states and the (convex) set $\DF$, namely $\Phi: \Z \to \DF$,   $\Phi(\o) = \Gamma_\o$.
Let $U:\fH \oplus \fH \to \fH \oplus \fH$ be a Bogoliubov transformation for which there exists a unitary implementer $\dU_U:\cF \to \cF$.
 For each state $\o \in \Z$, define $\o_U(A):=\o(\dU_U A \dU_U^*)$ for any $A \in \cA$. Using \eqref{fock space}, it easily follows that $\o_U \in \Z$. Furthermore, it can be shown that $\Phi(\o_U)=U^*\Gamma_\o U$.   

\medskip

\noi $ii)$  We claim that $\Gamma=\Gamma[\gamma, \alpha] \in \DF$ implies  $\alpha \in \cB_2(\fH)$. To see this, notice that $0\leq \Gamma \leq \d1_{\fH\oplus \fH}$ is equivalent to
$$
0 \leq \Gamma - \Gamma^2 = \begin{pmatrix} \gamma - \gamma^2 - \alpha \alpha^* & (\d1 - \gamma)\alpha + \alpha (\d1 - \bar{\gamma})   \\   
\alpha^*(\d1 - \gamma) +  (\d1 - \bar{\gamma})\alpha^*   &  \bar{\gamma} - \bar{\gamma}^2-\alpha^*\alpha  \end{pmatrix}
.
$$
In particular, we get that $\gamma \geq \gamma^2 + \alpha \alpha^*$, from which our claim follows by using  $\gamma \in \cB_1(\fH)$.
\end{rem}

We end this collection of results on g1-pdms with the well-known relation between pure states and projections, and a remarkable   diagonalization result (see \cite[Thm. 2.6]{BLS94} and the proof of \cite[Thm. 2.3]{BLS94}).

\begin{teo}\label{proj states}
A quasi-free state $\o$ with finite particle number is a pure state if and only if its g1-pdm $\Gamma_\o \in \cD$ is a projection on $\fH \oplus \fH$
(i.e. $\Gamma_\o^2=\Gamma_\o$).
\end{teo}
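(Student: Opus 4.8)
The plan is to use the Bogoliubov diagonalization of g1-pdms to reduce to the case of a vanishing pairing matrix, and then to identify those quasi-free states with such a g1-pdm whose g1-pdm is a projection as precisely the Slater-determinant states.

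First I would fix $\o\in\Z$ and write $\Gamma_\o=\Gamma[\gamma,\alpha]$. By the diagonalization theorem for g1-pdms (Theorem~\ref{ubog diagonalization}, cf.\ \cite{BLS94}) there is $W\in\UB$ with $W\Gamma_\o W^*=\Gamma[\Lambda,0]$, where $\Lambda$ is trace-class, diagonal in the fixed basis $\{\varphi_k\}$, with $0\le\Lambda\le\d1$. Pick a unitary implementer $\dU_{W^*}$ of $W^*\in\UB$ (it exists by Theorem~\ref{bog trans and unit impl}) and set $\tilde\o:=\o_{W^*}\in\Z$; by the equivariance $\Phi(\o_U)=U^*\Gamma_\o U$ one has $\Gamma_{\tilde\o}=W\Gamma_\o W^*=\Gamma[\Lambda,0]$, so $\alpha_{\tilde\o}=0$. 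Two elementary observations make the reduction work. First, for a normal state on $\cB(\cF)$ the property of being pure coincides with that of being a vector state, and it is preserved under $\o\mapsto\o_U$ (if $\o=\langle\Psi,\cdot\,\Psi\rangle$ then $\o_U=\langle\dU_U^*\Psi,\cdot\,\dU_U^*\Psi\rangle$, and conversely), so $\o$ is pure iff $\tilde\o$ is. Second, $\Gamma_\o=W^*\Gamma[\Lambda,0]W$ is a projection iff $\Gamma[\Lambda,0]$ is, and an immediate computation gives $\Gamma[\Lambda,0]^2=\Gamma[\Lambda,0]$ iff $\Lambda^2=\Lambda$. Hence the theorem is equivalent to: $\tilde\o$ is pure $\iff$ $\Lambda$ is a (necessarily finite-rank) projection.

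For ``$\Lambda$ a projection $\Rightarrow$ $\tilde\o$ pure'' I would take an orthonormal basis $f_1,\dots,f_N$ of $\ran\Lambda$ ($N=\Tr\Lambda<\infty$) and the Slater determinant $\Psi:=f_1\wedge\cdots\wedge f_N$ (with $\Psi=\Omega$ if $N=0$). A short computation with the CARs gives $\gamma_{\o_\Psi}=\sum_i|f_i\rangle\langle f_i|=\Lambda$ and $\alpha_{\o_\Psi}=0$, i.e.\ $\Gamma_{\o_\Psi}=\Gamma[\Lambda,0]=\Gamma_{\tilde\o}$; since $\o_\Psi$ is a quasi-free state with finite particle number (the Slater states of the example in Appendix~\ref{HFB}), injectivity of $\Phi$ forces $\tilde\o=\o_\Psi$, a vector state, hence pure. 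Conversely, if $\tilde\o$ is pure, then $\tilde\o=\o_\Psi$ for some unit vector $\Psi\in\cF$; as $\o_\Psi$ is quasi-free with finite particle number and $\alpha_{\o_\Psi}=0$, the variance identity recalled in Appendix~\ref{HFB} reads $\o_\Psi(\widehat{\dN}^2)-\o_\Psi(\widehat{\dN})^2=2\Tr(\alpha_{\o_\Psi}^*\alpha_{\o_\Psi})=0$, and its intermediate expression, the weighted sum $\sum_{n\ge0}(n-N)^2\|f^{(n)}\|^2$ of sector norms of $\Psi$ with $N=\Tr\Lambda$, then forces $\Psi\in\cF^{(N)}$. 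By the same example, a pure quasi-free state whose vector lies in a single sector $\cF^{(N)}$ comes from a Slater determinant $\Psi=f_1\wedge\cdots\wedge f_N$, whence $\Lambda=\gamma_{\o_\Psi}=\sum_i|f_i\rangle\langle f_i|$ is an orthogonal projection. Together with the reduction above this proves the equivalence.

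The main obstacle is not any single estimate but the structural bookkeeping: one has to invoke the Bogoliubov diagonalization to clear the pairing matrix (this is the essential input), be careful that ``pure'' is understood in $\cB(\cF)$ so that it is the same as being a vector state and is transported correctly by unitary implementers, and exploit the variance identity to pin the vector to a single particle-number sector. Once these are in place, the remaining points — that $\Gamma[\Lambda,0]$ is a projection exactly when $\Lambda$ is, and the CAR computation of the g1-pdm of a Slater determinant — are routine.
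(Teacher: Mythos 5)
Your proof is correct, but note that the paper itself does not prove Theorem \ref{proj states}: it is recalled in the appendix as background, with a pointer to \cite[Thm. 2.6]{BLS94}, so there is no in-paper argument to compare against line by line. Relative to that reference, you share the first move --- diagonalize $\Gamma_\omega$ by a Bogoliubov transformation, transport the state by a unitary implementer (which preserves purity of normal states) and reduce to $\alpha=0$ --- but you finish differently: \cite{BLS94} realizes the diagonalized quasi-free state explicitly as $\Tr(\mathcal{G}\,\cdot\,)$ for a Gibbs-type density matrix $\mathcal{G}$ built from the eigenvalues of $\Lambda$ and reads purity off as the rank-one condition on $\mathcal{G}$, whereas you use the vanishing of the particle-number variance to confine the representing vector to a single sector $\cF^{(N)}$ and then quote the Slater-determinant characterization of pure quasi-free states there. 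Be aware that this characterization (the Example in Appendix \ref{HFB}) is itself stated without proof and is essentially the particle-number-conserving instance of the very theorem you are proving, so your argument is an honest reduction to the Hartree--Fock case rather than a self-contained proof; within the conventions of the appendix, where all these facts are quoted from the literature, that is acceptable. Two minor observations: Theorem \ref{ubog diagonalization} actually gives $0\le\Lambda\le\tfrac12\d1$, so in the diagonal picture ``$\Lambda$ is a projection'' already forces $\Lambda=0$ and $\Psi=\Omega$, which would streamline both implications; and your forward implication (projection $\Rightarrow$ pure) needs only the CAR computation of $\Gamma_{\omega_\Psi}$ for a Slater determinant together with the uniqueness clause of Theorem \ref{d trace}, not the Example.
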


\begin{teo}\label{ubog diagonalization}
Let $\Gamma \in \cD$. Then there exists a Bogoliubov transformation $W$ satisfying the Shale-Stinespring condition such that
\begin{equation}\label{diagonal form}
W\Gamma W^*=\begin{pmatrix}  \Lambda   &   0  \\   0   &  \d1  - \Lambda \end{pmatrix},
\end{equation}
where $\Lambda$ is a diagonal operator with respect to the fixed orthonormal basis $\{ \varphi_k\}_{k\geq 1}$ satisfying
$\Tr(\Lambda) < \infty$ and $0 \leq \Lambda \leq \frac{1}{2}\d1$. 
\end{teo}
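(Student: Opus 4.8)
The plan is to bring $\Gamma$ into the stated form by diagonalizing it in a ``particle--hole--compatible'' basis, i.e. via the Bloch--Messiah--Zumino procedure, carrying the Shale--Stinespring (Hilbert--Schmidt) condition along at every step. First, since $\alpha^T=-\alpha$, a short computation gives $I\Gamma I=\d1-\Gamma$ with $I$ as in \eqref{conjugation I}; hence $\sigma(\Gamma)$ is symmetric about $\tfrac12$ and $I$ interchanges the spectral projections $E_\Gamma(\Delta)$ and $E_\Gamma(1-\Delta)$. Writing $\Gamma=\Gamma_0+P_-$ with $\Gamma_0=\bigl(\begin{smallmatrix}\gamma&\alpha\\-\bar\alpha&-\bar\gamma\end{smallmatrix}\bigr)\in i\ub$, all four blocks of $\Gamma_0$ lie in $\cB_2(\fH)$ and all four blocks of $\Gamma-\Gamma^2$ lie in $\cB_1(\fH)$ (since $\gamma\in\cB_1(\fH)$ and $\alpha\in\cB_2(\fH)$, the latter by Remark \ref{d trace rem}); so $\Gamma_0\in\cB_2(\fH\oplus\fH)$, whence $\sigma_{\mathrm{ess}}(\Gamma)=\sigma_{\mathrm{ess}}(P_-)=\{0,1\}$ by Weyl's theorem; moreover $\Gamma-\Gamma^2\in\cB_1(\fH\oplus\fH)$ is compact and commutes with $\Gamma$, so $\Gamma$ has pure point spectrum, its eigenvalues in $(0,1)$ having finite multiplicity and accumulating only towards $0$ and $1$.

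Using the spectral decomposition of $\Gamma$ I would split $\fH\oplus\fH$ into $I$-invariant pieces: the eigenspace $\mathcal H_{1/2}=\ker(\Gamma-\tfrac12\d1)$ (finite-dimensional as $\tfrac12\notin\sigma_{\mathrm{ess}}$) and, for each distinct eigenvalue $\lambda_i\in[0,\tfrac12)$, the piece $\mathcal K_i=\ker(\Gamma-\lambda_i\d1)\oplus\ker(\Gamma-(1-\lambda_i)\d1)$, on which $I$ exchanges the two orthogonal summands (their eigenvalues being distinct). On $\mathcal K_i$ the $\lambda_i$-eigenspace $L_i$ and its $I$-image $IL_i$ are orthogonal and span $\mathcal K_i$, so carrying an orthonormal basis of $L_i$ together with its $I$-image onto standard basis vectors defines a unitary $w_i=\bigl(\begin{smallmatrix}u_i&v_i\\\bar v_i&\bar u_i\end{smallmatrix}\bigr)$ on $\mathcal K_i$ intertwining $(I,\Gamma|_{\mathcal K_i})$ with the standard conjugation and $\operatorname{diag}(\lambda_i\d1,(1-\lambda_i)\d1)$ (labelling each pair so the smaller eigenvalue lands in the upper block). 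On $\mathcal H_{1/2}$, where $\Gamma=\tfrac12\d1$, it is enough to choose any orthogonal $I$-compatible splitting $\mathcal H_{1/2}=L\oplus IL$; this exists precisely because $\dim\mathcal H_{1/2}$ is even.

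Gluing the $w_i$ yields a unitary $\widehat W$ on $\fH\oplus\fH$ commuting with $I$, hence of Bogoliubov form $\widehat W=\bigl(\begin{smallmatrix}u&v\\\bar v&\bar u\end{smallmatrix}\bigr)$, with $\widehat W\Gamma\widehat W^{*}=\bigl(\begin{smallmatrix}\Lambda&0\\0&\d1-\Lambda\end{smallmatrix}\bigr)$ and $0\le\Lambda\le\tfrac12\d1$. Since $\Gamma-P_-=\Gamma_0\in\cB_2(\fH\oplus\fH)$, the spectral projections of $\Gamma$ onto the various pieces differ from the corresponding ones of $P_-$ by Hilbert--Schmidt operators (contour-integral estimate for the resolvent), and the $w_i$ merely rotate away this Hilbert--Schmidt off-diagonal content; hence $v=\bigoplus_i v_i$ lies in $\cB_2(\fH)$ and $\widehat W\in\UB$. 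A final block-diagonal Bogoliubov transformation $\bigl(\begin{smallmatrix}u_0&0\\0&\bar u_0\end{smallmatrix}\bigr)$ (trivially Shale--Stinespring) then relabels so that $\Lambda$ is diagonal with respect to the fixed basis $\{\varphi_k\}_{k\ge1}$, and $\Tr\Lambda<\infty$ because $\Lambda=(W\Gamma W^{*})_{11}=(W\Gamma_0W^{*})_{11}+vv^{*}$ with the first summand trace-class (conjugation by $\UB$ preserves the ``trace-class diagonal, Hilbert--Schmidt off-diagonal'' structure of $\Gamma_0$) and $vv^{*}\in\cB_1(\fH)$.

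The step I expect to be the main obstacle is the structural heart of the Bloch--Messiah--Zumino normal form: showing that $\dim\ker(\Gamma-\tfrac12\d1)$ is even (so that the $I$-compatible splitting of $\mathcal H_{1/2}$ exists) and, more broadly, that the unitaries $w_i$ can be arranged so that $\sum_i\|v_i\|_2^2<\infty$. This genuinely needs the interplay of the positivity $\Gamma\ge\Gamma^2$, the particle--hole symmetry, and the antisymmetry $\alpha^T=-\alpha$ -- the symmetry of $\sigma(\Gamma)$ about $\tfrac12$ by itself does not force the evenness in infinite dimensions. A convenient route is to first diagonalize $\gamma$ by a block-diagonal Bogoliubov transformation and then extract the required structure of $\alpha$ from the resulting positivity inequalities.
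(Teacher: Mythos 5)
The paper does not actually prove this theorem: it is quoted from Bach--Lieb--Solovej (\cite[Thm. 2.6]{BLS94} and the proof of \cite[Thm. 2.3]{BLS94}), so your proposal can only be measured against that source. Your architecture is the standard one and most of it is sound: the particle--hole symmetry $I\Gamma I=\d1-\Gamma$ (which indeed uses $\alpha^T=-\alpha$), the identification $\sigma_{\mathrm{ess}}(\Gamma)=\{0,1\}$ and the pure point structure via the compact positive operator $\Gamma-\Gamma^2$, the pairing of the $\lambda$- and $(1-\lambda)$-eigenspaces by the antiunitary $I$, and the trace-class bookkeeping for $\Lambda$ at the end. The Shale--Stinespring verification is only gestured at (``the $w_i$ merely rotate away this Hilbert--Schmidt off-diagonal content'' is not an argument), but it can be closed cleanly: with $Q:=E_\Gamma([0,\tfrac12))+P_L$ one has $Q-P_+\in \cB_2(\fH\oplus\fH)$ by a Riesz contour integral (using $\Gamma-P_-\in\cB_2$ and that $\tfrac12$ is isolated in $\sigma(\Gamma)\cup\{\tfrac12\}$), and then $v^*v=(\widehat W^*P_+\widehat W)_{22}=P_-QP_-=\bigl((Q-P_+)P_-\bigr)^*\bigl((Q-P_+)P_-\bigr)\in\cB_1(\fH)$.

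The genuine gap is exactly the one you name and then leave open: the evenness of $m:=\dim\ker(\Gamma-\tfrac12\d1)$. It really is the only remaining obstruction, because on $\mathcal H_{1/2}$ the Hilbert-orthogonality $L\perp IL$ is equivalent to $L$ being isotropic for the nondegenerate symmetric bilinear form $b(x,y)=\PI{Ix}{y}$, and a nondegenerate symmetric form on an even-dimensional complex space always admits a maximal isotropic subspace of half the dimension; so evenness implies the required splitting. But evenness itself is not free (as you correctly observe, the symmetry of $\sigma(\Gamma)$ about $\tfrac12$ does not force it, and a generic antiunitary involution on an odd-dimensional space exists), and the route you suggest (diagonalize $\gamma$ first, then exploit $\Gamma\geq\Gamma^2$) is not carried out. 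A self-contained way to close it: the segment $\Gamma_t=P_-+t(\Gamma-P_-)$ stays in the convex set $\cD$ and satisfies $I\Gamma_tI=\d1-\Gamma_t$ for every $t$; for any $\epsilon$ with $\tfrac12\pm\epsilon\notin\sigma(\Gamma_t)$ the $I$-symmetry gives $\dim\ran E_{\Gamma_t}((\tfrac12-\epsilon,\tfrac12+\epsilon))=2\dim\ran E_{\Gamma_t}((\tfrac12-\epsilon,\tfrac12))+\dim\ker(\Gamma_t-\tfrac12)$, so $\dim\ker(\Gamma_t-\tfrac12)\bmod 2$ coincides with the parity of a rank that is locally constant in $t$ by norm-continuity of Riesz projections across spectral gaps; since it vanishes at $t=0$ ($\Gamma_0=P_-$), it vanishes at $t=1$. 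Without this (or the equivalent input from the Bloch--Messiah--Zumino normal form), your proof is incomplete at the step you yourself identify as its structural heart.
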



\medskip

\noi \textit{Hartree-Fock-Bogoliubov theory.} The Hamiltonian $\dH$ is a self-adjoint operator on the Fock space, which we additionally assume to be bounded from below. This Hamiltonian generates the dynamics of a  many-particle fermion system, that is, particles satisfying the Pauli exclusion principle. It is of relevance to compute  the total ground state energy in the grand canonical ensemble, which is given by the Rayleigh-Ritz principle:
$$
E_{gs}:=\inf\{  \o(\dH) : \o \in \cZ  \}.
$$ 
Notice that the expression $\o(\dH) \in [0, \infty]$  is well defined because $\dH$ is bounded from below (see \cite[Section 2.c]{BLS94}). This is indeed an extension of the original formulation of the Rayleigh-Ritz principle that only uses pure states for the computation of above infimum. Determining the ground state energy $E_{gs}$ is an almost impossible task (except in a few well-known examples). 
One of the most widely used approximation methods is the \textit{(traditional) Hartree-Fock method} in quantum chemistry to calculate the energy of atoms and molecules. This method replaces the set of all states for the set of all pure states given by Slater determinants:
\begin{equation*}
E_{HF}:=\inf\{ \PI{\Psi}{\dH \Psi}_\cF: N \in \dN, \, \Psi=f_1 \wedge \ldots \wedge f_N, \, \PI{f_i}{f_j}=\delta_{ij} \}.
\end{equation*}
In this case, one assumes that the Hamiltonian $\dH$ conserves the particle number (i.e. $[\dH,\widehat{\dN}]=0$). Notice the upper bound $E_{gs}\leq E_{HF}$.

In this work we are interested in the Hartree-Fock-Bogoliubov method, which generalizes the Hartree-Fock method and it relies on the machinery described above between Bogoliubov transformations, quasi-free states and generalized one-particle density matrices.  In this method the Hamiltonian is not assumed to conserve the particle number. The resulting  Hartree-Fock-Bogoliubov energy is obtained by restricting the  infimum to the set of all quasi-free
states: 
\begin{align*}
E_{HFB} & :=\inf\{  \o(\dH) : \o \in \Z  \}\\
& = \inf\{  \cE(\Gamma) : \Gamma \in \DF  \}.
\end{align*}
In the second line,  we have reformulated the infimum with the aid of the Hartree-Fock-Bogoliubov functional defined by $\cE(\Gamma_\o):=\o(\dH)$, 
where $\Gamma_\o$ is the g1-pdm of the quasi-free state $\o$. Observe again that $E_{gs}\leq E_{HFB}$. We are not dealing with domain requirements. The quasi-free states $\o$ for which the kinetic energy is finite (i.e. $\o(\dH)<\infty$) were studied for particular classes of Hamiltonians  (see \cite{VFJ09, LL10}).



Let us mention  examples of Hamiltonians that are usually considered. Let $h$ be  a self-adjoint operator on $\fH$, the one-body operator, and let $W$ be a self-adjoint operator  acting on $\fH \otimes \fH$, the two-body operator. We write $h_j$ for the operator $\d1 \otimes \ldots \d1 \otimes h \otimes \d1 \ldots \d1$, where $h$ is placed in the $j$th factor of the $N$-th fold tensor product. Similarly, $W_{ij}$ acts only the $i$th and $j$th factors. Then one considers a Hamiltonian $\dH$ on the Fock space formally defined by
$$
\dH=\bigoplus_{N\geq 0} \left( \sum_{i=1}^N   h_i   +   \sum_{1 \leq i, j \leq N}  W_{ij}   \right).
$$
The fermions to consider can be electrons, and $h$ the Hamiltonian from molecular physics given by taking $h=-\Delta -\sum_{k=1}^K \frac{Z_k}{|x - r_k|}- \mu$ acting on $\fH=L^2(\mathbb{R}^3, \C^2)$, and $W=\frac{1}{|x-y|}$ interpreted as a multiplication operator on $\fH \otimes \fH\cong L^2(\mathbb{R}^6,\C^2)$. Here $r_k \in \mathbb{R}^3$, $r=1, \ldots, K$, represents the positions of the $K$ nuclei, $Z_i>0$ are their charges, and $\mu <0$ is a sufficiently small chemical potential. Another example is the pseudo-relativistic Hamiltonian for describing fermions obtained by taking  $h=\sqrt{ - \Delta + m^2} -m$, $m >0$ is the mass, and $W$ the same as before (see \cite{LL10}). As a third example we refer to the Hubbard model treated in \cite{BLS94}, where the Hilbert space $\fH$ is now finite dimensional. 
In all cases, $\dH$ fulfills the condition of being bounded from below.

\subsection{Banach manifolds basics}\label{apendice}



We recall definitions and elementary properties of several geometric structures in the setting  of manifolds modeled on Banach spaces.
Basic definitions of notions such as Banach manifold, smooth or $C^\infty$ maps between manifolds, Banach-Lie group, exponential map, etc.  can be found in \cite{La95}.  The special setting of analytic Banach manifolds is treated in \cite{Up85}. For several geometric structures in Banach manifolds and their interplay with operator theory  we refer to \cite{B06}.   We  write  manifold for refer to a real or complex smooth manifold modeled on a  Banach space. Also we omit the terminology Banach manifold, Banach-Lie group, etc., which are replaced by manifold, Lie group, etc. 	 The tangent space of a manifold $M$ at a point $p$ (resp. cotangent space) is denoted by $T_p M$ (resp. $T_p^* M$). When $f:M \to N$ is a smooth map between two manifolds, we write $T_p f:T_p M \to T_{f(p)} N$ for the tangent of $f$ at $p$.   Given Lie groups $G, K, \ldots$, we use the notation $\mathfrak{g}, \mathfrak{k}, \ldots$ for its Lie algebras.   The exponential map of Lie group $G$ is denoted by $\exp_G$. 
	
\medskip


\noi \textit{Homogeneous spaces and submanifolds.} Let $M$ be a  manifold. A Lie group $G$ \textit{acts smoothly on $M$} if the there is a smooth   map $G \times M \to M$, $(g,p) \mapsto g \cdot p$, such that $(g h) \cdot p=g \cdot (h \cdot p)$, and $1 \cdot p=p$ for every $g, h \in G$ and $p \in M$. Notice that in this case the map $\pi_p:G \to M$, $\pi_p(g)=g \cdot p$ is also smooth. The orbit of a point $p \in M$  is defined by $\cO(p):=\{ g\cdot p : g \in G\}$. The action is said to be \textit{transitive} if $\cO(p)=M$ for some $p \in M$. The \textit{isotropy group at} $p \in M$ is $G_p:=\{ g \in G : g \cdot p = p\}$. Clearly, there is a bijection $\cO(p) \to G/G_p$, $g \cdot p \mapsto g G_p$. A smooth map $f:M \to N$ between two  manifolds is called a \textit{submersion at} $p \in M$ if $T_p f: T_p M \to T_{f(p)}N$ is surjective and $\ker(T_p f)$ is a complemented subspace in $T_p M$. In the case that $f$ is a submersion at every point of $M$, it is said that $f$ is a \textit{submersion}.  

Let $G$ be a Lie group acting smoothly and transitively on a manifold $M$. If there is some point $p \in M$ such that the map $\pi_p:G \to M$, $\pi_p(g)=g \cdot p$ is a submersion at $1 \in G$, then $M$ is called a \textit{smooth homogeneous space of} $G$.
We observe that the existence of such a point $p \in M$ in the above definition is actually equivalent to the fact that every point in $M$ have that property.

 Let $M$ be manifold and $N$ a subset of $M$.   $N$ is an \textit{embedded submanifold} of $M$ if for each point $p \in N$ there exists a Banach space $E$ and a chart $(\cW, \phi)$ at $p$,
 $\phi:\cW \subseteq E \to M$, $\phi(0)=p$, such that $\phi(\cW \cap (F \oplus \{ 0 \}) )=\phi(\cW) \cap N$, where $F$ is a closed and complemented subspace in $E$.  A smooth map $f:M \to N$ between two  manifolds is called an \textit{immersion at} $p \in M$ if $T_p f: T_p M \to T_{f(p)} N$ is injective and $\ran(T_p f)$ is a complemented closed subspace in $T_p M$. In the case that $f$ is an immersion at every point of $M$, it is said that $f$ is an \textit{immersion}. The following criterion is useful.

\begin{prop}\label{crit immersion}
Let $M$, $N$ be two manifolds, $N \subseteq M$. Then $N$ is an embedded submanifold of $M$ if and only if the topology of $N$ coincides with the topology inherited from $M$ and the inclusion map $N \hookrightarrow M$ is an immersion.
\end{prop}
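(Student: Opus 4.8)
The statement is the standard criterion identifying embedded submanifolds among the images of immersions, adapted to the Banach setting, so the proof should mostly be a careful unwinding of definitions. The plan is to prove the two implications separately, being attentive to the two extra hypotheses that distinguish the Banach case from the finite-dimensional one, namely that the splitting subspaces are \emph{closed and complemented}.

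For the forward implication, assume $N$ is an embedded submanifold of $M$. First I would show the topologies agree: given $p\in N$ and a chart $(\cW,\phi)$ at $p$ with $\phi(\cW\cap(F\oplus\{0\}))=\phi(\cW)\cap N$, the set $\phi(\cW)\cap N$ is open in the relative topology of $N$ by definition, and $\phi$ restricts to a homeomorphism from $\cW\cap(F\oplus\{0\})$ (an open set in the Banach space $F$) onto it; since these sets form a neighborhood basis of $p$ in $N$, the manifold topology of $N$ coincides with the subspace topology. Then I would show the inclusion $\iota:N\hookrightarrow M$ is an immersion at each $p$: in the chart $(\cW,\phi)$, the composition $\phi^{-1}\circ\iota$ restricted to the submanifold chart is just the inclusion $F\oplus\{0\}\hookrightarrow E$, whose differential at $0$ is the inclusion $F\hookrightarrow E$; this is injective with closed complemented range by hypothesis on $F$, and since $\phi$ is a diffeomorphism onto its image, $T_p\iota$ has the same properties. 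Hence $\iota$ is an immersion.

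For the converse, assume the topology of $N$ is the relative one and $\iota$ is an immersion. Fix $p\in N$. Pick a chart $(\cV,\psi)$ of $N$ at $p$ modeled on a Banach space $F$ with $\psi(0)=p$, and a chart $(\cU,\chi)$ of $M$ at $p$ modeled on $E$ with $\chi(0)=p$. The local representative $g:=\chi^{-1}\circ\iota\circ\psi$ is a smooth map between open neighborhoods of $0$ with $g(0)=0$ and $dg_0:F\to E$ injective with closed complemented range $R$; write $E=R\oplus S$ with $S$ closed. The core step is the local immersion theorem (inverse function theorem) for Banach manifolds: after shrinking neighborhoods there is a local diffeomorphism $h$ of $E$ near $0$ such that $h\circ g$ is the inclusion $F\hookrightarrow R\oplus S=E$, $x\mapsto(dg_0\,x,0)$ up to identifying $F$ with $R$ via $dg_0$. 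Then $\tilde\chi:=h\circ\chi^{-1}$ is a new chart of $M$ at $p$ with $\tilde\chi(\iota(\psi(x)))=(x,0)$ on a small neighborhood, so $\tilde\chi^{-1}$ maps $\cW\cap(R\oplus\{0\})$ into $N$; what remains is to check this exhausts $\tilde\chi^{-1}(\cW)\cap N$, which is where the hypothesis that $N$ carries the relative topology is used: $\psi(\cV)$ is open in $N$, hence equals $N\cap O$ for an open $O\subseteq M$, and intersecting with $\tilde\chi^{-1}(\cW)$ forces $\tilde\chi^{-1}(\cW)\cap N$ to coincide with the image of the slice. This produces the required submanifold chart.

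The main obstacle is the converse direction, specifically invoking the Banach local immersion theorem correctly: in infinite dimensions one genuinely needs the complemented-range hypothesis for the straightening diffeomorphism $h$ to exist, and one needs the relative-topology hypothesis to rule out the pathology where the immersed image is locally a proper subset of the straightened slice (the figure-eight type phenomenon). I would therefore cite the inverse function theorem for Banach manifolds from \cite{La95} for the straightening, and be explicit that the topological hypothesis is exactly what upgrades ``locally the image sits inside a slice'' to ``locally the image equals a slice.'' The remaining verifications — that composites of charts are charts, that shrinking preserves everything — are routine and I would not spell them out.
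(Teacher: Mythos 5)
The paper states Proposition~\ref{crit immersion} in the appendix without proof (it is a standard criterion, implicitly referred to \cite{B67,La95,Up85}), so there is no in-paper argument to compare against; your sketch is the standard and correct one, with the two Banach-specific points (the complemented-range hypothesis needed for the straightening diffeomorphism, and the relative-topology hypothesis needed to upgrade ``contained in a slice'' to ``equal to a slice'') correctly identified as the places where the proof genuinely uses its hypotheses. The only detail worth making explicit when writing it up is the final shrinking: one must choose the product neighborhood $\cW=\cW_1\times\cW_2$ with $\tilde\chi^{-1}(\cW)\subseteq O$ \emph{and} $\cW_1$ inside the straightened image of the chart domain of $N$, so that both inclusions between $\tilde\chi^{-1}(\cW)\cap N$ and the slice hold.
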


The notion of Lie subgroup is related to the construction of homogeneous spaces. This notion is not uniform in the literature, see for instance \cite{N04} where various notions are recalled; and also some pathologies are pointed out, which only show up in this context of infinite dimensional Lie groups modeled on Banach spaces. In this work we use the most restrictive notion available following \cite{B67}: a subgroup $K$ of a Lie group $G$ is a \textit{Lie subgroup} if $K$ is an embedded submanifold of $G$. In particular, this allows us to state the next result as follows.

\begin{teo}\label{quotient Lie manifolds struct}
Let $G$ be a Lie group with Lie algebra $\fG$,  $K$ a closed subgroup of $G$, and set
$$ 
\nL(K) :=\{ X \in \fG :  \exp_G(tX) \in K, \, \forall t \in \R  \}.
$$
The following assertions hold:  
\begin{itemize}
\item[i)] $\nL(K)$ is a closed Lie subalgebra of $\fG$.
\item[ii)] If there are open neighborhoods $0 \in \cV \subseteq \fG$ and $\d1 \in \cW \subseteq G$ such that 
$\exp_G:\cV \to \cW$ is a diffeomorphism and $\exp_G(\cV \cap \nL(K))=\cW \cap K$, then $K$ carries a Lie group structure endowed with the relative topology inherited from $G$ such that $\nL(K)$ is the Lie algebra of $K$ and $\exp_K=\exp_G|_{\nL(K)}:\nL(K) \to K$.
\item[iii)] If, in addition, $\nL(K)$ is a complemented subspace in $\fG$, then $K$ is a Lie subgroup of $G$ and  the quotient space $M:=G/K$ carries the structure of a  manifold equipped with the quotient topology such that the 
natural projection $\pi: G \to M$, $\pi(g)=gK$, is a submersion. Furthermore, $G$ acts smoothly on $M$ by the left translation action defined by $ G \times M \to M$, $g \cdot hK=ghK$.
\end{itemize}
\end{teo}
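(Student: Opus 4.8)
The plan is to establish the three assertions in the order stated, using only the standard toolkit for Banach--Lie groups: the Trotter and commutator product formulas for $\exp_G$, the local chart supplied by $\exp_G$, and the following elementary observation, which I will call the \emph{closed-subspace fact}: a smooth $\fG$-valued map whose image lies in a \emph{closed} linear subspace $F\subseteq\fG$ is automatically smooth as an $F$-valued map, because $F$ is complete, so the inclusion $F\hookrightarrow\fG$ is a topological isomorphism onto its closed image by the open mapping theorem, and composing with its (bounded, hence smooth) inverse preserves smoothness.

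\emph{Part i).} Closedness of $\nL(K)$ is immediate: if $X_n\to X$ in $\fG$ with $X_n\in\nL(K)$, then for each fixed $t\in\R$ one has $\exp_G(tX_n)\to\exp_G(tX)$ by continuity of $\exp_G$, and since $\exp_G(tX_n)\in K$ with $K$ closed, $\exp_G(tX)\in K$; hence $X\in\nL(K)$. Stability under real scalars is a reparametrisation of $t$, and stability under addition follows from the Trotter product formula $\exp_G(t(X+Y))=\lim_n\bigl(\exp_G(tX/n)\exp_G(tY/n)\bigr)^n$ together with closedness of $K$. For the bracket, observe that for $X,Y\in\nL(K)$ and $s\in\R$ one has $\exp_G\bigl(t\,\Ad(\exp_G(sX))Y\bigr)=\exp_G(sX)\exp_G(tY)\exp_G(-sX)\in K$ for all $t$, so $\Ad(\exp_G(sX))Y\in\nL(K)$; differentiating at $s=0$ and using that $\nL(K)$ is a closed subspace gives $[X,Y]=\tfrac{d}{ds}\big|_{s=0}\Ad(\exp_G(sX))Y\in\nL(K)$. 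Thus $\nL(K)$ is a closed Lie subalgebra of $\fG$.

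\emph{Part ii).} Transport the local chart $\exp_G\colon\cV\to\cW$ by left translations: for $k\in K$ put $\cW_k:=k(\cW\cap K)$ and $\psi_k:=\bigl(\exp_G|_{\cV\cap\nL(K)}\bigr)^{-1}\circ L_{k^{-1}}\colon\cW_k\to\cV\cap\nL(K)$. Each $\cW_k$ is open in the relative topology of $K$ and $\psi_k$ is a homeomorphism onto an open subset of the Banach space $\nL(K)$. The transition maps $\psi_{k'}\circ\psi_k^{-1}$ are restrictions of the $\fG$-valued smooth maps $X\mapsto\exp_G^{-1}\bigl(k'^{-1}k\,\exp_G(X)\bigr)$, which take values in $\nL(K)$ by the chart hypothesis; by the closed-subspace fact they are smooth, so the $\psi_k$ form a $C^\infty$-atlas on $K$ inducing exactly the relative topology. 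Multiplication $K\times K\to K$ and inversion $K\to K$ are restrictions of the corresponding smooth maps of $G$ that map into $K$, hence smooth by the same fact, so $K$ is a Lie group. By construction $T_{\d1}K=\nL(K)$; for $X\in\nL(K)$ the curve $t\mapsto\exp_G(tX)$ is a smooth one-parameter subgroup of $K$ with velocity $X$ at $t=0$, hence equals $\exp_K(tX)$; and since $K\hookrightarrow G$ is a morphism of Lie groups its differential is a morphism of Lie algebras, so the bracket of $\nL(K)$ as $\mathrm{Lie}(K)$ is the one found in i).

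\emph{Part iii).} Assume $\fG=\nL(K)\oplus\mathfrak{m}$ with $\mathfrak{m}$ closed. The chart $\exp_G\colon\cV\to\cW$ then already realises the submanifold condition $\exp_G(\cV\cap\nL(K))=\cW\cap K$ with $\nL(K)$ closed and complemented, so translating by $L_k$ exhibits $K$ as an embedded submanifold of $G$, i.e.\ a Lie subgroup. For $M=G/K$, consider $\Phi\colon\mathfrak{m}\times\nL(K)\to G$, $\Phi(Y,X)=\exp_G(Y)\exp_G(X)$; its differential at $(0,0)$ is $(Y,X)\mapsto Y+X$, an isomorphism $\mathfrak{m}\oplus\nL(K)\to\fG$, so $\Phi$ restricts to a diffeomorphism of a neighbourhood $\cV_{\mathfrak{m}}\times\cV_{\nL}$ of $(0,0)$ onto a neighbourhood of $\d1$ in $G$. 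After shrinking the neighbourhoods one arranges, using injectivity of $\Phi$ there and continuity of $(g,g')\mapsto g^{-1}g'$, that $gK=g'K$ with $g,g'\in\exp_G(\cV_{\mathfrak{m}})$ forces $g=g'$; hence assigning to each $gK$ (for $g$ near $\d1$) its unique representative in $\exp_G(\cV_{\mathfrak{m}})$ defines a continuous local section $\sigma$ of $\pi$. This gives a chart $\cV_{\mathfrak{m}}\to M$, $Y\mapsto\exp_G(Y)K$, around $eK$; translating by left multiplication by $g\in G$ gives charts around every $gK$, with transition maps smooth since they are read off from the smooth structure of $G$ and from $\sigma$. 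The quotient topology coincides with the manifold topology because $\pi$ is open ($\pi^{-1}(\pi(U))=UK$ is open). In these charts $\pi$ is $(Y,X)\mapsto Y$, a submersion with kernel $\nL(K)$, which is complemented, so $\pi$ is a submersion; and the left-translation action $G\times M\to M$, $(g,hK)\mapsto ghK$, is smooth, checked in the local charts via smoothness of multiplication in $G$ and of $\sigma$. I expect iii) to be the main obstacle: the bookkeeping in constructing $\sigma$ from the splitting, the repeated shrinking of neighbourhoods needed for well-definedness, and the verification that the resulting atlas is compatible with the quotient topology and makes $\pi$ a submersion and the action smooth. Parts i) and ii) are routine once the product formulas and the closed-subspace fact are in hand.
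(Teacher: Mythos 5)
Your argument is correct, and it is the standard textbook proof of this classical result. Note that the paper does not actually prove Theorem \ref{quotient Lie manifolds struct}: it is stated in the appendix as background and is implicitly quoted from the standard references (\cite{B67}, \cite{Up85}, \cite{B06}, \cite{N04}), so there is no in-paper proof to compare against; your route --- Trotter and $\Ad$-conjugation for i), transported exponential charts plus the closed-subspace smoothness fact for ii), and the product chart $\Phi(Y,X)=\exp_G(Y)\exp_G(X)$ with a local cross section for iii) --- is precisely the one used in those sources. One cosmetic remark: the open mapping theorem is not needed for your ``closed-subspace fact'' (the inclusion of a closed subspace with the induced norm is already an isometry onto its image); the relevant point is simply that difference quotients and their limits stay in a closed subspace.
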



We observe that the  manifold structure on $G/K$ is uniquely determined by the condition that the projection $\pi$ is a submersion.  Also the previous result has the following converse. A smooth homogeneous space $M$ of a Lie group $G$ must be diffeomorphic to $G/G_p$, where 
$G_p$ is  the isotropy group of $G$ at any point $p \in M$, and $G_p$ turns out to be a Lie subgroup of $G$. Let $\fG_p$ be the Lie algebra of $G_p$. A \textit{reductive structure} for a  smooth homogeneous space $M$ of a Lie group $G$ is a smooth distribution of closed subspaces $\{\mathfrak{m}_p\}_{p \in M}$ of $\fG$ such that $\fG_p \oplus \mathfrak{m}_p=\fG$ and $\Ad_g ( \mathfrak{m}_p )=\mathfrak{m}_p$, for every $g \in G_p$ and $p \in M$. A smooth homogeneous space equipped with a reductive structure is called a \textit{reductive homogeneous space}.


\medskip

\noi \textit{Symplectic homogeneous spaces.}  A \textit{weakly symplectic manifold} is a pair $(M,\omega)$ of a manifold $M$ and a non-degenerate closed  differential 2-form $\omega$ taking values on $\R$. 
It is said to be a \textit{strongly symplectic manifold} if for every $p \in M$ the injective map $T_p M \to T_p^* M$, $v \to \omega_p(v, \cdot)$, is also surjective.  Notice that the notions of weakly and strongly symplectic manifolds clearly coincide for finite dimensional manifolds. In the case in which $M=G/K$ is a smooth homogeneous space of $G$, consider the smooth map $\alpha_g: M \to M$ defined by $\alpha_g(hK)=ghK$. A 2-form $\omega$ is called \textit{invariant}  if $(\alpha_g)^*\omega=\omega$ for all $g \in G$. Here $(\alpha_g)^*\omega$ indicates  the pull-back of $\omega$ by $\alpha_g$. A \textit{weakly (resp. strongly) homogeneous space}
  is a weakly (resp. strongly) symplectic manifold  $(M, \omega)$, where  $M=G/K$ is a homogeneous space of $G$ and the form $\omega$ is invariant.

Although the forthcoming definitions can be given for a Banach space $\fZ$, we only assume that $\fZ=\R$ or $\fZ=\C$, which are the only cases needed in this work. 
 A \textit{continuous $\fZ$-valued 2-cocyle} of a Lie algebra $\fg$ is a  continuous skew-symmetric function $s:\fg \times \fg \to \fZ$
such that 
\begin{equation*}
s(X,[Y,Z]) + s (Z,[X,Y]) +  s(Y,[Z,X])=0,  
\end{equation*}
for all $X,Y,Z \in \fg$. It is called a \textit{coboundary} if there is a continuous linear map $f: \fg \to \fZ$ such that $s(X,Y)=f([X,Y])$. Let $Z^2_c(\fg, \fZ)$ and $B^2_c(\fg,\fZ)$ be the space of continuous $\fZ$-valued 2-cocycles and the subspace of coboundaries, respectively. The \textit{second continuous cohomology Lie algebra space} is defined as the following quotient of vector spaces $H^2_c(\fg, \fZ):=Z^2_c(\fg,\fZ)/B^2_c(\fg, \fZ)$. Two continuous $\fZ$-valued 2-cocycles are said to be \textit{cohomologous} if their classes coincide in $H^2_c(\fg, \fZ)$.

Let $G$ be a Lie group, $K$ a Lie subgroup of $G$ and $M=G/K$. Let $\fG$ and $\fK$ be the Lie algebras of $G$ and $K$, and $\pi: G \to G/K$ the natural projection, $p=\pi(1)$. Suppose that $s \in Z^2_c(\fg, \R)$ is such that $\fK=\{ X \in \fG :  s(X,Y)=0, \, \forall Y \in \fG   \}$. Since $T_p M \cong \fG / \fK$, and using the relation between $\fK$ and $s$,  the following non-degenerate skew-symmetric bilinear map is well defined: 
$$
\omega_p(X + \fK , Y + \fK):=s(X,Y), \, \, \, \, \, X, Y \in \fG.
$$
Then it can be smoothly translated to other points of $M$ by using the above defined maps $\alpha_g$, namely
$$
\omega_{g \cdot p}(v,w):=\omega_p(T_{g \cdot p} (\alpha_{g^{-1}})(v), T_{g \cdot p} (\alpha_{g^{-1}})(w) ), \, \, \, \, \, v,w \in T_{g \cdot p} M.
$$
We write $\omega=\Sigma(s)$ for the differential 2-form defined by $\omega:=\{ \omega_{g \cdot p}\}_{g \in G}$. 
For finite-dimensional Lie groups this construction provides an invariant symplectic form on their coadjoint orbits. 
We now state its generalization to the setting of Banach manifolds. 

\begin{teo}\label{symplectic form construction}
Let $G$ be a Lie group, $K$ a Lie subgroup of $G$ and $M=G/K$.   Assume that $s \in Z^2_c(\fg, \R)$ satisfies $\fK=\{ X \in \fG :  s(X,Y)=0, \, \forall Y \in \fG   \}$. Then $(M, \Sigma(s))$ is a weakly symplectic homogeneous space. 
\end{teo}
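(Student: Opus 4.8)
The plan is to realize the $2$-form $\Sigma(s)$ as the descent to $M=G/K$ of a left-invariant $2$-form on $G$, so that every property to be verified becomes a statement about the fixed continuous bilinear form $s$ on $\fG$. First I would introduce the left-invariant $2$-form $\tilde{s}$ on $G$ determined by $\tilde{s}_{\d1}=s$; it is a smooth form because $s$ is continuous, hence bounded. By Theorem \ref{quotient Lie manifolds struct} the projection $\pi:G\to M$, $\pi(g)=gK$, is a submersion whose fibres are the cosets $gK$, so in the left trivialization of $TG$ the vertical distribution is the constant subspace $\fK\subseteq\fG$. The hypothesis $\fK=\{X\in\fG:s(X,Y)=0\ \forall Y\in\fG\}$ says exactly that $s$ annihilates $\fK$ in the first slot — and, by skew-symmetry, also in the second — so $\tilde{s}$ is horizontal for $\pi$.

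Next I would show that $\tilde{s}$ is closed and $K$-invariant, which together mean that it is the pullback $\pi^{*}\omega$ of a (necessarily unique) $2$-form $\omega$ on $M$. For closedness, the exterior derivative of a left-invariant form is left-invariant, and by the Chevalley--Eilenberg formula $d\tilde{s}$ has value $(X,Y,Z)\mapsto s(X,[Y,Z])+s(Z,[X,Y])+s(Y,[Z,X])$ (up to an overall sign), which vanishes precisely because $s$ is a $2$-cocycle. For invariance under right translations by the identity component $K^{0}$, one applies Cartan's magic formula $\cL_{V}\tilde{s}=d\,\iota_{V}\tilde{s}+\iota_{V}\,d\tilde{s}$ to the fundamental vector fields $V$ of the right $K$-action, which are generated by $\fK$: horizontality gives $\iota_{V}\tilde{s}=0$ and closedness gives $\iota_{V}d\tilde{s}=0$, hence $\cL_{V}\tilde{s}=0$, and integrating along $K^{0}$ yields $(R_{h})^{*}\tilde{s}=\tilde{s}$ for $h\in K^{0}$. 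At the infinitesimal level this is nothing but the cocycle relation with one entry taken in the radical, i.e.\ $s([X,Z],Y)+s(Z,[X,Y])=0$ for $X\in\fK$.

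Granting the descent, I would then identify $\omega$ with $\Sigma(s)$: at $p=\pi(\d1)$ the isomorphism $T_{p}M\cong\fG/\fK$ induced by $T_{\d1}\pi$ carries $\omega_{p}$ to the pairing $(\bar X,\bar Y)\mapsto s(X,Y)$, and pushing forward by the maps $\alpha_{g}$ reproduces exactly the defining formula of $\Sigma(s)$; the $G$-invariance $(\alpha_{g})^{*}\omega=\omega$ is immediate from the left-invariance of $\tilde{s}$. Smoothness of $\omega$ follows from that of $\tilde{s}$ together with the existence of local smooth sections of the submersion $\pi$. Closedness transfers back since $\pi^{*}(d\omega)=d(\pi^{*}\omega)=d\tilde{s}=0$ and $\pi$ is a submersion. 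Finally, weak non-degeneracy is checked at $p$ and propagated by invariance: if $\omega_{p}(\bar X,\cdot)=0$ then $s(X,Y)=0$ for all $Y\in\fG$, so $X\in\fK$ by hypothesis and $\bar X=0$. This gives that $(M,\Sigma(s))$ is a weakly symplectic homogeneous space.

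The step I expect to be the real obstacle is the full $K$-invariance of $\tilde{s}$ when $K$ is disconnected: Cartan's formula only produces invariance under $K^{0}$, and for the remaining components one must separately verify that $\Ad_{h^{-1}}$ — which does preserve $\fK$, since $\fK=\nL(K)$, and hence descends to $\fG/\fK$ — leaves the induced bilinear form on $\fG/\fK$ invariant. Everything else is routine bookkeeping with left-invariant forms and the cocycle identity. (One could instead simply invoke the standard construction in \cite{N04} or \cite{OR03}; note also that in every situation where this theorem is applied in the present paper the isotropy group $\UB^{\Gamma}$ is connected, so the obstacle does not occur.)
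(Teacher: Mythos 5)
The paper does not actually prove this statement: Theorem \ref{symplectic form construction} is quoted in the appendix as standard background, with the construction of $\Sigma(s)$ sketched just before it and the general references \cite{B06, N04, OR03} standing in for a proof. So there is no argument in the paper to compare yours against line by line. What you have written is the standard descent argument -- left-invariant $2$-form $\tilde{s}$ on $G$, horizontality from the hypothesis that $\fK$ is the radical of $s$, closedness from the cocycle identity via the Chevalley--Eilenberg formula, invariance under $\exp_G(\fK)$ via Cartan's formula, identification of the quotient form with $\Sigma(s)$, and weak non-degeneracy from the hypothesis on $\fK$ -- and each of these steps is correct, including in the Banach setting.

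The obstacle you flag is genuine, and it is worth locating precisely. Invariance of $s$ (modulo its radical) under $\Ad_h$ for $h$ in components of $K$ not reached by $\exp_G(\fK)$ does not follow from the stated hypotheses, and it is needed not merely for the $G$-invariance of the form but already for $\Sigma(s)$ to be \emph{well defined}: the formula $\omega_{g\cdot p}=(\alpha_{g^{-1}})^{*}\omega_{p}$ is consistent only if $s(\Ad_{h}X,\Ad_{h}Y)=s(X,Y)$ modulo $\fK$ for every $h\in K$. This is best regarded as an implicit hypothesis of the theorem (automatic for connected $K$), not something you can derive from the given data. One correction to your closing remark, though: the isotropy groups $\UB^{\Gamma}$ are \emph{not} always connected. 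When $\frac{1}{2}\in\sigma(\Gamma)$ the group \eqref{isot group sec case} contains the transformation $S$ with $\ind_{\UB}(S)=1$ constructed in the proof of Theorem \ref{g1pdm smooth homog spaces}, so it meets both components of $\UB$ and is therefore disconnected. The applications in the paper are nevertheless unaffected, because the identity $s_{\Gamma}(\Ad_{V}X,\Ad_{V}Y)=s_{\Ad_{V^{*}}(\Gamma)}(X,Y)=s_{\Gamma}(X,Y)$ for $V\in\UB^{\Gamma}$, recorded at the start of the proof of Lemma \ref{lema 2 symplectic}, gives the full $\Ad_{\UB^{\Gamma}}$-invariance directly, with no appeal to connectedness.
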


\medskip

\noi \textit{K\"ahler homogenous spaces.}  Let $G$ be a real Lie group and $K$ a Lie subgroup of $G$. Then, $M=G/K$ has the structure of real smooth homogenous space. Assume that $(M,\omega)$ is a weakly (resp. strongly) symplectic homogeneous space of $G$ satisfying the following conditions:
$M$ is actually a complex manifold and the  maps $\alpha_g:M \to M$, $\alpha_g(hK)=ghK$  are holomorphic for all $g \in G$; $\omega_p(v,w)=\omega_p(J_p v , J_p w)$ for all $v,w \in T_p M$ and $p \in M$, where $J_p$ is the complex structure on $T_p M$ giving the multiplication by i; and $\omega_p(v,J_p v)>0$ for all $v \in T_p M$, $v \neq 0$, and $p \in M$. In this case, $(M,\omega)$ is said to be \textit{weakly (resp. strongly) K\"ahler homogenous space of $G$}.

Next we describe how  K\"ahler homogenous space can be constructed in Lie algebraic terms. Given a real Lie group $G$ with Lie algebra $\fG$ we write $\fG_\C$ for the complexification of $\fG$, and we denote by $a \mapsto \overline{a}$
the involution of $\fG_\C$ whose set of fixed points is given by $\fG$.

\begin{rem}\label{complex}
The following criterion was proved in \cite{B05}. For real smooth homogeneous space $M=G/K$ there is a bijective correspondence between, on one hand, complex manifold structures on $M$ such that the smooth structure underlying the complex structure is just the smooth manifold structure of $M$ and  the maps $\alpha_g$, $g \in G$, are holomorphic, and on the other hand, closed complex subalgebras $\fP$ of $\fG_\C$ such that 
\begin{equation}\label{complex and p}
\fP + \overline{\fP} = \fG_\C, \, \, \, \, \fP \cap \overline{\fP}=\fK_\C, \, \, \, \, \text{and} \, \, \, \, \mathrm{Ad}_h \fP \subseteq \fP, \, \, h \in K.
\end{equation}
\end{rem}

\begin{teo}\label{kahler en terminos de alg Lie}
Let $G$ be a real Lie group, $K$ a Lie subgroup of $G$ and $M=G/K$ a smooth homogeneous space.   Let $s \in Z^2_c(\fg, \R)$ satisfying $\fK=\{ X \in \fG :  s(X,Y)=0, \, \forall Y \in \fG   \}$. Denote by $\omega=\Sigma(s)$, and consider the $\C$-linear extension $s:\fG_\C \times \fG_\C \to \C$. 
Moreover assume that $\fP$  is a complemented subalgebra of $\fG_\C$ with the properties stated in \eqref{complex and p},
and assume that $M$ has the structure of a complex manifold given by $\fP$. 
Then $(M,\omega)$ is a weakly K\"ahler homogenous space if and only if the following hold:
\begin{equation}\label{com pol}
s(\fP \times \fP)=0, \, \, \, \, \text{and} \, \, \, \, -is(a,\overline{a})>0, \, \, a \in \fP \setminus \fK_\C .
\end{equation}
Furthermore, if the injective map $\fG / \fK \to (\fG / \fK)^* \simeq T_p ^*M$, $X+ \fK \mapsto \omega_p(X + \fK , \, \cdot \,)$, is surjective, then $(M,\omega)$ is a strongly K\"ahler homogenous space.
\end{teo}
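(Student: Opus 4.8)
The plan is to combine the two constructions already available. By Theorem~\ref{symplectic form construction}, the pair $(M,\omega)$ with $\omega=\Sigma(s)$ is a weakly symplectic homogeneous space of $G$, the form $\omega$ being $G$-invariant; and by Remark~\ref{complex} the complemented subalgebra $\fP$ enjoying the properties \eqref{complex and p} equips $M$ with a complex manifold structure, compatible with its smooth structure, for which every translation $\alpha_g$ is holomorphic. A K\"ahler homogeneous space is, by definition, exactly such a datum together with the two compatibility conditions $\omega_p(v,w)=\omega_p(J_pv,J_pw)$ and $\omega_p(v,J_pv)>0$ for $0\neq v\in T_pM$, where $J$ is the complex structure. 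Since $\omega$, $J$ and the transitive action of $G$ by the holomorphic symplectomorphisms $\alpha_g$ are all $G$-invariant, it suffices to verify these two conditions at the single base point $p=\pi(\d1)$; the same invariance reduces the final ``strong'' assertion to checking surjectivity of $T_pM\to T_p^*M$ at $p$ alone.

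First I would make the complex structure at $p$ explicit in terms of $\fP$. Identify $T_pM\cong\fG/\fK$ and pass to the complexification $(T_pM)_\C\cong\fG_\C/\fK_\C$; because $\fK_\C=\fP\cap\overline{\fP}$ lies in both $\fP$ and $\overline{\fP}$, the relations $\fP+\overline{\fP}=\fG_\C$ and $\fP\cap\overline{\fP}=\fK_\C$ give the direct sum $\fG_\C/\fK_\C=(\fP/\fK_\C)\oplus(\overline{\fP}/\fK_\C)$. With the normalization of the correspondence in Remark~\ref{complex} under which $\fP/\fK_\C$ is the $+i$-eigenspace of the $\C$-linear extension of $J_p$ (the opposite convention merely flips the sign in the positivity condition of \eqref{com pol}), the complexified $J_p$ acts as $+i$ on $\fP/\fK_\C$ and as $-i$ on $\overline{\fP}/\fK_\C$. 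I would also record two elementary facts: the $\C$-bilinear extension of $\omega_p$ to $\fG_\C/\fK_\C$ is the descent of the $\C$-linear extension of $s$ (well defined because $\fK_\C$ is precisely the radical of that extension, as one checks by splitting into real and imaginary parts), and, $s$ being $\R$-valued on $\fG$, one has $s(\overline a,\overline b)=\overline{s(a,b)}$ for all $a,b\in\fG_\C$.

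With these preparations the computation is short. Decomposing $z=z^{+}+z^{-}$ into $\pm i$-eigencomponents of $J_p$ and using skew-symmetry of $\omega_p$, one obtains $\omega_p(J_pz_1,J_pz_2)-\omega_p(z_1,z_2)=-2\omega_p(z_1^{+},z_2^{+})-2\omega_p(z_1^{-},z_2^{-})$; hence $\omega_p(J_p\,\cdot\,,J_p\,\cdot\,)=\omega_p$ holds if and only if $s(\fP\times\fP)=0$ and $s(\overline{\fP}\times\overline{\fP})=0$, and by the relation $s(\overline a,\overline b)=\overline{s(a,b)}$ these two vanishings are equivalent, so both reduce to $s(\fP\times\fP)=0$. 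For the positivity, a real vector $v\in\fG/\fK$ is $v=a+\overline a$ with $a\in\fP/\fK_\C$, and $v\neq0$ exactly when $a\notin\fK_\C$, i.e. $a\in\fP\setminus\fK_\C$; since $J_pv=ia-i\overline a$ and $\omega_p(a,a)=\omega_p(\overline a,\overline a)=0$, one finds $\omega_p(v,J_pv)=-2i\,\omega_p(a,\overline a)=2\bigl(-i\,s(a,\overline a)\bigr)$, so $\omega_p(v,J_pv)>0$ for every $v\neq0$ precisely when $-i\,s(a,\overline a)>0$ for all $a\in\fP\setminus\fK_\C$. Combining the two conditions yields the claimed equivalence with \eqref{com pol}. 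Finally, if $X+\fK\mapsto\omega_p(X+\fK,\,\cdot\,)$ is onto, it is a topological isomorphism $\fG/\fK\to(\fG/\fK)^*\simeq T_p^*M$, hence so is the corresponding map at every point by $G$-invariance, so $(M,\omega)$ is strongly symplectic and therefore a strongly K\"ahler homogeneous space.

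I expect the main obstacle to be bookkeeping rather than a genuine idea: pinning down which eigenspace of the complexified $J_p$ corresponds to $\fP/\fK_\C$ in the convention of Remark~\ref{complex} (this fixes the sign in $-i\,s(a,\overline a)>0$), and keeping careful track of the passage between statements on $\fG_\C$, such as $s(\fP\times\fP)=0$, and the corresponding statements on the quotient $\fG_\C/\fK_\C$, using throughout that $\fK_\C$ is the radical of the $\C$-bilinear extension of $s$ and that the condition $\mathrm{Ad}_K\fP\subseteq\fP$ in \eqref{complex and p} makes all of this well defined and $K$-invariant.
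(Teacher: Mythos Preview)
Your argument is correct and is essentially the standard computation one finds in the reference. Note, however, that the paper does not supply a proof of this theorem at all: immediately after the statement it writes ``For a proof of Theorem \ref{kahler en terminos de alg Lie} we refer to \cite{B06}.'' So there is no paper proof to compare against; your proposal fills in precisely what the cited source contains, namely the reduction to the base point by $G$-invariance, the eigenspace decomposition $\fG_\C/\fK_\C=(\fP/\fK_\C)\oplus(\overline{\fP}/\fK_\C)$ for the complexified $J_p$, and the two short bilinear-form calculations yielding $J$-invariance $\Leftrightarrow s(\fP\times\fP)=0$ and positivity $\Leftrightarrow -is(a,\overline a)>0$.

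One small remark: your caveat about the sign convention is well placed, since Remark~\ref{complex} in the paper only asserts a bijection without fixing which eigenspace corresponds to $\fP/\fK_\C$; the cited reference \cite{B06} makes the choice consistent with the sign in \eqref{com pol}, and your acknowledgment that the opposite convention would flip the inequality is exactly the right way to handle this.
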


A complemented subalgebra $\fP$ of $\fG_\C$ satisfying the conditions in \eqref{complex and p} and \eqref{com pol} is called a \textit{weakly K\"ahler polarization} of $\fG$ in $s$. If the last condition in the previous theorem is also satisfied, then it is called a \textit{strongly K\"ahler polarization} of $\fG$ in $s$. For a proof of Theorem \ref{kahler en terminos de alg Lie} we refer to \cite{B06}.

\subsection*{Acknowledgment}
This research was partially supported by  CONICET (PIP 2021/2023 11220200103209CO), ANPCyT (2015 1505/ 2017 0883) and FCE-UNLP (11X974).


\addcontentsline{toc}{section}{References}

{\small

}

\bigskip

{\sc (Claudia D. Alvarado)} {Departamento de  Matem\'atica \& Centro de Matem\'atica La Plata, FCE-UNLP, Calles 50 y 115, 
(1900) La Plata, Argentina  and Instituto Argentino de Mate\-m\'atica, `Alberto P. Calder\'on', CONICET, Saavedra 15 3er. piso,
(1083) Buenos Aires, Argentina.}

\noi e-mail: {\sf calvarado@mate.unlp.edu.ar}


\bigskip

{\sc (Eduardo Chiumiento)} {Departamento de  Matem\'atica \& Centro de Matem\'atica La Plata, FCE-UNLP, Calles 50 y 115, 
(1900) La Plata, Argentina  and Instituto Argentino de Matem\'atica, `Alberto P. Calder\'on', CONICET, Saavedra 15 3er. piso,
(1083) Buenos Aires, Argentina.}     
                                               
\noi e-mail: {\sf eduardo@mate.unlp.edu.ar}


%
%

\end{document}